\newcommand{\Poi}{\mathrm{Poi}}
\newcommand{\indic}{\mathds{1}}
\newcommand{\Var}{\mathrm{Var}}
\newcommand{\EE}{\mathbb{E}}
\newcommand{\PP}{\mathbb{P}}
\DeclareMathOperator*{\Exp}{\EE}
\newcommand{\Paren}[1]{\left(#1\right)}
\newcommand{\Abs}[1]{\left|#1\right|}
\newcommand{\Br}[1]{\left[#1\right]}
\newcommand{\Brace}[1]{\left\{#1\right\}}
\newtheorem{Theorem}{Theorem}
\newtheorem{Corollary}{Corollary}
\newtheorem{Lemma}{Lemma}
\newtheorem{definition}{Definition}
\DeclareMathOperator{\sign}{sign}
\title{
Data Amplification: Instance-Optimal Property Estimation 
}
\author{Yi Hao}
\author{Alon Orlitsky}
\affil{\{yih179, alon\} @eng.ucsd.edu \\University of California, San Diego}
\begin{document}
\begin{titlepage}
\clearpage\maketitle
\thispagestyle{empty}
The best-known and most commonly used distribution-property estimation
technique uses a plug-in estimator, with empirical frequency replacing
the underlying distribution. We present novel linear-time-computable
estimators that significantly ``amplify'' the effective amount of data available. 
For a large variety of distribution properties including four of the most popular ones and for every
underlying distribution, 
they achieve the accuracy that the empirical-frequency plug-in
estimators would attain using a logarithmic-factor more samples.

Specifically, for Shannon entropy and a very broad class of properties including $\ell_1$-distance, the new estimators use $n$ samples to achieve the accuracy attained
by the empirical estimators with $n\log n$ samples. For support-size
and coverage, the new estimators use $n$ samples to achieve the
performance of empirical frequency with sample size
$n$ times the logarithm of the property value.
Significantly strengthening the traditional min-max formulation, these results hold
not only for the worst distributions, but for each and every underlying
distribution. Furthermore, the logarithmic amplification factors are optimal.
Experiments on a wide variety of distributions show
that the new estimators outperform the previous state-of-the-art estimators
designed for each specific property. 
\end{titlepage}
\section{Introduction}

Recent years have seen significant interest in estimating properties of
discrete distributions over large domains~\cite{ventro,mmentro,mmcover,pnas,mmsize,jvhw}. Chief among these properties
are support size and coverage, Shannon entropy, and $\ell_1$-distance to a known distribution. 
The main achievement of these papers is essentially estimating several 
properties of distributions with alphabet size $k$ using just $k/\log k$
samples. 

In practice however, the underlying distributions are often simple,
and their properties can be accurately estimated with significantly fewer than $k/\log k$ samples. For example, if the distribution is concentrated on a
small part of the domain, or is exponential, very few samples
may suffice to estimate the property. To address this discrepancy,
~\cite{nips} took the following competitive approach. 

The best-known distribution property estimator
is the \emph{empirical estimator} that replaces the unknown underlying distribution
by the observed empirical distribution. For example, with $n$ samples,
it estimates entropy by $\sum_i (N_i/n)\log(n/N_i)$ where $N_i$
is the number of times symbol $i$ appeared. Besides its simple and
intuitive form, the empirical estimator is also consistent, stable,
and universal. It is therefore the most commonly used property
estimator for data-science applications. 

The estimator derived in~\cite{nips} uses $n$ samples and for
any underlying distribution achieves the same performance that
the empirical estimator would achieve with $n\sqrt{\log n}$ samples. 
It therefore provides an effective way to \emph{amplify} the amount of data
available by a factor of $\sqrt{\log n}$, regardless of
the domain or structure of the underlying  distribution.

In this paper we present novel estimators that increase the
amplification factor for all sufficiently smooth properties including those mentioned above from $\sqrt{\log n}$
to the information-theoretic bound of $\log n$.
Namely, for \emph{every} distribution their expected estimation error
with $n$ samples is that of the empirical estimator with $n\log n$
samples and no further uniform amplification is possible. 

It can further be shown~\cite{ventro,mmentro,mmcover,jvhw} that
the empirical estimator estimates all of the above four properties
with linearly many samples, hence the
sample size required by the new estimators is always at most
the $k/\log k$ guaranteed by the state-of-the-art estimators.

The current formulation has several additional advantages
over previous approaches. 
\vspace*{-0.9em}
\paragraph{Fewer assumptions}
It eliminates the need for some commonly used assumptions.
For example, support size cannot be estimated with any number of
samples, as arbitrarily-many low-probabilities may be missed.
Hence previous research~\cite{mmsize,mmcover} unrealistically assumed prior knowledge of
the alphabet size $k$, and additionally that all positive
probabilities exceed $1/k$. 
By contrast, the current formulation does not need these assumptions.
Intuitively, if a symbol's probability is so small that it won't be detected
even with $n\log n$ samples, we do not need to worry about it.
\vspace*{-0.9em}\paragraph{Refined bounds}
For some properties, our results are more refined
than previously shown. For example, existing results estimate
the support size to within $\pm\epsilon k$, rendering the
estimates rather inaccurate when the true support size $S$
is much smaller than $k$. 
By contrast, the new estimation errors are bounded by $\pm\epsilon S$,
and are therefore accurate regardless of the support size.
A similar improvement holds for support coverage.
\vspace*{-0.9em}
\paragraph{Graceful degradation}
For the previous results to work, one needs at
least $k/\log k$ samples. With fewer samples, the estimators
have no guarantees. By contrast, the guarantees of the new
estimators work for any sample size $n$. If $n<k/\log k$, the performance
may degrade, but will still track that of the empirical estimators
with a factor $\log n$ more samples.
\vspace*{-0.9em}
\paragraph{Instance optimality}
With the recent exception of~\cite{nips}, all 
modern property-estimation research took a min-max-related
approach, evaluating the estimation improvement based on the worst possible
distribution for the property. In reality, practical distributions 
are rarely the worst possible and often quite simple, rendering min-max approach
overly pessimistic, and its estimators, typically suboptimal in
practice. In fact, for this very reason, practical distribution
estimators do not use min-max based approaches~\cite{gs95}.
By contrast, our \emph{competitive}, or \emph{instance-optimal},
approach provably ensures amplification for every underlying distribution,
regardless of its complexity.

In addition, the proposed estimators run in time linear in the
sample size, and the constants involved are very small, properties
shared by some, though not all existing estimators. 

We formalize the foregoing discussion in the following definitions.

Let $\Delta_k$ denote the collection of discrete distributions over
$[k]:=\{1,\ldots, k\}$. A distribution \emph{property} is a mapping
$F:\Delta_k\to \mathbb{R}$.
It is \emph{additive} if it can be written as
\[
F(\vec{p}):=\sum_{i\in[k]} f_i(p_i),
\]
where $f_i:[0,1] \to \mathbb{R}$ are real functions. Many important distribution properties are additive:
\paragraph{Shannon entropy} $H(\vec{p}):=\sum_{i\in[k]}-p_i\log p_i$,
is the principal measure of information~\cite{info}, and arises in a
variety of machine-learning~\cite{chowliu,qkc,bg},
neuroscience~\cite{snm, mzfstj, vrlskb}, and other applications.
\paragraph{$\boldsymbol{\ell_1}$-distance}
$D_{\vec{q}}(\vec{p}):=\sum_{i\in[k]}|p_i-q_i|$, where $\vec{q}$ is a given distribution, is one of the most basic
and well-studied properties in the field of
distribution property testing~\cite{bfrsw, bffkrw, rd10, testingu}.
\paragraph{Support size} $S(\vec{p}):=\sum_{i\in[k]}\indic_{p_i>0}$, is a fundamental quantity for discrete distributions, and plays an important role in vocabulary size~\cite{vocabulary, te87, et76} and population estimation~\cite{ mcxlbg, population}.
\paragraph{Support coverage}
$C(\vec{p}):=\sum_{i\in[k]}(1-(1-p_i)^{m})$, for a given $m$, 
represents the number of distinct elements we would expect to see in $m$
independent samples, arises in many
ecological~\cite{ecological, ca84, ca92, ca14}, biological~\cite{klr99, ccover}, genomic~\cite{genomic} as well as database~\cite{database} studies. \\

Given an additive property $F$ and sample access to an
unknown distribution $\vec{p}$, we would like to estimate the value of
$F(\vec{p})$ as accurately as possible. Let $[k]^n$ denote
the collection of all length-$n$ sequences, an estimator is a function
$\hat{F}:[k]^n\to \mathbb{R}$ that maps a sample sequence $X^n\sim
\vec{p}$ to a property estimate $\hat{F}(X^n)$.  We evaluate the
performance of $\hat{F}$ in estimating $F(\vec{p})$ via its
\emph{mean absolute error} (MAE),
\[
L(\hat{F},\vec{p}, n):=\Exp_{X^n\sim \vec{p}}\Abs{\hat{F}(X^n)-F(\vec{p})}.
\]
Since we do not know $\vec{p}$, the common approach is to consider the worst-case MAE of $\hat{F}$ over $\Delta_k$,
\[
L(\hat{F}, n):=\max_{\vec{p}\in \Delta_k}L(\hat{F},\vec{p}, n).
\]
The best-known and most commonly-used property estimator
is the \emph{empirical plug-in estimator}.
Upon observing $X^n$, let $N_i$ denote the number of times symbol
$i\in[k]$ appears in $X^n$.
The empirical estimator estimates $F(\vec{p})$ by
\[
\hat{F}^E(X^n):=\sum_{i\in[k]}f_i\Paren{\frac{N_i}{n}}.
\]
Starting with Shannon entropy, it has been
shown~\cite{mmentro} that for $n\ge k$, the worst-case MAE of
the empirical estimator $\hat{H}^E$ is
\begin{equation}
\label{eqn:emp_loss}
L(\hat{H}^E, n)=\Theta\Paren{{\frac{k}{n}}+\frac{\log k}{\sqrt{n}}}.
\end{equation}
On the other hand,~\cite{ventro,mmentro,mmcover,jvhw} showed that for
$n\ge k/{\log k}$, more sophisticated estimators
achieve the best min-max performance of \vspace{-0.3em}
\begin{equation}
\label{eqn:mm_loss}
L(n)
:=
\min_{\hat F}\max_{\vec{p}\in \Delta_k}L(\hat{F},\vec{p}, n)
=
\Theta\Paren{\frac{k}{n\log n}+\frac{\log k}{\sqrt{n}}}.
\end{equation}
Hence up to constant factors, for the ``worst'' distributions,
the  MAE of these estimators with $n$ samples equals that of the
empirical estimator with $n\log n$ samples.
A similar relation holds for the other three properties we consider.

However, the min-max formulation is pessimistic as it evaluates the estimator's
performance based on its MAE for the worst distributions. 
In many practical applications, the underlying distribution is fairly
simple and does not attain this worst-case loss, rather, a much
smaller MAE can be achieved. Several
recent works have therefore gone beyond worst-case analysis and
designed algorithms that perform well for all distributions,
not just those with the worst performance~\cite{compdist, instdist}.

For property estimation,~\cite{nips} designed an estimator $\hat{F}^A$
that for any underlying distribution uses $n$ samples to achieve the
performance of the $n\sqrt{\log n}$-sample empirical estimator, 
hence effectively multiplying the data size by a $\sqrt{\log n}$
\emph{amplification factor}.

\begin{Lemma}~\cite{nips}
For every property $F$ in a large class that includes the four
properties above, there is an absolute constant $c_F$ such that
for all distribution $\vec{p}$ and all $\varepsilon\leq 1$,
\[
L(\hat{F}^A,\vec{p}, n)
\le
L\Paren{\hat{F}^E,\vec{p}, \varepsilon n\sqrt{\log n}} + c_F\cdot \varepsilon.
\]
\end{Lemma}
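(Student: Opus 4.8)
The plan is to reduce, by additivity and Poissonization, to a per-symbol problem, and to build $\hat{F}^A$ as a deliberately low-variance estimator whose bias is pinned to that of the empirical estimator at the inflated sample size $m:=\varepsilon n\sqrt{\log n}$. First I would replace the fixed sample size by $\Poi(n)$, making the symbol counts $N_i$ independent $\Poi(np_i)$; the de-Poissonization coupling changes every MAE by $O(n^{-1/2})$, absorbed into $c_F\varepsilon$. For a sample size $\ell$ set $g^{(\ell)}_i(p):=\Exp_{N\sim\Poi(\ell p)}f_i\Paren{N/\ell}$, so the $\ell$-sample empirical estimator has bias $B_\ell(\vec p):=\sum_i\Paren{g^{(\ell)}_i(p_i)-f_i(p_i)}$, and observe the trivial bound $L(\hat{F}^E,\vec p,m)\ge\Abs{B_m(\vec p)}$. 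Hence it suffices to produce $\hat{F}^A$ with $\Abs{\Exp\hat{F}^A-F(\vec p)}\le\Abs{B_m(\vec p)}+O(\varepsilon)$ and $\Var(\hat{F}^A)=O(\varepsilon^2)$, for then $L(\hat{F}^A,\vec p,n)\le\Abs{\Exp\hat{F}^A-F(\vec p)}+\sqrt{\Var(\hat{F}^A)}\le\Abs{B_m(\vec p)}+O(\varepsilon)$.

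When $\varepsilon\sqrt{\log n}\le1$, that is $m\le n$, this is essentially free: take $\hat{F}^A=\hat{F}^E$ at scale $n$, which up to the $O(\varepsilon)$-small non-monotonicity of the empirical loss is at least as accurate as $\hat{F}^E$ at the smaller scale $m$. So the content is the regime $m>n$, where $n$ is large enough that $(\log n)^2/n=O(\varepsilon^2)$ past a threshold depending only on $\varepsilon$, the remaining narrow window of $n$ being handled crudely inside $c_F\varepsilon$. There I take $\hat{F}^A=\sum_i\hat{f}_i(N_i)$ symbol by symbol, with a sample-split randomized count threshold $\tau\asymp\log n$ so that the regime decision is independent of the estimate it selects. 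For a \emph{frequent} symbol ($N_i\ge\tau$, whence with overwhelming probability $p_i$ exceeds a constant times $\tau/n$) I take $\hat{f}_i(N_i)=f_i(N_i/n)$ plus a short bias-correction series matching $g^{(m)}_i(p_i)$ to the needed order; its variance is of order $(f_i'(p_i))^2p_i/n$, and since at most $n/\tau$ symbols have $p_i\ge\tau/n$ these variances sum to $O((\log n)^2/n)$ and the residual biases to $O(\varepsilon)$. For a \emph{rare} symbol ($N_i<\tau$, whence $p_i=O(\tau/n)$ and $mp_i=O((\log n)^{3/2})$) I replace $f_i(N_i/n)$ by a degree-$d$ polynomial estimator with $d\asymp\sqrt{\log n}$, implemented through the unbiased estimators $N_i^{\underline j}/n^j$ of $p_i^j$ and chosen as the best degree-$d$ approximant to $g^{(m)}_i$ on $[0,\tau/n]$.

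Combining, $\Exp\hat{F}^A-F(\vec p)$ equals $B_m(\vec p)$ plus the summed frequent- and rare-symbol residuals, hence lies within $O(\varepsilon)$ of $B_m(\vec p)$; and $\Var(\hat{F}^A)=O(\varepsilon^2)$, the frequent part by the counting bound above and the rare part because $d\asymp\sqrt{\log n}$ is chosen precisely so that the variance inflation of the $N_i^{\underline j}/n^j$'s -- a controlled power of $m/n=\varepsilon\sqrt{\log n}$ -- summed over $j\le d$ and then over the $k$ symbols, stays $O(\varepsilon^2)$. Feeding these into the inequality $L(\hat{F}^A,\vec p,n)\le\Abs{\Exp\hat{F}^A-F(\vec p)}+\sqrt{\Var(\hat{F}^A)}$ from the first paragraph and using $L(\hat{F}^E,\vec p,m)\ge\Abs{B_m(\vec p)}$ gives the claim, with $c_F$ collecting the absolute constants.

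The step I expect to be the main obstacle is exactly this last balance for the rare symbols: arranging a single degree scale so that the polynomial estimators simultaneously track $g^{(m)}_i$ with total bias $O(\varepsilon)$ and carry total variance $O(\varepsilon^2)$. This trade-off is what forces the degree to be $\Theta(\sqrt{\log n})$ -- a larger degree would match the bias of a larger $m$ but blow up the variance -- and it is why the amplification factor emerges as $\sqrt{\log n}$ rather than the information-theoretic $\log n$. Making it rigorous needs quantitative control of the Poisson--Charlier-type coefficients in the power series of $g^{(m)}_i$ together with the cancellation in that expansion, not a term-by-term estimate. A secondary point: for the non-smooth properties among the four (support size and coverage) $g^{(m)}_i$ is not analytic, so the polynomial step runs instead against a smoothed, truncated surrogate of $f_i$, with $\tau$ and $d$ adjusted to match.
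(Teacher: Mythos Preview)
The paper does not prove this lemma at all: it is stated with the citation~\cite{nips} and quoted as prior work, solely to motivate the present paper's improvement from $\sqrt{\log n}$ to $\log n$. There is therefore no ``paper's own proof'' to compare your proposal against.

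That said, your sketch is broadly in the spirit of the techniques the paper develops for its own theorems (Sections~\ref{Est_const}--\ref{Mean_H}): Poissonize, split symbols by a count threshold, use a low-degree polynomial unbiased estimator on the rare side and a bias-corrected empirical estimator on the frequent side, then combine a bias bound pinned to $B_m(\vec p)$ with a variance bound. Your identification of $d\asymp\sqrt{\log n}$ as the degree that balances approximation against variance inflation, yielding the $\sqrt{\log n}$ amplification, is the right heuristic for why the cited result stops where it does. One point to be careful about: the reduction ``$L(\hat F^E,\vec p,m)\ge |B_m(\vec p)|$'' is correct, but your bias claim must be that $\Exp\hat F^A-F(\vec p)=B_m(\vec p)+O(\varepsilon)$ (an equality up to $O(\varepsilon)$), not merely that the two absolute values are close; otherwise the final chain does not go through. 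Your proposal states this correctly in the ``Combining'' paragraph but the earlier ``it suffices'' line is slightly looser than what you actually use.
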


In this work, we fully strengthen the above result and establish 
the limits of data amplification for all sufficiently smooth 
additive properties including
four of the most important ones. 
Using Shannon entropy as an example, 
we achieve a $\log n$ amplification factor.
Equations~\eqref{eqn:emp_loss} and~\eqref{eqn:mm_loss} imply that 
the improvement over the empirical estimator cannot always 
exceed $\mathcal{O}(\log n)$, 
hence up to a constant, this amplification factor is
information-theoretically optimal. Similar optimality arguments hold for 
our results on the other three properties. 

Specifically, we derive linear-time-computable estimators
$\hat{H}$, $\hat{D}$, $\hat{S}$, $\hat{C}$, and $\hat{F}$ for Shannon entropy, $\ell_1$-distance, 
support size, support coverage, and a broad class of additive properties which we refer to as ``Lipschitz properties''.
These estimators take a single parameter $\varepsilon$, and given
samples  $X^n$, amplify the data as described below.

Let $a\land b:=\min\{a, b\}$ and abbreviate the support size $S({\vec{p}})$ by $S_{\vec{p}}$. For some absolute constant $c$, 
the following five theorems hold for all $\varepsilon\leq 1$, 
all distributions $\vec{p}$, and all $n\ge1$.
\begin{Theorem}[Shannon entropy]\label{thm1} 
\[
L(\hat{H},\vec{p}, n)\le L\Paren{\hat{H}^E,\vec{p}, {\varepsilon n\log n}}
+c\cdot  \Paren{\varepsilon \land \Paren{\frac{S_{\vec{p}}}{n}+\frac{1}{n^{0.49}}}}.
\]
\end{Theorem}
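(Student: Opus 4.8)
The plan is to combine Poissonization with a reduction to a ``smoothed'' additive property, and then estimate that property with a polynomial-approximation estimator whose degree grows like $\log n$. First, Poissonize: replace the sample size $n$ by $\Poi(n)$ so that the counts $N_i\sim\Poi(np_i)$ become independent; standard Poissonization arguments change the MAE only by lower-order terms. Writing $m:=\varepsilon n\log n$ and $f(x):=-x\log x$, introduce the \emph{smoothed entropy}
\[
H_m(\vec p):=\Exp_{Y^m\sim\vec p}\big[\hat H^E(Y^m)\big]=\sum_{i\in[k]}g_m(p_i),\qquad g_m(p):=\Exp_{N'\sim\Poi(mp)}\!\big[f(N'/m)\big],
\]
which is again additive with the \emph{same} per-coordinate function $g_m$ for every $i$. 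By Jensen's inequality $|H_m(\vec p)-H(\vec p)|=\big|\Exp\,\hat H^E(Y^m)-H(\vec p)\big|\le L(\hat H^E,\vec p,m)$, so by the triangle inequality it suffices to build an estimator $\hat H$ with $\Exp_{X^n\sim\vec p}|\hat H(X^n)-H_m(\vec p)|\le c\big(\varepsilon\land(S_{\vec p}/n+n^{-0.49})\big)$.

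Next, take $\hat H(X^n)=\sum_{i\in[k]}\hat g(N_i)$ for a single coefficient sequence $\hat g:\NN\to\mathbb{R}$ defined piecewise. It helps to read $g_m$ through $\psi(\lambda):=g_m(\lambda/n)=\Exp_{N'\sim\Poi(R\lambda)}[f(N'/m)]$, where $R:=m/n=\varepsilon\log n$ is the amplification ratio: from a single $\Poi(\lambda_i)$ observation we must ``extrapolate'' this Poisson functional from rate $\lambda_i$ to rate $R\lambda_i$. For a count $N_i$ below a threshold $\tau$ of order $\log n$, approximate $\psi$ on $[0,\tau]$ by a careful degree-$L$ polynomial approximation $\sum_{j\le L}a_j\lambda^j$ with $L$ of order $\log n$, and estimate it by $\sum_{j\le L}a_j\,N_i^{\underline{j}}$, using that the falling factorial $N^{\underline{j}}$ is the unbiased estimator of $\lambda^j$ under $\Poi(\lambda)$. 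For $N_i$ above $\tau$, where $\lambda_i$ concentrates, use the plug-in value $g_m(N_i/n)$ with a standard low-order bias correction. We must set $\hat g(0)=g_m(0)=0$, so zero-probability symbols contribute nothing and $\hat H$ is computable in linear time; a separate, milder branch (for example a subsampled, bias-corrected plug-in for $g_m$) handles the regime of small $\varepsilon$, where amplification is mild, and produces the $\land\,\varepsilon$ alternative.

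Then bound bias and fluctuation separately. The bias is $\sum_i|\Exp\hat g(N_i)-g_m(p_i)|$: for large $\lambda_i$ it is negligible by concentration of $N_i$ together with smoothness of $g_m$ away from $0$; for small $\lambda_i$ it is the polynomial-approximation error of $\psi$ on $[0,\tau]$, where the key analytic input is that $g_m$ is a real-analytic, $1/m$-scale smoothing of $-p\log p$, so that a suitable degree-$L$ approximant with $L$ of order $\log n$ keeps each per-symbol error at most a constant times $1/n$ (and exactly $0$ at $p_i=0$). For the fluctuation, use $\Exp|\hat H-\Exp\hat H|\le\sqrt{\Var(\hat H)}=\big(\sum_i\Var(\hat g(N_i))\big)^{1/2}$ and bound each $\Var(\hat g(N_i))$ by sharp estimates on the variance of a falling-factorial polynomial under $\Poi(\lambda_i)$ in terms of the Chebyshev coefficient norm of $\sum_j a_j\lambda^j$ and of $\lambda_i,\tau,L$; this variance again vanishes at $p_i=0$. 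Since only $S_{\vec p}$ symbols carry positive probability, summing their per-symbol bias and standard-deviation contributions produces the $S_{\vec p}/n$ term, while a single global term of size $n^{-0.49}$ absorbs the $n^{o(1)}$ overhead inherent to a degree-$\Theta(\log n)$ estimator; a coarser accounting of the same bias and variance, using $\sum_i p_i=1$ and the ratio $R=\varepsilon\log n$, gives the complementary bound $c\varepsilon$. Assembling, $L(\hat H,\vec p,n)\le(\mathrm{bias})+\sqrt{\Var(\hat H)}+|H_m(\vec p)-H(\vec p)|\le c\big(\varepsilon\land(S_{\vec p}/n+n^{-0.49})\big)+L(\hat H^E,\vec p,\varepsilon n\log n)$, as claimed.

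The main obstacle is the variance of the degree-$\Theta(\log n)$ polynomial estimator: naive coefficient bounds introduce factors like $\tau^L$ or $2^{2L}$, which are super-polynomial once $L$ is of order $\log n$, so the threshold $\tau$, the degree $L$, and the particular (Chebyshev-based) approximant must be tuned jointly, with the exponent $0.49$ rather than $1/2$ being exactly the room left to swallow the resulting $n^{o(1)}$ loss. A second subtlety is instance-optimality: forcing the error to scale with the true support $S_{\vec p}$ rather than with the alphabet size $k$ (as in min-max analyses) requires every per-symbol bias and variance term to vanish at $p_i=0$ and to be $O(1/n)$ at each positive $p_i$, leaving only one global $n^{-0.49}$ correction on top.
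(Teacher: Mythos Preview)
Your high-level plan matches the paper's almost step for step: Poissonize, target the smoothed quantity $H_m(\vec p)=\sum_i B_{na}(h,p_i)$ (the paper writes it with the binomial Bernstein polynomial rather than your Poisson version, a cosmetic difference), split symbols by count into small and large, use an unbiased degree-$\Theta(\log n)$ falling-factorial estimator of a polynomial proxy for $g_m$ on the small side and plug-in on the large side, and bound bias plus root-variance. One architectural detail you omit is that the paper uses a \emph{second} independent Poisson sample to do the small/large classification (thresholding $N_i'$ at $\varepsilon^{-1}$) in addition to the $N_i\le c_l\log n$ truncation; this decouples the classification from the estimation and is what lets the per-symbol bias bound $\mathcal{O}(p_i\varepsilon)$ be converted to $\mathcal{O}(1/n)$ per positive symbol, giving the $S_{\vec p}/n$ alternative.

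The real gap is the sentence ``the key analytic input is that $g_m$ is a real-analytic, $1/m$-scale smoothing of $-p\log p$, so that a suitable degree-$L$ approximant with $L$ of order $\log n$ keeps each per-symbol error at most a constant times $1/n$.'' This is precisely the hard part, and real-analyticity alone does not deliver it. The paper's weaker Theorem~6 uses only the min-max polynomial of $h$ itself and gets bias $1+\mathcal{O}(\varepsilon)$, not $\mathcal{O}(\varepsilon)$; upgrading to the full Theorem~1 is the content of Appendix~A. There the target is not $g_m$ but the \emph{derivative} $B_{na}'(h,x)$, which is rewritten (Lemma~9) via two auxiliary functions $f_1(z)=-e^{-z}\sum_{j\ge1}\frac{z^j}{j!}j\log j$ and its shift $f_2$. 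The crucial properties are that $f_1''$ is bounded and that $|f_1''(z)/h''(z)|\le\log 4$ (their Properties~3 and~4, proved via the elementary inequality $1/(t+1)\le u(t)\le(\log 4)/(t+1)$). These second-derivative bounds are exactly what feeds into the Ditzian--Totik modulus-of-smoothness machinery (Lemma~10) to produce a degree-$d$ polynomial with $\sup_{I_n}|B_{na}'(h,x)-\tilde h^*(x)|\le\mathcal{O}(\varepsilon)$, which after integration gives $|B_{na}(h,x)-\tilde H^*(x)|\le\mathcal{O}(x\varepsilon)$ and hence total bias $\mathcal{O}(\varepsilon)$. Your proposal identifies the right object to approximate but does not supply (or even signal) this analysis, and without it the bias bound stalls at $1+\mathcal{O}(\varepsilon)$, i.e.\ Theorem~6 rather than Theorem~1.
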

Note that the estimator does not need to know $S_{\vec{p}}$ or $k$. When $\varepsilon=1$, the estimator amplifies the data by a factor 
of $\log n$. As $\varepsilon$ decreases, the amplification factor decreases,
and so does the extra additive inaccuracy. One can also set $\varepsilon$ to be 
a vanishing function of $n$, e.g., $\varepsilon=1/\log\log n$. This result may be interpreted
 as follows. For distributions with large support sizes such that the min-max estimators provide no or only very weak guarantees, our estimator with $n$ samples always tracks the performance of the $n\log n$-sample empirical estimator. On the other hand, for distributions with relatively small support sizes, our estimator achieves a near-optimal $\mathcal{O}(S_{\vec{p}}/ n)$-error rate.
 
 In addition, the above result together with Proposition 1 in~\cite{pentro} trivially implies that 
 \begin{Corollary}
 In the large alphabet regime where $n=o(k/\log k)$, the min-max MAE of estimating Shannon entropy satisfies 
 \[
 L(n)\leq(1+o(1))\log\Paren{1+\frac{k-1}{n\log n}}.
 \]  
 \end{Corollary}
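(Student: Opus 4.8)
The plan is to obtain the corollary by instantiating Theorem~\ref{thm1} at a fixed parameter and then substituting the sharp worst-case bound for the empirical entropy estimator furnished by Proposition~1 of~\cite{pentro}. First I would take $\varepsilon=1$ in Theorem~\ref{thm1}. Since $\varepsilon\land\Paren{S_{\vec p}/n+1/n^{0.49}}\le1$ for every $\vec p$, this gives, uniformly over $\vec p\in\Delta_k$,
\[
L(\hat H,\vec p,n)\le L\Paren{\hat H^E,\vec p,\,n\log n}+c .
\]
It is essential here to retain the $\varepsilon$-branch of the minimum rather than the $S_{\vec p}/n$ term: for spread-out distributions $S_{\vec p}/n$ can be of order $k/n$, which in the regime $n=o(k/\log k)$ far exceeds the quantity we are aiming for. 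Taking the maximum over $\vec p\in\Delta_k$ and using $L(n)=\min_{\hat F}L(\hat F,n)\le L(\hat H,n)$, I get $L(n)\le L(\hat H^E,\,n\log n)+c$.

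Next I would invoke Proposition~1 of~\cite{pentro}, which bounds the worst-case MAE of the $m$-sample empirical entropy estimator over $\Delta_k$ by $(1+o(1))\log\Paren{1+\tfrac{k-1}{m}}$ whenever $k/m\to\infty$. The only thing to check is that our hypothesis legitimately feeds this proposition: from $n=o(k/\log k)$ we have $\log n\le\log k$, hence $n\log n\le n\log k=o(k)$, so $k/(n\log n)\to\infty$ and the proposition applies with $m=n\log n$, giving
\[
L\Paren{\hat H^E,\,n\log n}\le(1+o(1))\log\Paren{1+\frac{k-1}{n\log n}} .
\]

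Finally, since $n\log n=o(k)$ the ratio $\tfrac{k-1}{n\log n}$ tends to infinity, so $\log\Paren{1+\tfrac{k-1}{n\log n}}\to\infty$; the additive constant $c$ from the first step is therefore of strictly lower order and is absorbed into the $(1+o(1))$ factor, which is exactly the asserted bound $L(n)\le(1+o(1))\log\Paren{1+\tfrac{k-1}{n\log n}}$. As the word ``trivially'' in the statement signals, there is no genuine obstacle; the two points that deserve a line of care are (i) checking that $n=o(k/\log k)$ pushes $m=n\log n$ into the range where Proposition~1 of~\cite{pentro} is asymptotically tight, and (ii) observing that the divergence of the main term is what allows us to discard the constant $c$ --- and, had we instead used a slowly vanishing $\varepsilon=\varepsilon_n\to0$, would also absorb the extra $\log(1/\varepsilon)$ overhead from shrinking the empirical sample size from $n\log n$ to $\varepsilon n\log n$.
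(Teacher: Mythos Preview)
Your proposal is correct and matches the paper's own justification, which simply says the corollary follows ``trivially'' from Theorem~\ref{thm1} together with Proposition~1 of~\cite{pentro}. You have supplied exactly the two routine verifications the paper omits: that $n=o(k/\log k)$ forces $n\log n=o(k)$ so Proposition~1 applies at sample size $n\log n$, and that the divergence of $\log\Paren{1+\tfrac{k-1}{n\log n}}$ absorbs the additive constant $c$ into the $(1+o(1))$ factor.
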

 \pagebreak
 
Similarly, for $\ell_1$-distance,

\begin{Theorem}[$\ell_1$-distance]\label{thm2.1} 
For any $\vec{q}$, we can construct an estimator $\hat{D}$ for $D_{\vec{q}}$ such that
\[
L(\hat{D},\vec{p}, n)
\le
L\Paren{\hat{D}^E,\vec{p}, \varepsilon^2 {n\log n}}+c\cdot \Paren{\varepsilon\land \Paren{\sqrt{\frac{S_{\vec{p}}}{n}}+\frac{1}{n^{0.49}}}}. 
\]
\end{Theorem}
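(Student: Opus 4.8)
The plan is to reduce the $\ell_1$-distance estimation problem to the entropy-type machinery developed for Theorem~\ref{thm1}, by treating $D_{\vec q}(\vec p)=\sum_i |p_i-q_i|$ as an additive property with coordinate functions $f_i(x)=|x-q_i|$. The key structural observation is that $|x-q_i|$ is $1$-Lipschitz but non-smooth exactly at the single point $x=q_i$; away from that point it is linear, hence trivially well-approximated by low-degree polynomials. So I would split each coordinate into a ``bulk'' regime, where $p_i$ is far from $q_i$ on the scale of the sampling resolution, and a ``critical'' regime, where $p_i\approx q_i$. In the bulk regime the empirical plug-in is already essentially unbiased and the polynomial-approximation correction is negligible; the entire difficulty is concentrated near $x=q_i$, where we must approximate $|x-q_i|$ by polynomials of degree $\Theta(\log n)$ on an interval of length $\Theta((\log n)/n)$ around $q_i$, exactly as in the entropy construction for $-x\log x$ near $0$.

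Concretely, I would build $\hat D$ in the same two-region fashion used to prove Theorem~\ref{thm1}: for symbols whose empirical count $N_i$ is large (say $N_i\gtrsim \log n$), use the empirical value $|N_i/n-q_i|$ directly; for symbols with small $N_i$, replace $|p_i-q_i|$ by an unbiased estimate of the best degree-$L$ polynomial approximation of $|x-q_i|$ on the relevant small interval, with $L=\Theta(\log n)$, using the standard falling-factorial / Poissonized unbiased-estimator trick. The bias of this scheme is controlled by the uniform polynomial approximation error of $|x-q_i|$ by degree-$L$ polynomials on an interval of width $w$, which by classical results (the Bernstein constant for $|x|$) is $\Theta(w/L)$; taking $w=\Theta((\log n)/n)$ and $L=\Theta(\log n)$ gives bias $O(1/n)$ per symbol and hence the claimed amplification. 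The variance is handled by the same concentration estimates as in the entropy case; the $\sqrt{S_{\vec p}/n}$ term (rather than $S_{\vec p}/n$) appears because $\ell_1$-distance, unlike entropy, does not have an extra factor of $x$ damping the small-probability contributions — each of the $S_{\vec p}$ support symbols can contribute an $\ell_1$ error of order $\sqrt{p_i/n}$ to the empirical estimator, and Cauchy--Schwarz over the support yields $\sqrt{S_{\vec p}/n}$.

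The comparison to $L(\hat D^E,\vec p,\varepsilon^2 n\log n)$ rather than $L(\hat D^E,\vec p,\varepsilon n\log n)$ reflects the $\sqrt{\cdot}$ scaling of $\ell_1$ errors: to make the additive slack $\varepsilon$ one must match the empirical estimator run with roughly $n(\log n)/\varepsilon^2$ samples, since the empirical $\ell_1$ error scales like $1/\sqrt{\text{samples}}$ along the critical coordinates. I would make this precise by the same device as in~\cite{nips}: show that $\hat D$ with $n$ samples has MAE bounded by (i) the MAE of $\hat D^E$ with $m:=\varepsilon^2 n\log n$ samples plus (ii) a term that is $O(\varepsilon)$, and separately that this term is also $O(\sqrt{S_{\vec p}/n}+n^{-0.49})$, so that the minimum of the two bounds is valid. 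Step (ii)'s second bound comes from running the above analysis with the trivial choice $L=O(1)$ (empirical-only) whenever $S_{\vec p}$ is small, giving the $\sqrt{S_{\vec p}/n}$ rate, together with the $n^{-0.49}$ slack absorbing lower-order Poissonization and tail errors.

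The main obstacle I anticipate is handling the dependence of the critical interval, the polynomial degree, and the threshold on the \emph{known} values $q_i$ simultaneously and uniformly in $\vec p$: because $|x-q_i|$ has its kink at a location that varies with $i$, one cannot use a single global polynomial as in the entropy case but must patch together per-symbol approximations, and one must verify that the resulting estimator is still linear-time computable and that the per-symbol biases and variances sum correctly without the $q_i$'s near $1$ or near the empirical resolution causing blow-ups. I expect the cleanest route is to split $[0,1]$ for each $i$ into $\{x : |x-q_i| \le c(\log n)/n\}$ and its complement, approximate $|x-q_i|$ on the former by a shifted/scaled version of the best polynomial approximant of $|y|$ on $[-1,1]$, and on the latter use the exact linear formula (which the empirical estimator already computes without bias); the bookkeeping that this yields total bias $O(1/n)$ and variance giving the stated rate is the part that requires care but no new ideas beyond those already needed for Theorem~\ref{thm1}.
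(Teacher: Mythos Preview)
Your plan is in the right spirit but diverges from the paper's argument in a way that leaves a genuine gap. The paper does \emph{not} approximate $|x-q_i|$ itself, nor does it center any approximation interval at $q_i$. It keeps the entropy architecture intact: the small/large split is by count (exactly as in Section~\ref{Est_const}), the approximation interval is the fixed $I_n=[0,c_l(\log n)/n]$ near zero, and the object being approximated is the \emph{derivative of the Bernstein polynomial} $B_{na}'(\ell_q,x)$. Via the Poisson limit (Lemma~\ref{app_gz1}) one writes $B_{na}'(f,x)\approx t_{na}(z):=f_{1,na}(z)-f_{2,na}(z)$ with $z=(na{-}1)x$, and the only $\ell_1$-specific step is the observation that the second differences $r_{na}(j)=g_{na}(j{+}2)+g_{na}(j)-2g_{na}(j{+}1)$ vanish for all but at most two indices $j$, because $\ell_q$ is piecewise linear with a single kink. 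Lemma~\ref{boundl} then gives $|t_{na}''(z)|\le O(1)$ and $|t_{na}''(z)|\le O(1)/z$ directly; these are exactly the analogues of Properties~3 and~4 in Section~\ref{prop_g}, and from there the moduli-of-smoothness argument of Section~\ref{app_f1} and the construction of Section~\ref{bias_correction} run unchanged (with the large-probability piece borrowed from~\cite{nips}).

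The gap in your route is twofold. First, by approximating $|x-q_i|$ rather than $B_{na}(\ell_{q_i},\cdot)$, your estimator's bias has no built-in relation to the $na$-sample empirical bias $\sum_i\bigl(B_{na}(\ell_{q_i},p_i)-\ell_{q_i}(p_i)\bigr)$; you would obtain a small \emph{absolute} bias, not a bias that tracks $\hat D^E$ with $na$ samples instance by instance, and that tracking is exactly what the competitive inequality $L(\hat D,\vec p,n)\le L(\hat D^E,\vec p,\varepsilon^2 n\log n)+c\varepsilon$ demands. Second, and more concretely, your ``$O(1/n)$ per symbol'' from the Bernstein constant is a \emph{uniform} approximation error, so summing over the small-probability symbols yields $O(S_{\vec p}/n)$, not $O(\varepsilon)$; when $S_{\vec p}$ is of order $n$ this is $O(1)$ and the $\varepsilon$-side of the minimum in the theorem fails. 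The paper avoids this by bounding the \emph{derivative} error by $O(\varepsilon)$ uniformly on $I_n$ and then integrating over $[0,p_i]$, which produces the $p_i$-weighted per-symbol bound $O(p_i\varepsilon)$ that sums to $O(\varepsilon)$ regardless of $S_{\vec p}$. Your final paragraph also conflates two different splits (by count versus by distance to $q_i$); the paper uses only the former and never centers anything at the kink.
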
 
Besides having an interpretation similar to Theorem~\ref{thm1}, the above result shows that for each $\vec{q}$ and each $\vec{p}$, we can use just $n$ samples to achieve the performance of the $n\log n$-sample empirical estimator.
More generally, for any additive property $F(\vec{p})=\sum_{i\in[k]} f_i(p_i)$ that satisfies the simple condition: $f_i$ is $\mathcal{O}(1)$-Lipschitz, for all $i$,
\begin{Theorem}[General additive properties]\label{thm2.2} 
Given $F$, we can construct an estimator $\hat{F}$ such that
\[
L(\hat{F},\vec{p}, n)
\le
L\Paren{\hat{F}^E,\vec{p}, \varepsilon^2 {n\log n}}+\mathcal{O}\Paren{\varepsilon\land \Paren{\sqrt{\frac{S_{\vec{p}}}{n}}+\frac{1}{n^{0.49}}}}. 
\]
\end{Theorem}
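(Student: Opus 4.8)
The plan is to reduce the theorem to a \emph{Poisson amplification} step for a single symbol. First Poissonize: take $N\sim\Poi(n)$ samples, so the symbol counts $N_i$ are independent $\Poi(np_i)$; concentration of $N$ about $n$ perturbs every MAE by $\mathcal O(n^{-0.49})$, which the stated bound budgets for. Write $m:=\varepsilon^2 n\log n$ and, for each $i$, let $g_i(p):=\Exp_{Y\sim\Poi(mp)}\Br{f_i(Y/m)}$, the expectation of the $m$-sample plug-in on symbol $i$. The estimator will be additive, $\hat F(X^n)=\sum_{i\in[k]}\hat g_i(N_i)$, with each $\hat g_i$ built so that $\Exp_{N_i\sim\Poi(np_i)}\Br{\hat g_i(N_i)}\approx g_i(p_i)$. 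Granting this, the triangle inequality gives
\[
L(\hat F,\vec p,n)\ \le\ \Exp\Abs{\hat F-\sum_i g_i(p_i)}\ +\ \Abs{\sum_i g_i(p_i)-F(\vec p)},
\]
and because $\sum_i g_i(p_i)$ is exactly the expectation of the plug-in estimator at sample size $m$, the second term is the absolute bias of the $m$-sample empirical estimator, hence at most $L(\hat F^E,\vec p,m)=L(\hat F^E,\vec p,\varepsilon^2 n\log n)$. So it remains to bound $\Exp\Abs{\hat F-\sum_i g_i(p_i)}\le|\mathrm{bias}|+\sqrt{\Var(\hat F)}$ by $\mathcal O\Paren{\varepsilon\land\Paren{\sqrt{S_{\vec p}/n}+n^{-0.49}}}$.

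When $m\le n$ (small $\varepsilon$) this is easy: thin each count, keeping every occurrence independently with probability $m/n$, so the thinned counts are exactly $\Poi(mp_i)$; setting $\hat g_i(N_i):=f_i((\text{thinned }N_i)/m)$ makes $\hat F$ equal in distribution to the Poissonized $m$-sample empirical estimator, so $\Exp|\hat F-\sum_i g_i(p_i)|=0$ up to the Poissonization error. The substance is the regime $m>n$, i.e.\ $m/n=\varepsilon^2\log n\in(1,\log n]$. Here I would split on the count. On \emph{heavy} symbols ($np_i$ above a threshold of order $\log n$), the plug-in $\hat g_i(N_i):=f_i(N_i/n)$ concentrates: $N_i/n=(1\pm o(1))p_i$ outside a polynomially small event, so the $\mathcal O(1)$-Lipschitz hypothesis gives $|\hat g_i(N_i)-g_i(p_i)|=\mathcal O(|N_i/n-p_i|)$, no worse than the $m$-sample plug-in. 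On the remaining symbols I would amplify: on a probability window $[0,\tau]$ around the plausible values, subtract the known constant $g_i(0)=f_i(0)$, approximate the remainder of $g_i$ by a polynomial $P_i$ of a carefully chosen degree $L$ with $P_i(0)=0$, and turn $P_i$ into an estimator via the identity $\Exp_{X\sim\Poi(np)}\Br{\binom{X}{r}}=(np)^r/r!$ --- replacing each monomial $(np_i)^r/r!$ by the unbiased statistic $\binom{N_i}{r}$. This gives $\hat g_i(N_i)=f_i(0)+\sum_{r\le L}c_{i,r}\binom{N_i}{r}$ with $|c_{i,r}|=\mathcal O((m/n)^r)$ up to bounded factors, whose expectation is $P_i(p_i)$; finally clip $\hat g_i(N_i)$ to a bounded range. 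The window $\tau$, the degree $L$, and the clipping radius are all tuned to $\varepsilon$ and $n$.

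The analysis then has two pieces. The bias of $\hat g_i$ against $g_i(p_i)$ is the polynomial-approximation error of $g_i$ on $[0,\tau]$; since $g_i$ is the Poisson smoothing of a Lipschitz function at scale $m$ it is analytic with controlled derivatives, and a Jackson/Chebyshev estimate bounds this error. Because $P_i(0)=0$, symbols with $p_i=0$ contribute zero bias, so the total bias is borne by the $\le S_{\vec p}$ positive-probability symbols; a uniform per-symbol bound gives total bias $\mathcal O(S_{\vec p}/n)\le\mathcal O(\sqrt{S_{\vec p}/n})$, and a refined bound together with $\sum_i p_i=1$ gives $\mathcal O(\varepsilon)$. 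For the variance, $\Var(\hat F)=\sum_i\Var(\hat g_i(N_i))$; a delicate second-moment computation --- using that the clipping activates only on a super-polynomially unlikely deviation event and that the polynomial estimator has small oscillation over the typical range of $N_i$ --- keeps each $\Var(\hat g_i(N_i))=\mathcal O(p_i/n)$ up to lower-order terms, so $\sqrt{\Var(\hat F)}=\mathcal O(n^{-1/2})=\mathcal O(\sqrt{S_{\vec p}/n})$. Combining, $\Exp|\hat F-\sum_i g_i(p_i)|$ is bounded simultaneously by $\mathcal O(\varepsilon)$ and by $\mathcal O(\sqrt{S_{\vec p}/n}+n^{-0.49})$, giving the stated minimum; the degenerate cases ($m<1$) are covered by the trivial bound.

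I expect the crux to be this joint control of bias and variance for the amplified single-symbol estimator. A larger degree $L$ shrinks the polynomial-approximation bias but inflates the variance through the $c_{i,r}\binom{N_i}{r}\sim(m/n)^r\binom{N_i}{r}$ terms, and there is only a narrow window of $(L,m)$ in which both stay small; pushing the amplified size $m$ past $n\log n$ makes the factor $(m/n)^L$ outrun the $1/r!$ damping and the variance explodes, which is precisely why no uniform amplification beyond $\log n$ is possible. The $\varepsilon^2$ factor (versus the $\varepsilon$ of Theorem~\ref{thm1}) is the cost of assuming only Lipschitzness of the $f_i$ rather than the additional structure available for Shannon entropy, and it forces a more conservative choice of $L$ relative to $\varepsilon$. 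A secondary obstacle is the bookkeeping at the boundary probability scale, where neither the heavy-symbol concentration argument nor the clean polynomial approximation is comfortable; I would glue the regimes with a smooth interpolation at a total cost of $\mathcal O(\varepsilon)$.
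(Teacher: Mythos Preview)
Your high-level plan matches the paper's: Poissonize, split symbols by a count threshold, use the plug-in on large counts, and on small counts build an unbiased polynomial estimator targeting the $na$-sample empirical expectation, then bound bias and variance separately. The crucial point you skate over, however, is the ``refined bound together with $\sum_i p_i=1$'' that is supposed to give total bias $\mathcal O(\varepsilon)$. For that you need per-symbol approximation error $\mathcal O(\varepsilon\cdot p_i)$, which is \emph{not} what a Jackson/Chebyshev estimate on $g_i$ gives you: uniform approximation of $g_i$ on $[0,\tau]$ yields a per-symbol error independent of $p_i$, and summing over $S_{\vec p}$ symbols does not produce $\mathcal O(\varepsilon)$. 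The paper obtains the pointwise $\mathcal O(\varepsilon\cdot p_i)$ by instead approximating the \emph{derivative} $B'_{na}(f,\cdot)$ uniformly to $\mathcal O(\varepsilon)$ and then integrating; and for a merely Lipschitz $f_i$ this derivative-approximation step is exactly the hard part.

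Concretely, writing $t_{na}(z)$ for the Poissonized first difference (so that $B'_{na}(f,x)=t_{na}((na-1)x)+\tilde{\mathcal O}(1/n)$), the Ditzian--Totik bound on the best degree-$d$ approximant of $t_{na}$ needs both $|t''_{na}(z)|\le\mathcal O(1)$ and, critically, $|z\,t''_{na}(z)|\le\mathcal O(1)$. The first follows easily from Lipschitzness; the second does not, since $t''_{na}(z)=e^{-z}\sum_{j\ge 0} r_j\bigl(z^j/j!-z^{j+1}/(j+1)!\bigr)$ with $r_j$ the second differences of $(na)f_i(\cdot/na)$, and the naive bound $|r_j|\le 2$ gives only $|z\,t''_{na}(z)|\le\mathcal O(z)$. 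The paper's device is to observe that consecutive partial sums $\sum_{j=n_1}^{n_2} r_j$ telescope to $\mathcal O(1)$, that the weight sequence $a_j=e^{-z}(z^{j+1}/j!-z^{j+2}/(j+1)!)$ is uniformly $\mathcal O(1)$ and changes monotonicity at most twice, and then to apply Abel's inequality on each monotone piece. Nothing in your sketch supplies this; the hand-wave ``$g_i$ is analytic with controlled derivatives'' is not enough, because the crude bound $|g_i''|\le 2m$ fed into Jackson yields an approximation error of order $\varepsilon^2\log n$ rather than $\mathcal O(\varepsilon)$. Your claimed coefficient size $|c_{i,r}|=\mathcal O((m/n)^r)$ is also off: the min-max polynomial on $[0,\tau]$ with $\tau\asymp(\log n)/n$ has coefficients of order $(n/\log n)^{r}$ (up to $2^{\Theta(d)}$ factors), and it is this scaling that drives the variance computation, not $m/n$.
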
 
We refer to the above general distribution property class as the class of ``Lipschitz properties''. Note that the $\ell_1$-distance $D_{\vec{q}}$, for any $\vec{q}$, clearly belongs to this class. 

Lipschitz properties are essentially bounded by absolute constants and Shannon entropy
grows at most logarithmically in the support size, and we were able
to approximate all with just an additive error. Support size
and support coverage can grow linearly in $k$ and $m$, and can be approximated
only multiplicatively. We therefore evaluate the estimator's
normalized performance. 

Note that for both properties, the amplification factor is 
logarithmic in the property value, which can be arbitrarily larger
than the sample size $n$. The following two theorems hold for $\epsilon\leq e^{-2}$,

\begin{Theorem}[Support size]\label{thm3}
\[
\frac{1}{S_{\vec{p}}}L(\hat{S},\vec{p}, n) \leq \frac{1}{S_{\vec{p}}}L\Paren{\hat{S}^E,\vec{p}, |\log^{-2}{\varepsilon}|\cdot n\log S_{\vec{p}} } +{c}
\Paren{S_{\vec{p}}^{|\log^{-1}{\varepsilon}|-\frac12}+\varepsilon}.
\]
\end{Theorem}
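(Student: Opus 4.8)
The plan is to follow the polynomial-approximation paradigm used for minimax support-size estimation~\cite{mmsize,mmcover}, executed in the competitive, per-distribution setting. Abbreviate $S:=S_{\vec p}$, set $N:=\Abs{\log^{-2}\varepsilon}\cdot n\log S$ and $t:=N/n$. First I would \emph{Poissonize}: drawing $\Poi(n)$ samples makes the counts $N_i$ independent with $N_i\sim\Poi(np_i)$ and changes every MAE by $\mathcal{O}(n^{-1/2})$, which is absorbed into the additive term (since $\varepsilon\le e^{-2}$, the bound $S^{\Abs{\log^{-1}\varepsilon}-1/2}+\varepsilon$ is meaningful and dominates $n^{-1/2}$ throughout the range where the claim is not already trivial). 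Under Poisson sampling $\Exp\hat S^E(X^N)=\sum_i\Paren{1-e^{-t\,np_i}}$, so it suffices to build a linear estimator $\hat S(X^n)=\sum_i\phi(N_i)$ whose expectation $\sum_i\Exp\phi(N_i)$ is within $\mathcal{O}(\varepsilon)S$ of this quantity and whose variance is small, subject to the constraint $\phi(0)=0$ — then the $k-S$ zero-probability symbols drop out and the (unknown, possibly infinite) alphabet size never enters.

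To build $\phi$ I would start from the exact but unusable unbiased estimator $\phi(N)=1-(1-t)^N$ (it satisfies $\Exp_{\Poi(\lambda)}\phi(N)=1-e^{-t\lambda}$ but has variance exponential in $t$) and pass to a truncated Chebyshev surrogate: choose a degree-$L$ polynomial $Q$ with $L=\Theta\Paren{\Abs{\log^{-1}\varepsilon}\log S}$, tailored so that its Poisson-unbiased form $\widetilde Q$ — obtained by substituting the falling factorials $N^{\underline j}=N(N-1)\cdots(N-j+1)$ for $\lambda^{j}$, which gives $\Exp_{\Poi(\lambda)}\widetilde Q(N)=Q(\lambda)$ — satisfies $Q(\lambda)\approx 1-e^{-t\lambda}$ on the ``interesting'' range $\lambda\le\tau:=\Abs{\log\varepsilon}$ of effective sample sizes $np_i$ (beyond which $1-e^{-t\lambda}$ is already within $\varepsilon$ of its limit $1$), then truncate past a count threshold $\kappa$ and switch to the empirical indicator for large counts:
\[
\phi(N)\;=\;\begin{cases}\ \widetilde Q(N)&N\le\kappa,\\[2pt]\ 1&N>\kappa,\end{cases}\qquad \hat S(X^n):=\sum_{i\in[k]}\phi(N_i),
\]
normalized so $\phi(0)=0$ (and, harmlessly, capped at $n^{2}$). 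The amplification factor is forced by $L\asymp t\tau$ together with $\tau\asymp\Abs{\log\varepsilon}$, giving $t\asymp\Abs{\log^{-2}\varepsilon}\log S$. Since $\hat S$ must not depend on $S$, I would calibrate $L$ and $\kappa$ through $\log n$; whenever $\log S=\omega(\log n)$ the empirical estimator with $N$ samples sees at most $N<S$ symbols, so the right side of the theorem exceeds $1-o(1)$ and the bound is trivial — hence assume $\log S\asymp\log n$ and $L\asymp\Abs{\log^{-1}\varepsilon}\log S$.

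I would then bound bias and variance separately and combine via $L(\hat S,\vec p,n)\le\Abs{\Exp\hat S-S}+\sqrt{\Var(\hat S)}$. For the bias, $\Abs{\Exp\hat S-\Exp\hat S^E(X^N)}\le\sum_{i:\,p_i>0}\Abs{\Exp\phi(N_i)-(1-e^{-t\,np_i})}$, and each summand is controlled by the approximation error of $Q$ (at most $\varepsilon$ for the stated degree), the Poisson tail $\PP(N_i>\kappa)$, and the loss from truncating $\widetilde Q$ at $\kappa$; summing over the $S$ positive-probability symbols gives $\mathcal{O}(\varepsilon)S$, so $\Abs{\Exp\hat S-S}\le L(\hat S^E,\vec p,N)+\mathcal{O}(\varepsilon)S$ by Jensen. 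For the variance, independence gives $\Var(\hat S)=\sum_{i:\,p_i>0}\Var(\phi(N_i))$ — only $S$ terms — and bounding each $\Var(\phi(N_i))\le\Exp\Br{\phi(N_i)^2}$ by combining the magnitude of the monomial coefficients of $Q$ with the factorial-moment estimates $\Exp_{\Poi(\lambda)}\Br{(N^{\underline j})^2}\le(C\tau)^{j}j!$ (valid for $\lambda\le\tau$, up to a factor $\varepsilon^{-\mathcal{O}(1)}$ that is $S^{o(1)}$) yields $\Var(\phi(N_i))\le S^{\,2\Abs{\log^{-1}\varepsilon}+o(1)}$, hence $\sqrt{\Var(\hat S)}\le S^{\,1/2+\Abs{\log^{-1}\varepsilon}+o(1)}$. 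Dividing by $S$ and de-Poissonizing gives $\tfrac1S L(\hat S,\vec p,n)\le\tfrac1S L(\hat S^E,\vec p,N)+\mathcal{O}\Paren{\varepsilon+S^{\Abs{\log^{-1}\varepsilon}-1/2}}$, which is the theorem.

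The hardest part will be the joint design of $Q$ and the accompanying variance estimate. The per-symbol variance must come out to $S^{2\Abs{\log^{-1}\varepsilon}+o(1)}$ \emph{while simultaneously} the admissible amplification reaches $t=\Abs{\log^{-2}\varepsilon}\log S$, and the two are coupled through the single degree parameter $L$ ($\sqrt{\Var}\sim(\mathrm{const})^{L}$ caps $L$ at $\mathcal{O}(\Abs{\log^{-1}\varepsilon}\log S)$, whereas faithful approximation demands $L=\Omega(t\tau)$). One must also keep $Q$ and its unbiased version $\widetilde Q$ from blowing up on symbols whose effective sample size $np_i$ lies in the intermediate range — too large for the primary polynomial window yet too small for $N_i>\kappa$ to be guaranteed — and keep the estimator oblivious to $S$. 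By contrast, the Poissonization and the bias/variance bookkeeping are routine once a suitable polynomial is in hand.
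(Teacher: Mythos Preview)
Your outline is plausible but takes a substantially different, and harder, route than the paper.

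The paper does not use a Chebyshev surrogate. It takes the exact unbiased estimator you already wrote down, $\phi(N)=1-(1-t)^N$ (the Good--Toulmin estimator), and applies \emph{Poisson smoothing}: with $Z\sim\Poi(r)$ it sets
\[
\hat S(X^N)=\sum_{j\ge 1}\phi_j\Bigl(1-(-(a-1))^j\Pr(Z\ge j)\Bigr),\qquad a=\Abs{\log^{-2}\varepsilon}\log S_{\vec p}.
\]
The point of this particular smoothing is that the coefficients are \emph{uniformly} bounded: $\bigl|1-(-(a-1))^j\Pr(Z\ge j)\bigr|\le 1+e^{r(a-1)}$ for every $j$. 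That single bound replaces your entire variance computation: concentration follows directly from McDiarmid's inequality (bounded differences $2(1+e^{r(a-1)})$, over the $S_{\vec p}$ nonzero coordinates), giving $\EE\Abs{\hat S-\EE\hat S}=\mathcal{O}\bigl(\sqrt{S_{\vec p}}\,(1+e^{r(a-1)})\bigr)$. The bias is read off the Good--Toulmin analysis of~\cite{pnas}: $\bigl|\EE\hat S-\EE\hat S^E(X^{na})\bigr|\le\min\{na,S_{\vec p}\}e^{-r}+2$. Choosing $r=\Abs{\log\varepsilon}$ makes $e^{-r}=\varepsilon$ and $e^{r(a-1)}\le S_{\vec p}^{\,\Abs{\log^{-1}\varepsilon}}$, and after separately bounding the MAD of $\hat S^E(X^{na})$ by $\mathcal{O}(\sqrt{S_{\vec p}})$, the triangle inequality on $\EE\Abs{\hat S-\hat S^E(X^{na})}$ gives the theorem. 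There is no thresholding, no ``intermediate range'', and no factorial-moment bookkeeping.

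What this buys relative to your plan is exactly the elimination of the step you correctly flag as hardest: in the smoothed Good--Toulmin construction the coefficient bound is \emph{given}, not derived from the monomial-coefficient growth of a min-max polynomial, so the coupling between admissible amplification and per-symbol variance is a one-line parameter choice rather than a design problem. Your Chebyshev route can presumably be pushed through, but it re-derives from scratch what Poisson smoothing supplies for free. Also note that the paper's estimator is in fact allowed to depend on $\log S_{\vec p}$ (it enters through $a$), so your detour through $\log n$ and the ``$\log S\asymp\log n$ else the bound is trivial'' reduction is an additional complication the paper does not address.
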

To make the slack term vanish, one can simply set $\varepsilon$ to be a vanishing function of $n$ (or $S_{\vec{p}}$), e.g., $\varepsilon=1/\log n$. Note that in this case, the slack term modifies the multiplicative error in estimating $S_{\vec{p}}$ by only $o(1)$, which is negligible in most applications. 
Similarly, for support coverage, 
\begin{Theorem}[Support coverage]\label{thm4}
Abbreviating $C({\vec{p}})$ by $C_{\vec{p}}$,
\[
\frac{1}{C_{\vec{p}}}L(\hat{C},\vec{p}, n) \leq \frac{1}{C_{\vec{p}}}L\Paren{\hat{C}^E,\vec{p}, |\log^{-2}{\varepsilon}|\cdot n\log C_{\vec{p}}} +c\Paren{C_{\vec{p}}^{|\log^{-1}{\varepsilon}|-\frac12}+\varepsilon}.
\]
\end{Theorem}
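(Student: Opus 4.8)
The plan is to reduce the statement to estimating a Poisson‑smoothed version of support coverage, and then to build a symbol‑by‑symbol estimator that debiases a low‑degree polynomial for rarely‑seen symbols while estimating frequently‑seen symbols directly. First I would Poissonize the sample, so that the counts $N_i$ become independent $\Poi(np_i)$; the standard de‑Poissonization step perturbs the MAE by a negligible amount, to be absorbed into the final $\varepsilon$‑slack. Put $N:=\Abs{\log^{-2}\varepsilon}\,n\log C_{\vec p}$ and $C^{N}(\vec p):=\Exp_{Y^{N}\sim\vec p}\Br{\hat C^{E}(Y^{N})}=\sum_i\phi_i(p_i)$, where $\phi_i(\lambda):=\Exp_{Z\sim\Poi(N\lambda)}\Br{1-(1-Z/N)^{m}}$. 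The triangle inequality gives $L(\hat C,\vec p,n)\le\Exp\Abs{\hat C-C^{N}(\vec p)}+\Abs{C^{N}(\vec p)-C_{\vec p}}$, and Jensen's inequality bounds the last term by $L(\hat C^{E},\vec p,N)$. So it suffices to exhibit $\hat C$ with $\Exp\Abs{\hat C-C^{N}(\vec p)}\le c\,C_{\vec p}\Paren{C_{\vec p}^{\Abs{\log^{-1}\varepsilon}-1/2}+\varepsilon}$.

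To build $\hat C$ I would use that each $\phi_i$ is entire, vanishes at $0$, lies in $[0,1]$ on the relevant range, and behaves like $1-e^{-\mu\lambda}$ with $\mu\asymp\min(m,N)$, so it climbs from $0$ to near $1$ around $\lambda\sim1/\mu$. Fix a degree $L\asymp\Abs{\log^{-1}\varepsilon}\log C_{\vec p}$, a count threshold $\theta\asymp L$, and a ``low'' interval $\delta\asymp L/N$ (so that $\{N_i<\theta\}$ essentially coincides with $\{p_i\le\delta\}$). For a symbol with $N_i\ge\theta$, estimate its contribution by a clipped value near $1$, which is accurate because such a count forces $\phi_i(p_i)\approx1$. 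For $N_i<\theta$, use $\hat\psi_i:=\sum_{j=1}^{L}a_{j}\,N_i^{\underline{j}}/n^{j}$, where the $a_j$ are the monomial coefficients of the best (or a near‑best) degree‑$L$ polynomial approximation of $\phi_i$ on $[0,\delta]$; since $\Exp_{\Poi(np_i)}\Br{N_i^{\underline{j}}/n^{j}}=p_i^{j}$, this is an unbiased estimator of that polynomial, hence nearly unbiased for $\phi_i(p_i)$ on the low range. Summing over $i$ defines $\hat C$.

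The analysis then controls $\Exp\Abs{\hat C-C^{N}(\vec p)}$ through a bias term and a variance term. The bias is the easier part: it collects the polynomial approximation errors over the relevant symbols (the ones with non‑negligible $p_i$, of which there are $O(C_{\vec p})$), together with the high‑count, clipping, and de‑Poissonization errors, and one uses $\sum_i p_i\le1$ and the boundedness of the $\phi_i$ to see that the choice $L\asymp\Abs{\log^{-1}\varepsilon}\log C_{\vec p}$ makes it at most the variance term plus $O(C_{\vec p}\varepsilon)$. For the variance one bounds $\Var(\hat\psi_i)$ using the sizes of the coefficients $a_j$ --- which scale like powers of the amplification ratio $N/n$ --- and the Poisson cross‑moments $\Exp\Br{N_i^{\underline{j}}N_i^{\underline{j'}}}/n^{j+j'}$; the crucial saving is that $\hat\psi_i$ vanishes at $N_i=0$, so a factor of $p_i$ factors out. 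Summing over the $O(C_{\vec p})$ relevant symbols and applying Cauchy--Schwarz yields, for the chosen $L$, a total of $\mathcal{O}\Paren{C_{\vec p}^{1/2+\Abs{\log^{-1}\varepsilon}}}$; dividing by $C_{\vec p}$ produces exactly the claimed slack $C_{\vec p}^{\Abs{\log^{-1}\varepsilon}-1/2}+\varepsilon$.

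The step I expect to be the main obstacle is this variance bound: taming the $(N/n)^{\Theta(L)}$ blow‑up of the debiasing coefficients against the $p_i$‑factor requires a delicate joint choice of $\delta$, $\theta$ and $L$, made harder by the fact that the estimator cannot depend on the unknown $C_{\vec p}$ (so it must be run at the safe universal scale $m\ge C_{\vec p}$ while the analysis proceeds at the instance scale). The hypothesis $\varepsilon\le e^{-2}$ is exactly what renders $\Abs{\log^{-1}\varepsilon}<1/2$, so the exponent $\Abs{\log^{-1}\varepsilon}-1/2$ is negative and the slack vanishes as $C_{\vec p}\to\infty$. Everything else --- the Poisson‑moment identities, the coefficient bounds for near‑best approximating polynomials, and the de‑Poissonization --- is routine and closely parallels the proof of Theorem~\ref{thm3}, with $1-(1-p)^{m}$ playing the role of $\indic_{p>0}$ and $C_{\vec p}\le\min(k,m)$ playing the role of $S_{\vec p}\le k$.
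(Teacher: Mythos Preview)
Your proposal takes a genuinely different route from the paper. You propose to mimic the entropy argument: approximate the Bernstein/Poisson‑smoothed coverage function $\phi_i$ by a low‑degree polynomial on a short interval, debias it via falling factorials for rarely‑seen symbols, and clip to $1$ for frequently‑seen symbols. The paper does \emph{not} do this. Its estimator for coverage is a smoothed Good--Toulmin type linear estimator in the profile,
\[
\hat C(X^N)=\sum_{j\ge1}\phi_j\Paren{1-(-(a'-1))^j\Pr(\Poi(r)\ge j)},
\]
with $a'=a(1-e^{-m/(na)})$ and $a=|\log^{-2}\varepsilon|\log C_{\vec p}$. The bias is handled by reducing $\EE[\hat C^E(X^{na})]$ to the ``Poissonized coverage'' $T_1(\vec p)=\sum_i(1-e^{-na'p_i})$ (losing only $O(1)$), and then bounding the remaining alternating series via the Bessel‑function inequality $|J_0|\le1$, which yields $|\EE\hat C-\EE\hat C^E|\le e^{-r}C_{\vec p}+O(1)$. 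The variance is bounded by observing that every coefficient has modulus at most $1+e^{r(a'-1)}$, so $\EE|\hat C-\EE\hat C|\le(1+e^{r(a'-1)})\sqrt{C_{\vec p}}$. Choosing $r=|\log\varepsilon|$ gives the stated slack. In particular, there is no small/large split, no polynomial approximation, and no per‑symbol coefficient analysis.

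Your remark that ``everything else closely parallels the proof of Theorem~\ref{thm3}'' is also off: the paper's support‑size proof is the \emph{same} Good--Toulmin construction (with $a$ in place of $a'$), not a polynomial‑approximation argument. What the paper's approach buys is explicitness and simplicity: the alternating‑series structure is tailor‑made for extrapolating $1-e^{-n\lambda}$ to $1-e^{-na'\lambda}$, so the bias and variance are one‑line computations once the Bessel bound is invoked. Your approach, if it can be pushed through, would be more uniform with the entropy/Lipschitz analysis, but it carries an extra burden you have not discharged: when $m\gg n$ the function $\lambda\mapsto\phi_i(\lambda)\approx1-e^{-\mu\lambda}$ (with $\mu\asymp\min(m,N)$) transitions from $0$ to $1$ well inside your ``low'' interval, and a degree‑$L\asymp|\log^{-1}\varepsilon|\log C_{\vec p}$ polynomial with controllable coefficients need not approximate it to the required accuracy there. (Relatedly, your $\delta\asymp L/N$ and $\theta\asymp L$ are inconsistent, since $\{N_i<\theta\}$ with $N_i\sim\Poi(np_i)$ localizes $p_i$ at scale $\theta/n$, not $\theta/N$.) The paper avoids this difficulty entirely because the Good--Toulmin coefficients are designed so that the bias telescopes exactly against the target exponential.
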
 
For notational convenience, let $h(p):=-p\log p$ for entropy, $\ell_{q}(p):=|p-q|-q$ for $\ell_1$-distance, $s(p):=\indic_{p>0}$ for support size, and $c(p):=1-(1-p)^{m}$ for support coverage. 
In the next section, we provide an outline of the remaining contents, and a high-level overview of our techniques.

\section{Outline and technique overview}
In the main paper, we focus on Shannon entropy and prove a
weaker version of 
Theorem~\ref{thm1}.
\begin{Theorem}\label{thm5}
For all $\varepsilon\leq 1$ and all distributions $\vec{p}$, the estimator $\hat{H}$ described in Section~\ref{Est_const} satisfies
\[
L(\hat{H},\vec{p}, n)\le L(\hat{H}^E,\vec{p}, \varepsilon n\log n)+\Paren{1+c\cdot \varepsilon}\land \Paren{\frac{S_{\vec{p}}}{\varepsilon n}+\frac{1}{n^{0.49}}}.
\]
\end{Theorem}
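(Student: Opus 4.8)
The idea is to run the Poissonization-plus-polynomial-approximation machinery, but aimed not at each summand $h(p_i)=-p_i\log p_i$ but at the \emph{expected empirical estimate at the amplified sample size} $M:=\varepsilon n\log n$. First pass to the Poisson model: draw $\Poi(n)$ samples so that the multiplicities $N_i\sim\Poi(\lambda_i)$, $\lambda_i:=np_i$, become independent; the discrepancy between the fixed-$n$ and Poissonized MAEs is $\mathcal{O}(1/\sqrt{n})$ and is charged to the $n^{-0.49}$ term. Then $\EE[\hat{H}^E_M]=\sum_i\phi(\lambda_i)$ with $\phi(\lambda):=\EE_{Y\sim\Poi(\varepsilon\lambda\log n)}[h(Y/M)]$, and $\Var(\hat{H}^E_M)$ is negligible. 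Writing the estimator as a sum $\hat{H}=\sum_i g_i(N_i)$ of independent per-symbol pieces and combining the triangle inequality with a bias--variance split,
\[
L(\hat{H},\vec{p},n)\;\le\;\sqrt{\Var(\hat{H})}\;+\;\Abs{\sum_i\Paren{\EE[g_i]-\phi(\lambda_i)}}\;+\;L(\hat{H}^E,\vec{p},M)\;+\;\mathcal{O}\Paren{\tfrac{1}{\sqrt{\varepsilon n\log n}}},
\]
where the last term also absorbs the Poissonization correction for $\hat{H}^E_M$; it thus suffices to make both $\sqrt{\Var(\hat{H})}$ and the total bias $\Abs{\sum_i(\EE[g_i]-\phi(\lambda_i))}$ fit inside the claimed slack.

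\paragraph{Construction.}
Fix a threshold $\tau=\Theta(\log n)$ and degree $L=\Theta(\log n)$ with a small constant. For a ``frequent'' symbol ($N_i>\tau$), $\lambda_i=\Omega(\log n)$ with high probability, hence $\varepsilon\lambda_i\log n$ is large, $\phi(\lambda_i)\approx h(p_i)$, and the plug-in $h(N_i/n)$ already works; take $g_i:=h(N_i/n)$ plus the usual first-order bias correction, which lowers its bias against $\phi(\lambda_i)$ to $\mathcal{O}\Paren{1/(n^2 p_i)}$. For a ``rare'' symbol ($N_i\le\tau$), exploit the structure $\phi(\lambda)=\tfrac{\log M}{n}\lambda-\tfrac{1}{M}\,g(\varepsilon\lambda\log n)$ with $g(\mu):=\EE_{Y\sim\Poi(\mu)}[Y\log Y]$: the nonlinear part of $\phi$ is a rescaling of $g$, whose only irregularity on $[0,\infty)$ is a logarithmic blow-up of its derivative at $0$ (it tracks $\mu\log\mu$), so $\phi$ admits a degree-$L$ polynomial approximant $P(\lambda)=\sum_{j=1}^L a_j\lambda^j$ on $[0,c\log n]$ with no constant term (since $\phi(0)=0$) and sup-error $\delta_L$. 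Choosing $L$ just large enough that $\delta_L\le 1/n$, set $g_i$ to be the unique unbiased estimator $\sum_{j=1}^L a_j N_i^{\underline{j}}$ of $P(\lambda_i)$ (falling factorials), clipped to $\big[0,\,\max_{[0,c\log n]}\phi+\delta_L\big]$.

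\paragraph{Bias.}
Only rare symbols with $0<\lambda_i<c\log n$ contribute to the rare part (those with $\lambda_i=0$ give $0$ on both sides, as $P$ has no constant term), so there are at most $S_{\vec{p}}$ of them and, since $\delta_L\le 1/n$, their approximation bias totals $\le S_{\vec{p}}/n\le S_{\vec{p}}/(\varepsilon n)$; the clipping bias is exponentially small away from $\lambda_i=0$ and, near $0$, is controlled by the linear coefficient $a_1$ of $P$, which is nonnegative because $g(\mu)/\mu\to 0$; the regime-misclassification bias is $n^{-\Omega(1)}$ per symbol by Poisson tail bounds, with the $\mathcal{O}(n/\log n)$ borderline symbols near $\tau$ acceptable under either rule. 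Frequent symbols add $\sum_i\mathcal{O}\Paren{1/(n^2 p_i)}=o(1)$. Altogether this yields the $S_{\vec{p}}/(\varepsilon n)+n^{-0.49}$ branch. The complementary $1+c\varepsilon$ branch --- the easy case, binding only when $S_{\vec{p}}$ is so large that the other branch is vacuous --- follows from a cruder estimate: the empirical estimator systematically underestimates entropy, so $L(\hat{H}^E,\vec{p},M)\ge H-\sum_i\phi(\lambda_i)$, and the clipping built into every $g_i$ together with $\phi\le\max_{[0,1]}h$ pins $\EE[\hat{H}]$ into a window making $\Abs{\EE[\hat{H}]-H}\le L(\hat{H}^E,\vec{p},M)+c\varepsilon+o(1)$.

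\paragraph{Variance, and the main obstacle.}
By independence $\Var(\hat{H})=\sum_i\Var(g_i)$; the frequent part contributes $\mathcal{O}(\mathrm{polylog}(n)/n)$. For the rare part, Chebyshev-type coefficient bounds give $|a_j|\le 2^{\mathcal{O}(L)}/(n\,(c\log n)^{j-1})$ (the $\varepsilon\log n$ rescaling inside $g$ cancels against the normalization), and for $\lambda_i\le c\log n$ the falling-factorial second moments satisfy $\EE[(N_i^{\underline{j}})^2]\le (c'\log n)^{2j+1}$, so each $\Var(g_i)\le 2^{\mathcal{O}(L)}\,\mathrm{polylog}(n)\cdot\min(1,\lambda_i)/n^2$; summing over the at most $n$ symbols with $\lambda_i\ge 1$ (the rest contribute even less, as $\sum_i\lambda_i=n$) gives $\Var(\hat{H})\le 2^{\mathcal{O}(L)}\,\mathrm{polylog}(n)/n\le n^{-0.98}$ once $L\le c\log n$ with $c$ small enough, so $\sqrt{\Var(\hat{H})}\le n^{-0.49}$. (For small $n$ the $1+c\varepsilon$ term dominates and there is nothing to prove, so one may take $n$ large.) The crux is exactly this bias--variance tension in the choice of $L$: increasing the degree shrinks $\delta_L$ (hence the $S_{\vec{p}}/(\varepsilon n)$ term) but inflates $\Var(\hat{H})$ exponentially in $L$, and the range of $L$ for which both stay small has width $\Theta(\log n)$ --- which is precisely what caps the amplification at $\log n$; carrying all of this out \emph{pointwise in }$\vec{p}$ rather than only for the worst case, while keeping the clipping-induced biases under control, is what requires the most care.
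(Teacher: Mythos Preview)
Your overall architecture---Poissonize, split symbols by count, use a low-degree polynomial unbiased estimator on the rare side and plug-in on the frequent side, then do a bias--variance split against $\sum_i\phi(\lambda_i)=\EE[\hat H^E_M]$---is the same as the paper's. The variance discussion and the $S_{\vec p}/(\varepsilon n)$ branch of the bias are essentially right. But there is a real gap in the $1+c\varepsilon$ branch, and it is exactly the new idea the paper supplies.

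Your polynomial $P$ is chosen so that $\sup_{\lambda\in[0,c\log n]}|P(\lambda)-\phi(\lambda)|\le\delta_L\le 1/n$. That yields a \emph{constant} per-symbol bias bound $|\EE[g_i]-\phi(\lambda_i)|\le\delta_L$, hence a total of $S_{\vec p}\,\delta_L$; for distributions with $S_{\vec p}\gg n$ this blows up, and your ``cruder estimate'' via clipping does not rescue it: clipping each $g_i$ into $[0,\max_{[0,c\log n]}\phi+\delta_L]$ only bounds $\EE[g_i]$ by $\Theta(\log^2 n/n)$, so $\EE[\hat H]$ is bounded by $S_{\vec p}\cdot\Theta(\log^2 n/n)$, which says nothing about $|\EE[\hat H]-\sum_i\phi(\lambda_i)|$ being $\le 1+c\varepsilon$. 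In short, a sup-norm guarantee on $P-\phi$ cannot give a bound independent of $S_{\vec p}$.

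What the paper does instead is approximate the \emph{derivative}: it finds a degree-$(d-1)$ polynomial $\tilde h_{na}$ with $|B_{na}'(h,x)-\tilde h_{na}(x)|\le 1+\mathcal O(\varepsilon)$ on $I_n$, and then takes $\tilde H_{na}(x)=\int_0^x\tilde h_{na}$. Since $B_{na}(h,0)=0$, this gives the \emph{pointwise-weighted} bound $|\tilde H_{na}(x)-B_{na}(h,x)|\le x\,(1+\mathcal O(\varepsilon))$, i.e.\ per-symbol bias proportional to $p_i$. Summing over $i$ yields $\sum_i p_i(1+\mathcal O(\varepsilon))=1+\mathcal O(\varepsilon)$ regardless of $S_{\vec p}$, and simultaneously $\le(1+\mathcal O(\varepsilon))\sum_{i\,\text{small}}p_i\le(1+\mathcal O(\varepsilon))(\varepsilon^{-1}{+}1)S_{\vec p}/n$ on the small side. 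This ``approximate the derivative, then integrate'' step is the mechanism that produces both branches of the $\min$ from a single polynomial, and it is missing from your plan.

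A secondary issue: you threshold on $N_i$ from the \emph{same} sample used to form the falling-factorial estimator. Then $\EE[\,\sum_j a_j N_i^{\underline j}\cdot\indic_{N_i\le\tau}\,]\ne P(\lambda_i)\Pr(N_i\le\tau)$, and the bias analysis you wrote (which implicitly treats the polynomial part as unbiased for $P(\lambda_i)$) does not go through as stated. The paper avoids this by drawing an independent Poisson sample $X^{N'}$ and thresholding on $N_i'$, so that the indicator is independent of $N_i$; you should do the same (or argue carefully around the dependence, which is possible but not free).
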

The proof of Theorem~\ref{thm5} in the rest of the paper is organized as follows. 
In Section~\ref{useful_tools}, we present a few useful concentration inequalities for Poisson and binomial random variables.
In Section~\ref{Bernstein}, we relate the bias of
the $n$-sample empirical estimator to the degree-$n$ Bernstein
polynomial $B_n(h,x)$ by $B_n(h,p_i)=\EE[h(N_i/n)]$.
In Section~\ref{der_Bern}, we show that the
absolute difference between the \emph{derivative} of $B_n(h,x)$ and a simple
function $h_n(x)$ is at most $1$, uniformly for all $x\leq 1-(n-1)^{-1}$. 

Let $a:=\varepsilon\log n$ be the amplification parameter. 
In Section~\ref{approx_der} we approximate $h_{na}(x)$
by a degree-$\Theta(\log n)$ polynomial $\tilde{h}_{na}(x)$,
 and bound the approximation error uniformly by $c\cdot \varepsilon$.
Let $\tilde{H}_{na}(x):= \int_0^{x}\tilde{h}_{na}(t) dt$.
By construction, $|B_{na}'(h,x)-\tilde{h}_{na}(x)|\leq |B_{na}'(h,x)-{h}_{na}(x)|+|{h}_{na}(x)-\tilde{h}_{na}(x)|\leq 1+c\cdot \varepsilon$, implying
$|\tilde{H}_{na}(x)-B_{na}(h,x)|\le x(1+c\cdot \varepsilon)$.

In Section~\ref{Est_const}, we construct our
estimator $\hat{H}$ as follows. First, we divide the symbols into
small- and large- probability symbols according to their counts
in an independent $n$-element sample sequence. 
The concentration inequalities in Section~\ref{useful_tools} imply that this step can be performed with relatively high confidence.
Then, we estimate the partial entropy of each
small-probability symbol $i$ with a near-unbiased estimator of
$\tilde{H}_{na}(p_i)$, and the combined partial entropy of the large-probability symbols
with a simple variant of the empirical estimator. The final estimator
is the sum of these small- and large- probability estimators.

In Section~\ref{Bias_bound}, we bound the bias of $\hat{H}$.
In Sections~\ref{Bias_small} and~\ref{Bias_large}, we 
use properties of $\tilde{H}_{na}$ and the Bernstein polynomials
to bound the partial biases of the small- and large-probability estimators in
terms of $n$, respectively. The key observation is $|\sum_{i}\tilde{H}_{na}(p_i)-B_{na}(h,p_i)|\le \sum_{i}p_i(1+c\cdot \varepsilon)=1+c\cdot \varepsilon$, 
implying that the small-probability estimator has a small bias. To bound the bias of the large-probability estimator, 
we essentially rely on the elegant inequality $|B_n(h,x)-h(x)|\leq 1/n$.

By the triangle inequality, it remains
to bound the mean absolute deviation of $\hat{H}$. We bound this
quantity by bounding the
partial variances of the small- and large- probability estimators in
Section~\ref{Var_small} and Section~\ref{Var_large}, respectively. 
Intuitively speaking, the small-probability estimator has a small variance because it is constructed based on a low-degree polynomial;
  the large-probability estimator has a small variance because $h(x)$ is smoother for larger values of $x$. 

To demonstrate the efficacy of our methods, in Section~\ref{experiments},
we compare the experimental performance of our estimators with that of
the state-of-the-art property estimators for Shannon entropy and
support size over nine distributions. Our competitive estimators outperformed these existing algorithms on nearly all the experimented instances.

Replacing the simple function $h_{n}(x)$ by a much finer approximation of $B_n(h,x)$, we establish the full version of Theorem~\ref{thm1} in Appendix~\ref{refined_est}. 
Applying similar techniques, we prove the other four results in Appendices~\ref{comp_uniformity} (Theorem~\ref{thm2.1} and~\ref{thm2.2}),~\ref{comp_size} (Theorem~\ref{thm3}),
and~\ref{comp_coverage} (Theorem~\ref{thm4}).

\section{Concentration inequalities}\label{useful_tools}
The following lemma gives tight tail probability bounds for Poisson and binomial random variables.
\begin{Lemma}\label{tailprob}~\cite{concen}
Let $X$ be a Poisson or binomial random variable with mean $\mu$, then for any $\delta>0$, 
\[
\PP(X\geq{(1+\delta)\mu})\leq{{\left(\frac{e^\delta}{(1+\delta)^{(1+\delta)}}\right)}^{\mu}}\leq{e^{-(\delta^2\land\delta)\mu/3}},
\]
and for any $\delta\in{(0, 1)}$, 
\[
\PP(X\leq{(1-\delta)\mu})\leq{{\left(\frac{e^{-\delta}}{(1-\delta)^{(1-\delta)}}\right)}^{\mu}}\leq{e^{-\delta^2\mu/2}}.
\]
\end{Lemma}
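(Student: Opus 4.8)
The plan is to prove the lemma by the classical exponential-moment (Chernoff) method, first reducing the binomial case to the Poisson case and then treating the two tails separately. Recall that a Poisson random variable $X$ with mean $\mu$ has moment generating function $\EE[e^{tX}]=e^{\mu(e^{t}-1)}$ for every real $t$, while a binomial random variable with mean $\mu=np$ has $\EE[e^{tX}]=(1-p+pe^{t})^{n}=(1+p(e^{t}-1))^{n}\le e^{np(e^{t}-1)}=e^{\mu(e^{t}-1)}$, using $1+x\le e^{x}$; the same bound holds with $t$ replaced by $-t$. Thus the binomial moment generating function is dominated by that of a Poisson with the same mean, so it suffices to establish both inequalities in the Poisson case — the binomial bounds then follow verbatim from the same Markov step.

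For the upper tail, Markov's inequality applied to $e^{tX}$ with $t>0$ gives $\PP(X\ge(1+\delta)\mu)\le e^{-t(1+\delta)\mu}\,e^{\mu(e^{t}-1)}$. Minimizing the exponent over $t$ yields $e^{t}=1+\delta$, i.e.\ $t=\log(1+\delta)>0$, and substituting back produces exactly $\Paren{e^{\delta}/(1+\delta)^{1+\delta}}^{\mu}$. It then remains to verify the elementary inequality $\delta-(1+\delta)\log(1+\delta)\le -(\delta^{2}\land\delta)/3$, which I would do by a short calculus argument splitting on the regimes $\delta\le 1$ and $\delta>1$ (where $\delta^{2}\land\delta$ equals $\delta^{2}$ and $\delta$ respectively): set $g(\delta):=(1+\delta)\log(1+\delta)-\delta-(\delta^{2}\land\delta)/3$, check $g(0)=0$, and show $g'\ge 0$ on each regime — for $\delta\le 1$ this needs a brief second-derivative analysis of $g'(\delta)=\log(1+\delta)-2\delta/3$, while for $\delta>1$ it is immediate.

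For the lower tail, Markov's inequality applied to $e^{-tX}$ with $t>0$ gives $\PP(X\le(1-\delta)\mu)\le e^{t(1-\delta)\mu}\,e^{\mu(e^{-t}-1)}$, and minimizing the exponent gives $e^{-t}=1-\delta$, i.e.\ $t=-\log(1-\delta)>0$ for $\delta\in(0,1)$, yielding $\Paren{e^{-\delta}/(1-\delta)^{1-\delta}}^{\mu}$. The remaining bound $e^{-\delta}/(1-\delta)^{1-\delta}\le e^{-\delta^{2}/2}$ is equivalent to $(1-\delta)\log(1-\delta)\ge -\delta+\delta^{2}/2$, which follows cleanly from the power-series identity $(1-\delta)\log(1-\delta)=-\delta+\sum_{k\ge 2}\frac{\delta^{k}}{k(k-1)}$, since the $k=2$ term is exactly $\delta^{2}/2$ and all further terms are nonnegative. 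The only mildly delicate point in the whole argument is the two-regime calculus check for the upper tail; the reduction step and the lower tail are direct computations.
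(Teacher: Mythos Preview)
Your argument is correct and is exactly the standard Chernoff/exponential-moment derivation of these bounds. Note, however, that the paper does not actually prove this lemma: it simply quotes it from the cited concentration-inequalities reference, so there is no in-paper proof to compare against. Your write-up supplies precisely the textbook proof that reference would contain; the reduction of the binomial case to the Poisson MGF bound, the two Markov/optimization steps, and the elementary calculus checks (including the two-regime analysis for the upper tail and the power-series argument for the lower tail) are all sound.
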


\pagebreak
\section{Approximating Bernstein polynomials}\label{Bernstein}
With $n$ samples, the bias of the empirical estimator in estimating $H(\vec{p})$ is
\[
\text{Bias}(\hat{H}^E, n):= \EE[\hat{H}^E(X^n)]-H(\vec{p}).
\]
By the linearity of expectation, the right-hand side equals
\[
\EE[\hat{H}^E(X^n)]-H(\vec{p}):=\sum_{i\in[k]}\Paren{\EE\left[h\Paren{\frac{N_i}{n}}\right]-h(p_i)}.
\]
Noting that the degree-$n$ Bernstein polynomial of $h$ is
\[
B_n(h,x):= \sum_{j=0}^{n}h\Paren{\frac{j}{n}}\binom{n}{j}x^j(1-x)^{n-j},
\]
we can express the bias of the empirical estimator as
\[
\text{Bias}(\hat{H}^E, n) = \sum_{i\in[k]}\Paren{B_n(h,p_i)-h(p_i)}.
\]
Given a sampling number $n$ and a parameter $\varepsilon\leq1$, define the
amplification factor $a:=\varepsilon \log n$.
Let $c_l$ and $c_s$ be sufficiently large and small constants, respectively. 
In the following sections, we find a polynomial $\tilde{h}_{na}(x)$ of degree $d-1:=c_s\log n-1$, whose error in approximating 
$B_{na}'(h, x)$ over $I_n:=[0,c_l\log n/n]$ satisfies
\[
|B_{na}'(h,x)-\tilde{h}_{na}(x)|\leq {1+\mathcal{O}\Paren{\varepsilon}}.
\]
Through a simple argument, the degree-$d$ polynomial
\[
\tilde{H}_{na}(x):= \int_0^{x}\tilde{h}_{na}(t) dt,
\]
approximates $B_{na}(h, x)$ with the following pointwise error guarantee. 
\begin{Lemma}
For any $x\in I_n$,
\[
|B_{na}(h,x)-\tilde{H}_{na}(x)|\leq x\Paren{1+\mathcal{O}\Paren{\varepsilon}}.
\]
\end{Lemma}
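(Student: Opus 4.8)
The plan is to read off the claimed pointwise bound from the derivative approximation already stated above, namely $\Abs{B_{na}'(h,x)-\tilde h_{na}(x)}\le 1+\mathcal{O}(\varepsilon)$ on $I_n$, using nothing more than the fundamental theorem of calculus together with the fact that both $B_{na}(h,\cdot)$ and $\tilde H_{na}$ vanish at the left endpoint of $I_n$.

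First I would record the two endpoint facts. Since $I_n=[0,\,c_l\log n/n]$ is an interval whose left endpoint is $0$, the whole segment $[0,x]$ lies inside $I_n$ for every $x\in I_n$, so the derivative bound holds at every $t\in[0,x]$; this is what lets us integrate it along the path from $0$ to $x$. Next, evaluating the Bernstein polynomial at $0$ kills every term except $j=0$, giving $B_{na}(h,0)=h(0)=0$ (with the standard convention $0\log 0=0$), while $\tilde H_{na}(0)=\int_0^0\tilde h_{na}(t)\,dt=0$ by definition. Hence $g:=B_{na}(h,\cdot)-\tilde H_{na}$ satisfies $g(0)=0$.

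Then I would observe that $B_{na}(h,\cdot)$ is a polynomial and $\tilde H_{na}$ is, by construction, an antiderivative of the polynomial $\tilde h_{na}$, so $g$ is continuously differentiable with $g'(t)=B_{na}'(h,t)-\tilde h_{na}(t)$. Applying the fundamental theorem of calculus and $g(0)=0$,
\[
B_{na}(h,x)-\tilde H_{na}(x)=\int_0^x\Paren{B_{na}'(h,t)-\tilde h_{na}(t)}\,dt,
\]
and therefore, using $x\ge 0$ on $I_n$, the triangle inequality for integrals, and the derivative bound on $[0,x]\subseteq I_n$,
\[
\Abs{B_{na}(h,x)-\tilde H_{na}(x)}\le \int_0^x\Abs{B_{na}'(h,t)-\tilde h_{na}(t)}\,dt\le \int_0^x\Paren{1+\mathcal{O}(\varepsilon)}\,dt=x\Paren{1+\mathcal{O}(\varepsilon)},
\]
which is exactly the assertion. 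There is no genuine obstacle in this step — it is a one-line integration argument; the only things worth double-checking are that $0\in I_n$ so the derivative estimate applies along the entire segment of integration, and that the removable singularity of $h$ at $0$ is handled by the convention $h(0)=0$ so that $B_{na}(h,0)=0$. All the real work sits in establishing the uniform derivative approximation $\Abs{B_{na}'(h,x)-\tilde h_{na}(x)}\le 1+\mathcal{O}(\varepsilon)$, which is carried out in the following sections.
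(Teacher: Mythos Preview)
Your proposal is correct and matches the paper's own argument essentially line for line: the paper also notes $B_{na}(h,0)=0$, writes $|B_{na}(h,x)-\tilde H_{na}(x)|\le\int_0^x|B_{na}'(h,t)-\tilde h_{na}(t)|\,dt$, and invokes the uniform derivative bound $1+\mathcal{O}(\varepsilon)$ on $I_n$ to conclude.
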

In Section~\ref{der_Bern}, we relate $B_n'(h,x)$ to a simple function $h_n(x)$, which can be expressed in terms of $h(x)$. In Section~\ref{approx_der}, we approximate $h_n(x)$ by a linear combination of degree-$d$ min-max polynomials of $h(x)$ over different intervals. The resulting polynomial is $\tilde{h}_{na}(x)$.
\subsection{The derivative of a Bernstein polynomial}\label{der_Bern}
According to~\cite{ber}, the first-order derivative of the Bernstein polynomial $B_n(h,x)$ is
\[
B_n'(h,x):= \sum_{j=0}^{n-1}n\Paren{h\Paren{\frac{j+1}{n}}-h\Paren{\frac{j}{n}}}\binom{n-1}{j}x^{j}(1-x)^{(n-1)-j}.
\]
Letting
\[
h_n(x):=n\Paren{h\Paren{\Paren{\frac{n-1}{n}}x+\frac{1}{n}}-h\Paren{\Paren{\frac{n-1}{n}}x}},
\]
we can write $B_n'$ as
\[
B_n'(h,x) = \sum_{j=0}^{n-1}h_n\Paren{\frac{j}{n-1}}\binom{n-1}{j}x^{j}(1-x)^{(n-1)-j}=B_{n-1}(h_n,x).
\]
Recall that $h(x)=-x\log x$. After some algebra, we get
\[
h_n(x) = -\log\frac{n-1}{n}+(n-1)\Paren{h\Paren{x+\frac{1}{n-1}}-h(x)}.
\]
Furthermore, using properties of $h(x)$~\cite{entro}, we can bound the absolute difference between $h(x)$ and its Bernstein polynomial as follows.
\begin{Lemma}\label{Bern_error}
For any $m>0$ and $x\in[0,1]$,
\[
-\frac{1-x}{m} \leq B_{m}(h,x)-h(x) \leq 0.
\]
\end{Lemma}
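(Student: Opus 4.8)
I would establish the two inequalities by separate arguments. The upper bound $B_m(h,x)-h(x)\le 0$ is immediate from concavity: writing $B_m(h,x)=\EE[h(N/m)]$ where $N$ is a binomial random variable with parameters $m$ and $x$, we have $\EE[N/m]=x$, and since $h(t)=-t\log t$ is concave on $[0,1]$ (its second derivative is $-1/t<0$), Jensen's inequality gives $\EE[h(N/m)]\le h(\EE[N/m])=h(x)$. The degenerate case $x=0$, where both sides are $0$, is trivial.

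For the lower bound, the plan is to first derive an exact alternative expression for $B_m(h,\cdot)$ in which the singularity of $h$ at the origin disappears. Using the identity $\frac{j}{m}\binom{m}{j}x^{j}(1-x)^{m-j}=x\binom{m-1}{j-1}x^{j-1}(1-x)^{(m-1)-(j-1)}$ for $j\ge1$, substituting $h(j/m)=-\frac{j}{m}\log\frac{j}{m}$, and reindexing by $i=j-1$, one obtains for $x\in(0,1]$
\[
B_m(h,x)=x\cdot\EE\Br{-\log\tfrac{M+1}{m}}=x\Paren{\log m-\EE[\log(M+1)]},
\]
where $M$ is a binomial random variable with parameters $m-1$ and $x$, so $\EE[M+1]=(m-1)x+1$. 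Since $h(x)=-x\log x$, this rewrites the quantity of interest as the clean identity
\[
B_m(h,x)-h(x)=x\Paren{\log(mx)-\EE[\log(M+1)]}.
\]
The point of this rewriting is that $\log(M+1)\ge0$ always, so the awkward behaviour of $-x\log x$ near $0$ is gone.

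It then remains only to upper bound $\EE[\log(M+1)]$. By concavity of $\log$ and Jensen, $\EE[\log(M+1)]\le\log\EE[M+1]=\log((m-1)x+1)$; and a one-line computation gives $\log((m-1)x+1)-\log(mx)=\log\!\Paren{1+\tfrac{1-x}{mx}}\le\tfrac{1-x}{mx}$ via $\log(1+t)\le t$. Substituting back,
\[
B_m(h,x)-h(x)\ge -x\Paren{\log((m-1)x+1)-\log(mx)}\ge -x\cdot\tfrac{1-x}{mx}=-\tfrac{1-x}{m},
\]
and the endpoints $x=0$ and $x=1$ (the latter giving equality $0=0$ once $M=m-1$ almost surely) are handled by direct inspection. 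The only step requiring genuine thought is the representation of $B_m(h,x)$ as a $\mathrm{Bin}(m-1,x)$ expectation: a naive approach bounding the Bernstein error through $\sup_x|h''(x)|$ fails because $h''(x)=-1/x$ blows up as $x\to0$, whereas the reindexing converts the singular term of $h$ into the harmless $\log(M+1)\ge0$. This is essentially the classical sharp bias bound for the plug-in entropy estimator, recorded here with an explicit (slightly loose) constant.
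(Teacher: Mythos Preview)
Your proof is correct. Both the upper bound via Jensen on the concave $h$ and the lower bound via the reindexing identity
\[
B_m(h,x)=x\,\EE\bigl[\log m-\log(M+1)\bigr],\qquad M\sim\mathrm{Bin}(m-1,x),
\]
followed by Jensen on $\log$ and $\log(1+t)\le t$, go through cleanly.

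The paper does not actually prove Lemma~\ref{Bern_error} at the point where it is stated; it simply cites the reference \cite{entro}. However, the paper does implicitly establish the same bound later in Section~\ref{Bias_large}, by a different route: there it uses the elementary inequality $0\le t\log t-(t-1)\le(t-1)^2$ for $t\ge0$, writes $h(x)-\EE[h(\hat p)]=x\,\EE[T\log T]$ with $T=\hat p/x$, and then applies $\EE[T]=1$, $\Var(T)=(1-x)/(mx)$ to obtain $0\le h(x)-B_m(h,x)\le(1-x)/m$. So the paper's argument hinges on a pointwise second-order bound for $t\log t$, whereas your argument hinges on the combinatorial identity $\tfrac{j}{m}\binom{m}{j}=\binom{m-1}{j-1}$ that shifts $\mathrm{Bin}(m,x)$ to $\mathrm{Bin}(m-1,x)$ and removes the singularity of $h$ at $0$. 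Both approaches are short; yours is perhaps more transparent about why the singularity of $h''$ near $0$ does not hurt, while the paper's $t\log t$ inequality is a reusable tool that it also deploys elsewhere (for the Poisson-sampling bias in Section~\ref{Bias_large}).
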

As an immediate corollary, 
\begin{Corollary}\label{Bern_error_h_n}
For $x\in [0, 1-(n-1)^{-1}]$,
\[
|B_n'(h,x)-h_n(x)|=|B_{n-1}(h_n,x)-h_n(x)|\leq 1.
\]
\end{Corollary}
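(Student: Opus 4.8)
The plan is to reduce the claim, via the identity $B_n'(h,x)=B_{n-1}(h_n,x)$ from Section~\ref{der_Bern}, to the two–sided bound $0\le B_{n-1}(h_n,x)-h_n(x)\le 1$, and to establish the two sides by different (but both elementary) means. Throughout I regard $h(y)=-y\log y$ as defined on all of $(0,\infty)$; the restriction $x\le 1-(n-1)^{-1}$ in the statement is exactly what keeps the shifted argument $x+\tfrac1{n-1}$ inside $[0,1]$. Starting from the closed form $h_n(x)=-\log\frac{n-1}{n}+(n-1)\bigl(h(x+\tfrac1{n-1})-h(x)\bigr)$ and writing $g(y):=h(y+\tfrac1{n-1})$, linearity of the Bernstein operator together with the fact that it fixes constants gives
\[
B_{n-1}(h_n,x)-h_n(x)=(n-1)\bigl[(B_{n-1}(g,x)-g(x))-(B_{n-1}(h,x)-h(x))\bigr].
\]

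For the upper bound I would combine two one–sided estimates. Since $g''(y)=-(y+\tfrac1{n-1})^{-1}<0$, the function $g$ is concave on $[0,1]$, so Jensen's inequality applied to $B_{n-1}(g,x)=\EE_{M\sim\mathrm{Bin}(n-1,x)}\bigl[g\bigl(\tfrac{M}{n-1}\bigr)\bigr]$ gives $B_{n-1}(g,x)-g(x)\le 0$; and Lemma~\ref{Bern_error} with $m=n-1$ gives $B_{n-1}(h,x)-h(x)\ge-\tfrac{1-x}{n-1}$, hence $-(n-1)\bigl(B_{n-1}(h,x)-h(x)\bigr)\le 1-x$. Adding, $B_{n-1}(h_n,x)-h_n(x)\le 1-x\le 1$. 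For the lower bound I would instead note that $g-h$ is \emph{convex}, since $(g-h)''(y)=\tfrac1y-\tfrac1{y+1/(n-1)}>0$; consequently $h_n$ is convex on $[0,1]$, and Jensen's inequality in the reverse direction gives $B_{n-1}(h_n,x)=\EE\bigl[h_n\bigl(\tfrac{M}{n-1}\bigr)\bigr]\ge h_n(x)$. Together these yield $\Abs{B_n'(h,x)-h_n(x)}=\Abs{B_{n-1}(h_n,x)-h_n(x)}\le 1$.

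The point that needs a little care — and the only place where a naive approach loses — is that Lemma~\ref{Bern_error} is an \emph{asymmetric}, one–sided estimate, stated for $h$ and not for its shift $g$; bounding $\Abs{B_{n-1}(g,x)-g(x)}$ and $\Abs{B_{n-1}(h,x)-h(x)}$ separately by $\tfrac{1-x}{n-1}$ and applying the triangle inequality would only give the constant $2$. The saving is that the two Bernstein errors have the \emph{same} sign (both $\le 0$) and their difference equals $B_{n-1}(g-h,x)-(g-h)(x)\ge 0$ by convexity of $g-h$, so they partially cancel; equivalently, routing the upper bound entirely through the strong side of Lemma~\ref{Bern_error} (via concavity of $g$) and the lower bound entirely through convexity of $g-h$ is what yields the sharp constant $1$. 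Beyond bookkeeping of these signs and the endpoint convention for $h$, I do not anticipate a genuine obstacle.
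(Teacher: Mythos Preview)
Your argument is correct and starts from the same decomposition the paper uses. The paper proceeds by applying Lemma~\ref{Bern_error} to \emph{both} bracketed terms---writing the shifted one as $B_{n-1}(h,x+(n-1)^{-1})-h(x+(n-1)^{-1})$---so that each lies in an interval $[-\tfrac{1-\,\cdot\,}{\,n-1\,},0]$ and the absolute value of their difference is bounded by the larger left endpoint, giving $1-x\le 1$. You instead reserve Lemma~\ref{Bern_error} for $h$ alone and handle the shifted piece via Jensen: concavity of $g$ gives the upper bound, and the observation that $h_n=(n-1)(g-h)+\text{const}$ is convex gives the lower bound $B_{n-1}(h_n,x)\ge h_n(x)$ directly. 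This is slightly more self-contained, since it does not rely on transferring the two-sided estimate of Lemma~\ref{Bern_error} to the shift $g$ (a step the paper takes without comment); the paper's route is terser but leans on that identification. Either way the mechanism is the same---both Bernstein errors are nonpositive, so they partially cancel rather than add---and both routes reach the sharp constant~$1$.
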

\begin{proof}
By the equality $B_n'(h,x)=B_{n-1}(h_n,x)$ and Lemma~\ref{Bern_error}, for $x\in [0, 1-(n-1)^{-1}]$,
\begin{align*}
|B_{n-1}(h_n,x)-h_n(x)|
& \leq (n-1)|(B_{n-1}(h,x+(n-1)^{-1})-h(x+(n-1)^{-1}))\\&-(B_{n-1}(h,x)-h(x))|\\
& \leq (n-1)\left|\max\left\{\frac{1-x-(n-1)^{-1}}{n-1},\frac{1-x}{n-1}\right\}\right|\\
& \leq 1. 
\end{align*}
\end{proof}

\subsection{Approximating the derivative function}\label{approx_der}
Denote the degree-$d$ min-max polynomial of $h$ over $[0, 1]$ by
\[
\tilde{h}(x):= \sum_{j=0}^{d} b_j x^j.
\]
As shown in~\cite{mmentro}, the coefficients of $\tilde{h}(x)$ satisfy
\[
|b_j|\leq \mathcal{O}(2^{3d}),
\]
and the error of $\tilde{h}(x)$ in approximating $h(x)$ are bounded as
\[
\max_{x\in [0,1]}|h(x)-\tilde{h}(x)|\leq \mathcal{O}\Paren{\frac{1}{\log^2 n}}.
\]
By a change of variables, the degree-$d$ min-max polynomial of $h$ over $I_n:=[0,c_l\log n/n]$ is 
\[
\tilde{h}_1(x):= \sum_{j=0}^{d} b_j \Paren{\frac{n}{c_l\log n}}^{j-1}x^j+\Paren{\log\frac{n}{c_l\log n}}x.
\]
Correspondingly, for any $x\in I_n$, we have
\[
\max_{x\in I_n}|h(x)-\tilde{h}_1(x)|\leq \mathcal{O}\Paren{\frac{1}{n\log n}}.
\]
To approximate $h_{na}(x)$,
we approximate $h(x)$ by $\tilde{h}_1(x)$, 
and ${h}(x+(na-1)^{-1})$ by $\tilde{h}_1(x+(na-1)^{-1})$.
The resulting polynomial is
\begin{align*}
\tilde{h}_{na}(x)
&:=-\log\frac{na-1}{na}+(na-1)\Paren{\tilde{h}_1(x+(na-1)^{-1})-\tilde{h}_1(x)}\\
&=-\log\frac{na-1}{c_l a \log n}+(na-1)\Paren{ \sum_{j=0}^{d} b_j \Paren{\frac{n}{c_l\log n}}^{j-1}\Paren{\Paren{x+\frac{1}{na-1}}^j-x^j}}.
\end{align*}
By the above reasoning, the error of $\tilde{h}_{na}$ in approximating $h_{na}$ over $I_n$ satisfies 
\[
\max_{x\in I_n}|h_{na}(x)-\tilde{h}_{na}(x)|\leq \mathcal{O}\Paren{\frac{na}{n\log n}}\leq \mathcal{O}\Paren{\varepsilon}.
\]
Moreover, by Corollary~\ref{Bern_error_h_n}, 
\[
\max_{x\in [0,1/2]}|B_{na}'(h,x)-h_{na}(x)|=\max_{x\in [0,1/2]}|B_{na-1}(h_{na},x)-h_{na}(x)|\leq 1.
\]
The triangle inequality combines the above two inequalities and yields
\[
\max_{x\in I_n}|B_{na}'(h,x)-\tilde{h}_{na}(x)|\leq 1+\mathcal{O}\Paren{\varepsilon}.
\]
Therefore, denoting
\[
\tilde{H}_{na}(x):= \int_0^{x}\tilde{h}_{na}(t) dt,
\]
and noting that $B_{na}(h,0)=0$, we have
\begin{Lemma}\label{Est_Error}
For any $x\in I_n$, 
\[
|B_{na}(h,x)-\tilde{H}_{na}(x)|\leq \int_{0}^{x} |B_{na}'(h,t)-\tilde{h}_{na}(t)| dt\leq x\Paren{1+\mathcal{O}\Paren{\varepsilon}}.
\]
\end{Lemma}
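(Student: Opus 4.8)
The plan is to obtain the two-sided chain from the fundamental theorem of calculus together with the uniform derivative bound established just before the lemma. First I would record that both functions appearing in the difference vanish at the left endpoint of $I_n$: since $h(0)=0$ and a Bernstein polynomial interpolates its function at the endpoints, $B_{na}(h,0)=h(0)=0$; and by the very definition $\tilde{H}_{na}(0)=\int_0^0\tilde{h}_{na}(t)\,dt=0$. Since $\tilde{H}_{na}$ is an antiderivative of $\tilde{h}_{na}$ by construction, and $B_{na}(h,\cdot)$ is a polynomial with derivative $B_{na}'(h,\cdot)$, for every $x\in I_n$ I can write
\[
B_{na}(h,x)-\tilde{H}_{na}(x)=\int_0^x\Paren{B_{na}'(h,t)-\tilde{h}_{na}(t)}\,dt,
\]
which is what links the two outer quantities in the statement once absolute values are taken.

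Next I would apply the triangle inequality for integrals — legitimate because $x\ge 0$ for every $x\in I_n$ — to obtain the first inequality,
\[
\Abs{B_{na}(h,x)-\tilde{H}_{na}(x)}\le\int_0^x\Abs{B_{na}'(h,t)-\tilde{h}_{na}(t)}\,dt.
\]
For the second inequality I would invoke the uniform bound proved immediately above, $\max_{t\in I_n}\Abs{B_{na}'(h,t)-\tilde{h}_{na}(t)}\le 1+\mathcal{O}(\varepsilon)$. The key point to note is that for $x\in I_n$ the entire interval of integration $[0,x]$ lies inside $I_n$, so the integrand is bounded by $1+\mathcal{O}(\varepsilon)$ throughout, yielding $\int_0^x\Abs{B_{na}'(h,t)-\tilde{h}_{na}(t)}\,dt\le x\Paren{1+\mathcal{O}(\varepsilon)}$.

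There is essentially no obstacle here: the lemma is a one-line integration argument once the derivative-level estimate $\Abs{B_{na}'(h,t)-\tilde{h}_{na}(t)}\le 1+\mathcal{O}(\varepsilon)$ on $I_n$ is in hand. The only points needing a moment's care are the two boundary evaluations at $0$ (so the constant of integration drops out) and the observation that $[0,x]\subseteq I_n$, so the uniform bound applies on the whole range of integration rather than just at the right endpoint. The substantive work was done earlier — relating $B_{na}'(h,\cdot)$ to $h_{na}$ via Corollary~\ref{Bern_error_h_n}, and approximating $h_{na}$ by the low-degree polynomial $\tilde{h}_{na}$ assembled from min-max polynomials of $h$ — and this lemma merely transfers that integrated derivative estimate to an estimate on the antiderivatives themselves.
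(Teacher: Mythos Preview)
Your proposal is correct and matches the paper's approach exactly: the paper likewise notes $B_{na}(h,0)=0$, then obtains the chain of inequalities by integrating the uniform derivative bound $\max_{t\in I_n}|B_{na}'(h,t)-\tilde{h}_{na}(t)|\le 1+\mathcal{O}(\varepsilon)$ over $[0,x]$. There is nothing to add.
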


\section{A competitive entropy estimator}\label{Est_const}
In this section, we design an explicit entropy estimator $\hat{H}$ based on $\tilde{H}_{na}$ and the empirical estimator.
Note that $\tilde{H}_{na}(x)$ is a polynomial with zero constant term. For $t\geq 1$, denote
\[
g_t:=\sum_{j=t}^{d} \frac{b_j}{j+1}\Paren{\frac{n}{c_l\log n}}^{j-1} \Paren{\frac{1}{na-1}}^{j-t} \binom{j+1}{j-t+1}.
\]
Setting $b_t'=g_t$ for $t\geq 2$ and $b_1'={g_{1}}-{\log\frac{na-1}{c_l a \log n}}$, we have the following lemma.
\begin{Lemma}\label{approx_coeff}
The function $\tilde{H}_{na}(x)$ can be written as
\[
\tilde{H}_{na}(x) =  \sum_{t=1}^{d} b_t' x^t.
\]
In addition, its coefficients satisfy
\[
|b_t'|\leq \Paren{\frac{n}{c_l\log n}}^{t-1} \mathcal{O}(2^{4d}).
\]
\end{Lemma}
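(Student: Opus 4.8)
The plan is to prove the identity by integrating the closed form of $\tilde h_{na}$ term by term and re-indexing, and then to control the coefficients using the bound $|b_j|\le\mathcal{O}(2^{3d})$ on the min-max coefficients recalled in Section~\ref{approx_der}.

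For the identity, abbreviate $c:=1/(na-1)$ and start from $\tilde h_{na}(x)=-\log\frac{na-1}{c_l a\log n}+(na-1)\sum_{j=0}^{d}b_j\Paren{\frac{n}{c_l\log n}}^{j-1}\Paren{(x+c)^j-x^j}$. Since $(x+c)^0-x^0=0$ the $j=0$ term drops, and since $\tilde H_{na}(x)=\int_0^x\tilde h_{na}(t)\,dt$ with $\tilde H_{na}(0)=0$, it has no constant term. Using $\int_0^x\Paren{(t+c)^j-t^j}\,dt=\frac{(x+c)^{j+1}-c^{j+1}-x^{j+1}}{j+1}$ and $(x+c)^{j+1}=\sum_{i=0}^{j+1}\binom{j+1}{i}x^ic^{j+1-i}$, the $i=0$ and $i=j+1$ terms cancel $-c^{j+1}$ and $-x^{j+1}$, leaving $\sum_{i=1}^{j}\binom{j+1}{i}x^ic^{j+1-i}$. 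Multiplying by $(na-1)$ replaces $c^{j+1-i}$ by $\Paren{\frac{1}{na-1}}^{j-i}$, and since $\binom{j+1}{i}=\binom{j+1}{j-i+1}$, interchanging the order of summation identifies the coefficient of $x^t$ arising from the sum as precisely $g_t$, for each $t\in\{1,\dots,d\}$. The only other contribution is the linear term $-\log\frac{na-1}{c_l a\log n}\,x$ from integrating the constant, which affects $x^1$ alone; this gives $b_1'=g_1-\log\frac{na-1}{c_l a\log n}$ and $b_t'=g_t$ for $t\ge2$.

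For the coefficient bound, factor $\Paren{\frac{n}{c_l\log n}}^{j-1}=\Paren{\frac{n}{c_l\log n}}^{t-1}\Paren{\frac{n}{c_l\log n}}^{j-t}$ in $g_t$, so that $|g_t|\le\Paren{\frac{n}{c_l\log n}}^{t-1}\sum_{j=t}^{d}\frac{|b_j|}{j+1}\Paren{\frac{n}{(c_l\log n)(na-1)}}^{j-t}\binom{j+1}{j-t+1}$. Bound $|b_j|\le\mathcal{O}(2^{3d})$, $\binom{j+1}{j-t+1}\le2^{j+1}\le2^{d+1}$, and --- for $n$ large enough given $\varepsilon$, the remaining range being covered by the trivial $\mathcal{O}(1)$ slack of Theorem~\ref{thm5} --- $\frac{n}{(c_l\log n)(na-1)}\le\frac{2}{c_l\varepsilon\log^2 n}\le1$. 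The $j=t$ summand is then at most $|b_t|\le\mathcal{O}(2^{3d})$, while each of the $\le d$ summands with $j>t$ carries an additional factor $\le\frac{2}{c_l\varepsilon\log^2 n}$, so their total is $\le\frac{2d}{c_l\varepsilon\log^2 n}\,\mathcal{O}(2^{4d})=o(1)\cdot\mathcal{O}(2^{4d})$ since $d=\Theta(\log n)$. Hence $|g_t|\le\Paren{\frac{n}{c_l\log n}}^{t-1}\mathcal{O}(2^{4d})$, and for $t=1$ the extra $\Abs{\log\frac{na-1}{c_l a\log n}}=\mathcal{O}(\log n)\le\mathcal{O}(2^{4d})$ is harmlessly absorbed.

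The main obstacle is bookkeeping rather than depth: performing the binomial re-indexing after integration so that the surviving powers (and the vanishing of the constant term) are tracked correctly, and verifying that the geometric ratio $\frac{n}{(c_l\log n)(na-1)}$ is genuinely $\le1$ --- i.e., that the finite-difference step $1/(na-1)$ is small compared with the length $c_l\log n/n$ of $I_n$. That last point is exactly where the slack between $2^{3d}$ and $2^{4d}$ (a factor $2^d\gg d$) is spent, so one must check that summing the $d$ per-coefficient contributions does not push the bound past $2^{4d}$.
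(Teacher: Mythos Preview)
Your proof is correct and follows essentially the same route as the paper: the identity is obtained by integrating $\tilde h_{na}$ term by term, expanding $(x+c)^{j+1}$ via the binomial theorem, and re-indexing to read off $g_t$; the coefficient bound then combines $|b_j|\le\mathcal{O}(2^{3d})$ with a $2^{d}$ bound on the binomial factor and the observation that the ratio $\frac{n}{(c_l\log n)(na-1)}\le 1$. The only cosmetic difference is that the paper bounds all summands uniformly (using $1/(na-1)\le 1/n$ and $1/(c_l\log n)\le 1$, then $\sum_{j=t}^d\binom{j}{t}\le 2^{d+1}$), whereas you isolate the $j=t$ term and exploit the extra smallness of the ratio on the $j>t$ terms --- both arrive at the same $\mathcal{O}(2^{4d})$ headroom.
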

The proof of the above lemma is delayed to the end of this section.

To simplify our analysis and remove the dependency between the counts $N_i$, we use the conventional \emph{Poisson sampling} technique~\cite{mmentro,mmcover}. Specifically, instead of drawing exactly $n$ samples, we make the sample size an independent Poisson random variable $N$ with mean $n$. This does not change the statistical natural of the problem as $N\sim \Poi(n)$ highly concentrates around its mean (see Lemma~\ref{tailprob}). We still define $N_i$ as the counts of symbol $i$ in $X^N$. Due to Poisson sampling, these counts are now independent, and satisfy $N_i\sim\Poi(np_i), \ \forall i\in[k]$.

For each $i\in[k]$, let $N_i^{\underline{t}}:=\prod_{m=0}^{t-1}(N_i-m)$ be the order-$t$ falling factorial of $N_i$. The following identity is well-known:
\[
\EE[N_i^{\underline{t}}]=(np_i)^t,\ \forall t\leq n.
\]
Note that for sufficiently small $c_s$, the degree parameter $d=c_s\log n\leq n, \forall n$.
By the linearity of expectation, the unbiased estimator of $\tilde{H}_{na}(p_i)$ is 
\[
\hat{H}_{na}(N_i):= \sum_{t=1}^{d} b_t' \frac{N_i^{\underline{t}}}{n^t}.
\]
Let $N'$ be an independent Poisson random variable with mean $n$, and $X^{N'}$ be an independent length-${N'}$ sample sequence drawn from $\vec{p}$. Analogously, we denote by $N_i'$ the number of times that symbol $i\in[k]$ appears. Depending on whether $N_i'> \varepsilon^{-1}$ or not, we classify $p_i, i\in[k]$ into two categories: small- and large- probabilities. For small probabilities, we apply a simple variant of $\hat{H}_{na}(N_i)$; for large probabilities, we estimate $h(p_i)$ by essentially the empirical estimator. Specifically, for each $i\in[k]$, we estimate $h(p_i)$ by 
\[
\hat{h}(N_i,N_i'):= \hat{H}_{na}(N_i)\cdot \indic_{N_i\leq c_l\log n}\cdot \indic_{N_i'\leq \varepsilon^{-1}}+h\Paren{\frac{N_i}{n}}\cdot\indic_{N_i'> \varepsilon^{-1}}.
\]
Consequently, we approximate $H(\vec{p})$ by
\[
\hat{H}(X^N,X^{N'}):= \sum_{i\in[k]} \hat{h}(N_i,N_i').
\]
For the simplicity of illustration, we will refer to 
\[
\hat{H}_S(X^N,X^{N'}):=\sum_{i\in[k]} \hat{H}_{na}(N_i)\cdot \indic_{N_i\leq c_l\log n}\cdot \indic_{N_i'\leq \varepsilon^{-1}}
\] 
as the \emph{small-probability estimator}, and 
\[
\hat{H}_L(X^N,X^{N'}):=\sum_{i\in[k]} h\Paren{\frac{N_i}{n}}\cdot\indic_{N_i'> \varepsilon^{-1}}
\] 
as the \emph{large-probability estimator}. Clearly, $\hat{H}$ is the sum of these two estimators.

In the next two sections, we analyze the bias and mean absolute deviation of $\hat{H}$. In Section~\ref{Bias_bound}, we show that for any $\vec{p}$, the absolute bias of $\hat{H}$ satisfies
\[
\Abs{\EE[\hat{H}(X^N,X^{N'})]-H(\vec{p})}\leq \Abs{\text{Bias}(\hat{H}^E, na)}+\Paren{1+\mathcal{O}\Paren{\varepsilon}}\Paren{1\land(\varepsilon^{-1}+1)\frac{S_{\vec{p}}}{n}}.
\]
In Section~\ref{Mean_H}, we show that the mean absolute deviation of $\hat{H}$ satisfies 
\[
\EE\Abs{\hat{H}(X^N,X^{N'})-\EE[\hat{H}(X^N,X^{N'})]}\leq \mathcal{O}\Paren{\frac{1}{n^{1-\Theta(c_s)}}}.
\]
For sufficiently small $c_s$, the triangle inequality combines the above inequalities and yields 
\[
\EE\Abs{\hat{H}(X^N,X^{N'})-H(\vec{p})}\leq \Abs{\text{Bias}(\hat{H}^E, na)}+\Paren{1+c\cdot \varepsilon}\land \Paren{\frac{S_{\vec{p}}}{\varepsilon n}+\frac{1}{n^{0.49}}}.
\]
This basically completes the proof of Theorem~\ref{thm5}. 
\subsection*{Proof of Lemma~\ref{approx_coeff}}
We begin by proving the first claim:
\[
\tilde{H}_{na}(x) =  -\sum_{t=1}^{d} b_t' x^t.
\]
By definition, $\tilde{H}_{na}(x)$ satisfies
\begin{align*}
&\tilde{H}_{na}(x)+\Paren{\log\frac{na-1}{c_l a \log n}}x\\
&=(na-1)\Paren{\sum_{j=1}^{d} \frac{b_j}{j+1} \Paren{\frac{n}{c_l\log n}}^{j-1}\Paren{\Paren{x+\frac{1}{na-1}}^{j+1}-\Paren{\frac{1}{na-1}}^{j+1}-x^{j+1}}}\\
&=\sum_{j=1}^{d} \frac{b_j}{j+1} \Paren{\frac{n}{c_l\log n}}^{j-1}\Paren{\sum_{m=0}^{j-1}\Paren{\frac{1}{na-1}}^{m}x^{j-m}\binom{j+1}{m+1}}\\
&=\sum_{t=1}^{d}x^t \Paren{\sum_{j=t}^{d} \frac{b_j}{j+1}\Paren{\frac{n}{c_l\log n}}^{j-1} \Paren{\frac{1}{na-1}}^{j-t} \binom{j+1}{j-t+1}}.
\end{align*}
The last step follows by reorganizing the indices. 

Next we prove the second claim. Recall that $d = c_s\log n$, thus
\[
\log\frac{na-1}{c_l a \log n}\leq \mathcal{O}(2^{4d}).
\]
Since $b_t'=g_t$ for $t\geq 2$ and $b_1'={g_{1}}-{\log\frac{na-1}{c_l a \log n}}$, it suffices to bound the magnitude of $g_t$:
\begin{align*}
\Abs{g_t}
&\leq \sum_{j=t}^{d} \frac{\Abs{b_j}}{j+1}\Paren{\frac{n}{c_l\log n}}^{j-1} \Paren{\frac{1}{na-1}}^{j-t} \binom{j+1}{j-t+1}\\
&\leq \sum_{j=t}^{d} \Abs{b_j} \Paren{\frac{1}{c_l\log n}}^{j-1} n^{t-1}\binom{j}{t}\\
&\leq \Paren{\frac{n}{c_l\log n}}^{t-1} \sum_{j=t}^{d} \Abs{b_j} \binom{j}{t}\\
&\leq \Paren{\frac{n}{c_l\log n}}^{t-1} \sum_{j=t}^{d} \Abs{b_j} \binom{d}{j-t}\\
&\leq \Paren{\frac{n}{c_l\log n}}^{t-1} \mathcal{O}(2^{4d}).
\end{align*}

\section{Bounding the bias of $\boldsymbol{\hat{H}}$}\label{Bias_bound}
By the triangle inequality, the absolute bias of $\hat{H}$ in estimating $H(\vec{p})$ satisfies
\begin{align*}
\Abs{\sum_{i\in[k]}(\EE[\hat{h}(N_i,N_i')]-h(p_i))}
&\leq\Abs{\sum_{i\in[k]}(B_{na}(h,p_i)-h(p_i))} \\&+\Abs{\sum_{i\in[k]}(\EE[\hat{h}(N_i,N_i')]-B_{na}(h,p_i))}.
\end{align*}

Note that the first term on the right-hand side is the absolute bias of the empirical estimator with sample size $na=\varepsilon n\log n$, i.e.,
\[
\Abs{\text{Bias}(\hat{H}^E, na)}=\Abs{\sum_{i\in[k]}(B_{na}(h,p_i)-h(p_i))}.
\]
Hence, we only need to consider the second term on the right-hand side, which admits
\[
 \Abs{\sum_{i\in[k]}(\EE[\hat{h}(N_i,N_i')]-B_{na}(h,p_i))}\leq \text{Bias}_S+ \text{Bias}_L,
\]
where 
\[
\text{Bias}_S:=\left| \sum_{i\in[k]}\EE\left[\Paren{\hat{H}_{na}(N_i)\cdot \indic_{N_i\leq c_l\log n}-B_{na}(h,p_i)}\cdot \indic_{N_i'\leq \varepsilon^{-1}}\right]\right|
\]
is the absolute bias of the small-probability estimator, and
\[
\text{Bias}_L:=\left|\sum_{i\in[k]}\EE\left[\Paren{h\Paren{\frac{N_i}{n}}-B_{na}(h,p_i)}\cdot\indic_{N_i'> \varepsilon^{-1}}\right]\right|
\]
is the absolute bias of the large-probability estimator.

Assume that $c_l$ is sufficiently large. In Section~\ref{Bias_small}, we bound the small-probability bias by
\[
|\text{Bias}_S|\leq \Paren{1+\mathcal{O}\Paren{\varepsilon}}\Paren{1\land (\varepsilon^{-1}+1)\frac{S_{\vec{p}}}{n}}.
\]
In Section~\ref{Bias_large}, we bound the large-probability bias by
\[
|\text{Bias}_L|\leq 2\Paren{\varepsilon\land \frac{S_{\vec{p}}}{n}}.
\]
\subsection{Bias of the small-probability estimator}\label{Bias_small}
We first consider the quantity $\text{Bias}_S$. By the triangle inequality, 
\begin{align*}
\text{Bias}_S 
&\leq \sum_{i: p_i\not\in I_n}\left|\EE[\hat{H}_{na}(N_i)\cdot \indic_{N_i\leq c_l\log n}]-B_{na}(h,p_i)\right|\cdot \EE[\indic_{N_i'\leq \varepsilon^{-1}}] \\
&+\sum_{i: p_i\in I_n} \left|\EE\left[\hat{H}_{na}(N_i)\right]-B_{na}(h,p_i)\right|\cdot \EE[\indic_{N_i'\leq \varepsilon^{-1}}]\\
&+\sum_{i: p_i\in I_n} \left| \EE\left[\hat{H}_{na}(N_i)\cdot \indic_{N_i> c_l \log n}\right]\cdot \EE\left[\indic_{N_i'\leq \varepsilon^{-1}}\right]\right|.
\end{align*}
Assume $\varepsilon\log n \ge 1$  and consider the first sum on the right-hand side. By the general reasoning in the proof of Lemma~\ref{Bias_extra}, we have the following result:
 \[
 \hat{H}_{na}(N_i)\cdot \indic_{N_i\leq c_l\log n}\leq \mathcal{O}(2^{5d}) \frac{\log^2 n}{n}.
 \]
Further assume that $c_s$ and $c_l$ are sufficiently small and large, respectively. For large enough $n$, the above inequality bounds the first sum by
\[
\sum_{i: p_i\not\in I_n}\left|{\hat{H}_{na}(N_i)\cdot \indic_{N_i\leq c_l\log n}-B_{na}(h,p_i)}\right|\cdot \EE[\indic_{N_i'\leq \varepsilon^{-1}}]
\leq \sum_{i: p_i\not\in I_n}\EE[\indic_{N_i'\leq \varepsilon^{-1}}]
\leq \frac{1}{n^5}\cdot \frac{n}{c_l\log n}\leq \frac{1}{n^4}.
\]
For the second sum on the right-hand side, by Lemma~\ref{Est_Error},
\begin{align*}
\sum_{i: p_i\in I_n} \left|\EE\left[\hat{H}_{na}(N_i)\right]-B_{na}(h,p_i)\right|\cdot \EE[\indic_{N_i'\leq \varepsilon^{-1}}]
& \leq \sum_{i: p_i\in I_n}  \left|\EE\left[\hat{H}_{na}(N_i)\right]-B_{na}(h,p_i)\right|\cdot \EE[\indic_{N_i'\leq \varepsilon^{-1}}]\\
& =\sum_{i: p_i\in I_n}   \left|\tilde{H}_{na}(p_i)-B_{na}(h,p_i)\right|\cdot \EE[\indic_{N_i'\leq \varepsilon^{-1}}]\\
& \leq \sum_{i: p_i\in I_n}  \Paren{1+\mathcal{O}\Paren{\varepsilon}}  p_i\cdot \EE[\indic_{N_i'\leq \varepsilon^{-1}}]\\
& \leq \Paren{1+\mathcal{O}\Paren{\varepsilon}}\Paren{1\land (\varepsilon^{-1}+1)\frac{S_{\vec{p}}}{n}}.
\end{align*}
The following lemma bounds the last sum and completes our argument.
\begin{Lemma}\label{Bias_extra}
For sufficiently large $c_l$,
\[
\sum_{i\in[k]} \left| \EE\left[\hat{H}_{na}(N_i)\cdot \indic_{N_i> c_l\log n}\right]\cdot \EE\left[\indic_{N_i'\leq \varepsilon^{-1}}\right]\right|\leq \frac{1}{n^5}.
\]
\end{Lemma}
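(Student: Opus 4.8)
The plan is to bound each summand separately using the fact that $\hat{H}_{na}(N_i)$ is a fixed polynomial of degree $d=c_s\log n$ in $N_i$ with coefficients controlled by Lemma~\ref{approx_coeff}, together with Poisson tail bounds from Lemma~\ref{tailprob}. First I would observe that on the event $\{N_i>c_l\log n\}$ the falling factorials satisfy $N_i^{\underline{t}}\le N_i^t$, so using $|b_t'|\le (n/(c_l\log n))^{t-1}\mathcal{O}(2^{4d})$ we get the crude pointwise bound
\[
\Abs{\hat{H}_{na}(N_i)}\le \mathcal{O}(2^{4d})\sum_{t=1}^{d}\Paren{\frac{1}{c_l\log n}}^{t-1}\frac{N_i^{t}}{n}\le \mathcal{O}(2^{5d})\frac{N_i^{d}}{n}
\]
whenever $N_i\ge c_l\log n$ with $c_l$ large (so the geometric-type sum is dominated by its top term). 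Multiplying by $\indic_{N_i>c_l\log n}$ and taking expectations reduces everything to controlling $\EE[N_i^{d}\indic_{N_i>c_l\log n}]$ for a $\Poi(np_i)$ variable.

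The key step is to split into two regimes according to the size of $\lambda_i:=np_i$. When $\lambda_i\le c_l\log n/2$, the event $\{N_i>c_l\log n\}$ is a large-deviation event: by Lemma~\ref{tailprob} with $\delta$ chosen so that $(1+\delta)\lambda_i=c_l\log n$, the probability decays like $n^{-\Omega(c_l)}$, and since even on this event $N_i^d$ is at most a polynomial blow-up, the whole contribution is $\le n^{-\Omega(c_l)}$ per symbol after a standard Poisson moment/tail estimate (e.g. writing $\EE[N_i^d\indic_{N_i>c_l\log n}]$ as a sum $\sum_{j>c_l\log n} j^d e^{-\lambda_i}\lambda_i^j/j!$ and bounding the tail by a geometric series once $j$ exceeds a constant multiple of $\max\{\lambda_i,d\}$). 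Summing over the at most $k\le n^{O(1)}$ symbols — or more carefully, noting $\sum_i\lambda_i=n$ so at most $2n/(c_l\log n)$ symbols can have $\lambda_i> c_l\log n/2$ — and taking $c_l$ large enough to beat all the polynomial factors ($2^{5d}=n^{O(c_s)}$, the $N_i^d$ blow-up, and the symbol count) gives a total of at most $n^{-5}$. In the complementary regime $\lambda_i>c_l\log n/2$ we do \emph{not} have a small tail probability, but here we instead use the indicator $\indic_{N_i'\le\varepsilon^{-1}}$: since $N_i'\sim\Poi(\lambda_i)$ is independent of $N_i$ and $\lambda_i$ is large, $\PP(N_i'\le\varepsilon^{-1})\le\PP(N_i'\le c_l\log n/2)\le e^{-\Omega(\lambda_i)}\le n^{-\Omega(c_l)}$ by the lower-tail bound in Lemma~\ref{tailprob} (using $\varepsilon^{-1}\le\log n\le\lambda_i/2$ for $\varepsilon\log n\ge1$ and $c_l$ large); this factor, which multiplies the summand, kills the contribution of the few large-$\lambda_i$ symbols even though $\EE[N_i^d]\approx\lambda_i^d$ can be large — it is bounded by $\lambda_i^d\cdot n^{-\Omega(c_l)}\le n^{O(1)}n^{-\Omega(c_l)}$.

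The main obstacle is the bookkeeping of constants: one must choose $c_s$ small and $c_l$ large in the right order so that the exponent $\Omega(c_l)$ coming from the Poisson tails strictly dominates the sum of all polynomial-in-$n$ losses, namely $2^{5d}=2^{5c_s\log n}=n^{5c_s/\ln 2}$, the moment blow-up $N_i^d\le (c_l\log n)^{O(1)}\cdot$(tail terms) after the geometric-series step, and the symbol count. Since $c_s$ is fixed independently of $c_l$, one first picks any $c_s$, then picks $c_l$ large enough that $\Omega(c_l)-O(c_s)\ge 6$, say, leaving room for the $n/(c_l\log n)$ factor from counting symbols; this yields the stated $n^{-5}$ bound. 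The only genuinely delicate point is making sure the independence of $X^N$ and $X^{N'}$ is used correctly so that $\EE[\hat{H}_{na}(N_i)\indic_{N_i>c_l\log n}]\cdot\EE[\indic_{N_i'\le\varepsilon^{-1}}]$ factors as written, which it does by Poisson sampling.
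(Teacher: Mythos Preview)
Your two–regime split (small vs.\ large $\lambda_i:=np_i$) and your use of the independent sample $X^{N'}$ to kill the large-$\lambda_i$ terms are exactly the ideas the paper uses. The gap is in your pointwise bound on $\hat{H}_{na}(N_i)$. From $|b_t'|\le(n/(c_l\log n))^{t-1}\mathcal{O}(2^{4d})$ one indeed gets
\[
|\hat{H}_{na}(N_i)|\le\mathcal{O}(2^{4d})\,\frac{N_i}{n}\sum_{t=1}^{d}\Paren{\frac{N_i}{c_l\log n}}^{t-1},
\]
but you then pass to $\mathcal{O}(2^{5d})N_i^{d}/n$, discarding the factor $(c_l\log n)^{1-d}$. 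Since $d=c_s\log n$, on the event $\{N_i>c_l\log n\}$ the quantity $N_i^{d}$ is at least $(c_l\log n)^{c_s\log n}=n^{c_s\log(c_l\log n)}$, which is \emph{super-polynomial} in $n$; no fixed $c_l$ can make the Poisson tail $n^{-\Theta(c_l)}$ dominate it. Your assertion that this is ``at most a polynomial blow-up'' (and later ``$(c_l\log n)^{O(1)}$'') overlooks that the exponent $d$ itself grows with $n$. The same problem appears in your large-$\lambda_i$ regime, where $\lambda_i^{d}$ can be as large as $n^{c_s\log n}$.

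The paper circumvents this by never raising $N_i$ to the $d$th power. It slices $\{N_i>c_l\log n\}$ into shells $\{c_lj\log n<N_i\le c_l(j+1)\log n\}$, $j\ge1$, and on shell $j$ keeps the estimate in the form
\[
|\hat{H}_{na}(N_i)|\le\mathcal{O}(2^{5d})\,\frac{c_lj\log n}{n}\,\bigl(j^{c_s\log n}+c_s\log n\bigr),
\]
so the base of the $d$th power is the \emph{ratio} $j\approx N_i/(c_l\log n)$, not $N_i$. The two-case tail argument (your small/large-$\lambda_i$ dichotomy, applied shell by shell) then yields a shell probability $\le(1+\varepsilon^{-1})np_i\cdot e^{-\Theta(c_lj\log n)}$, and $j^{c_s\log n}\cdot n^{-\Theta(c_lj)}$ is summable in $j$ once $c_l$ is large relative to $c_s$. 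This also delivers a per-symbol bound proportional to $p_i$, which is what makes the sum over $i$ equal $1/n^5$; your sketch instead assumes $k\le n^{O(1)}$ to sum the small-$\lambda_i$ contributions, but no such bound on $k$ is available.
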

\begin{proof}
For simplicity, we assume that $c_l\geq 4$ and $\varepsilon\log n \ge 1$.
By the triangle inequality,
\begin{align*}
&\left| \EE\left[\hat{H}_{na}(N_i)\cdot \indic_{N_i> c_l\log n}\right]\cdot \EE\left[\indic_{N_i'\leq \varepsilon^{-1}}\right]\right|\\
&\leq \sum_{j=1}^{\infty}\left| \EE\left[\hat{H}_{na}(N_i)\cdot \indic_{c_l(j+1)\log n\geq N_i> c_l j\log n}\right]\cdot \EE\left[\indic_{N_i'\leq \varepsilon^{-1}}\right]\right|.
\end{align*}
To bound the last term, we need the following result: for $j\geq 1$,
\[
\left| \EE\left[\indic_{c_l(j+1)\log n\geq N_i> c_l j\log n}\right]\cdot \EE\left[\indic_{N_i'\leq \varepsilon^{-1}}\right]\right|\leq \Paren{1+\varepsilon^{-1}}np_i\cdot e^{-\Theta(c_l j \log n)}.
\]
To prove this inequality, we apply Lemma~\ref{tailprob} and consider two cases:\\
Case 1: If $np_i<({3c_l}/{8})j\log n$, then 
\[
\EE\left[\indic_{c_l (j+1)\log n\geq N_i> c_l j\log n}\right]\leq np_i\cdot e^{-\Theta(c_l j \log n)}.
\]
Case 2: If $np_i\geq({3c_l}/{8})j\log n$, then
\[
\EE\left[\indic_{N_i'\leq \varepsilon^{-1} }\right]\leq np_i \varepsilon^{-1}\cdot e^{-\Theta(c_l j \log n)}.
\]
This essentially completes the proof.
Next we bound $\hat{H}_{na}(N_i)$ for $N_i \in [c_l j\log n, c_l (j+1)\log n]$:
\begin{align*}
|\hat{H}_{na}(N_i)|
&= \left|\Paren{\log\frac{na-1}{c_l a \log n}}\frac{N_i}{n} +\sum_{t=1}^{d} b_t' \frac{N_i^{\underline{t}}}{n^t}\right|\\
&\leq \mathcal{O}(2^{4d}) \sum_{t=1}^{c_s\log n} \Paren{\frac{n}{c_l\log n}}^{t-1} \frac{(c_l (j+1)\log n)^t}{n^t}\\
&\leq \mathcal{O}(2^{5d}) \frac{c_l j \log n}{n}\sum_{t=1}^{c_s\log n} j^{t-1}\\
&\leq \mathcal{O}(2^{5d}) \frac{c_l j \log n}{n}(j^{c_s\log n}+c_s\log n).
\end{align*}
Hence, for sufficiently large $c_l$,
\begin{align*}
&\left| \EE\left[\hat{H}_{na}(N_i)\cdot \indic_{N_i> c_l \log n}\right]\cdot \EE\left[\indic_{N_i'\leq \varepsilon^{-1}}\right]\right|\\
&\leq \sum_{j=1}^{\infty}\left| \EE\left[\hat{H}_{na}(N_i)\cdot \indic_{c_l (j+1)\log n\geq N_i> c_l j\log n}\right]\cdot \EE\left[\indic_{N_i'\leq \varepsilon^{-1}}\right]\right|\\
&\leq   \sum_{j=1}^{\infty}\mathcal{O}(2^{5d}) \cdot c_l j \log n(j^{c_s\log n}+c_s\log n)\cdot \EE\left[\indic_{c_l (j+1)\log n\geq N_i> c_l j\log n}\right]\cdot \EE\left[\indic_{N_i'\leq \varepsilon^{-1}}\right]\\
&\leq \mathcal{O}(2^{5d})  \sum_{j=1}^{\infty}\Paren{1+\varepsilon^{-1}}p_i\cdot e^{-\Theta(c_l j \log n)}\cdot c_l j \log n(j^{c_s\log n}+c_s\log n)\\
&\leq p_i\sum_{j=1}^{\infty}{\frac{1}{2n^{5j}}}\\
&\leq{\frac{p_i}{n^5}}.
\end{align*}
This yields the desired result. \qedhere
\end{proof}

\subsection{Bias of the large-probability estimator}\label{Bias_large}
In this section we prove the bound $|\text{Bias}_L|\leq 2\Paren{\varepsilon\land ({S_{\vec{p}}}/{n}})$. By the triangle inequality, 
\begin{align*}
\text{Bias}_L
&\leq \sum_{i\in[k]}\left|\EE\left[h\Paren{\frac{N_i}{n}}-B_{na}(h,p_i)\right]\right|\cdot \EE\left[\indic_{N_i'> \varepsilon^{-1}}\right]\\
&\leq \sum_{i\in[k]}\left|h(p_i)-B_{na}(h,p_i)\right|\cdot \EE\left[\indic_{N_i'> \varepsilon^{-1}}\right]
+\sum_{i\in[k]}\left|\EE\left[h\Paren{\frac{N_i}{n}}-h(p_i)\right]\right|\cdot \EE\left[\indic_{N_i'> \varepsilon^{-1}}\right].
\end{align*}
We need the following inequality to bound the right-hand side.
\begin{align*}
0
\leq x\log x-(x-1)
\leq (x-1)^2,\ \forall x\in [0,1].
\end{align*}
For simplicity, denote $\hat{p}_i:=N_i/n$. Then,
\begin{align*}
\Abs{\EE\left[h\Paren{\frac{N_i}{n}}-h(p_i)\right]}
&=\Abs{\EE[p_i\log p_i-\hat{p}_i\log \hat{p}_i]}\\
&\leq \Abs{\EE[p_i\log p_i-\hat{p}_i\log p_i]}+\Abs{\EE[\hat{p}_i\log p_i-\hat{p}_i\log \hat{p}_i]}\\
&=p_i\Abs{\EE\left[\frac{\hat{p}_i}{p_i}\log \frac{\hat{p}_i}{p_i} \right]}\\
&\leq p_i\Abs{\EE\left[\Paren{\frac{\hat{p}_i}{p_i}-1}+\Paren{\frac{\hat{p}_i}{p_i}-1}^2 \right]}\\
&= \frac{1}{n}.
\end{align*}
The above derivations also proved that
\[
\left|h(p_i)-B_{na}(h,p_i)\right|\leq \frac{1}{na}.
\]
Consider the first term on the right-hand side. By the above bounds and the Markov's inequality,
\begin{align*}
\sum_{i\in[k]}\left|h(p_i)-B_{na}(h,p_i)\right|\cdot \EE\left[\indic_{N_i'> \varepsilon^{-1}}\right]
& \leq \frac{1}{na}\sum_{i\in[k]}\EE\left[\indic_{N_i'> \varepsilon^{-1}}\right]\\
&\leq \frac{1}{na}\sum_{i\in[k]}\Paren{\indic_{p_i>0}\land \varepsilon np_i}\\
&\leq {\varepsilon\land \frac{S_{\vec{p}}}{n}}.
\end{align*}
For the second term, an analogous argument yields
\[
\sum_{i\in[k]}\left|\EE\left[h\Paren{\frac{N_i}{n}}-h(p_i)\right]\right|\cdot \EE\left[\indic_{N_i'> \varepsilon}\right]\leq {\varepsilon\land \frac{S_{\vec{p}}}{n}}.
\]

\section{Bounding the mean absolute deviation of $\boldsymbol{\hat{H}}$}\label{Mean_H}
By the Jensen's inequality, 
\[
\EE[|\hat{H}(X^N,X^{N'})-\EE[\hat{H}(X^N,X^{N'})]|]\leq \sqrt{\Var{(\hat{H}(X^N,X^{N'}))}}.
\]
Hence, to bound the mean absolute deviation of $\hat{H}$, we only need to bound its variance.
Note that the counts are mutually independent. The inequality $\Var(X+Y)\leq 2(\Var(X)+\Var(Y))$ implies
\[
\Var{(\hat{H}(X^N,X^{N'}))}= \sum_{i\in[k]} \Var(\hat{h}(N_i,N_i'))\leq 2\Var_S+2\Var_L,
\]
where 
\[
\Var_S:= \sum_{i\in[k]} \Var\Paren{\hat{H}_{na}(N_i)\cdot \indic_{N_i\leq c_l \log n}\cdot \indic_{N_i'\leq \varepsilon^{-1}}}
\]
is the variance of the small-probability estimator, and
\[
\Var_L:= \sum_{i\in[k]} \Var\Paren{h\Paren{\frac{N_i}{n}}\cdot\indic_{N_i'> \varepsilon^{-1}}}
\]
is the variance of the large-probability estimator. Assume that $c_l$ and $c_s$ are sufficiently large and small constants, respectively.
In Section~\ref{Var_small}, we prove
\[
\Var_S\leq \mathcal{O}\Paren{\frac{1}{n^{1-\Theta(c_s)}}},
\]
and in Section~\ref{Var_large}, we show
\[
\Var_L\leq \mathcal{O}\Paren{\frac{(\log n)^3}{n}}.
\]
\subsection{Variance of the small-probability estimator}\label{Var_small}
First we bound the quantity $\Var_S$. Our objective is to prove $\Var_S\leq \mathcal{O}\Paren{{1}/{n^{1-\Theta(c_s)}}}$. According to the previous derivations,
\begin{align*}
\Var_S
&\leq 2\sum_{i\in[k]} \Var\Paren{\hat{H}_{na}(N_i)\cdot \indic_{N_i> c_l\log n}\cdot \indic_{N_i'\leq \varepsilon^{-1}}}\\
&+2\sum_{i\in[k]} \Var\Paren{\hat{H}_{na}(N_i)\cdot \indic_{N_i'\leq \varepsilon^{-1}}}\\
&\leq 2\sum_{i\in[k]} \EE[(\hat{H}_{na}(N_i))^2\cdot \indic_{N_i>c_l\log n}]\cdot \EE[\indic_{N_i'\leq \varepsilon^{-1}}]\\
&+2\sum_{i\in[k]} \Var\Paren{\hat{H}_{na}(N_i)}\cdot \EE[\indic_{N_i'\leq \varepsilon^{-1}}]+2\sum_{i\in[k]} (\EE[\hat{H}_{na}(N_i)])^2\cdot \Var(\indic_{N_i'\leq \varepsilon^{-1}})\\
&\leq 2\sum_{i\in[k]} \EE[(\hat{H}_{na}(N_i))^2\cdot \indic_{N_i>c_l\log n}]\cdot \EE[\indic_{N_i'\leq \varepsilon^{-1}}]\\
&+2\sum_{i\in[k]} \Var\Paren{\hat{H}_{na}(N_i)}\cdot \EE[\indic_{N_i'\leq \varepsilon^{-1}}]+2\sum_{i\in[k]} (\tilde{H}_{na}(p_i))^2\cdot \Var(\indic_{N_i'\leq \varepsilon^{-1}}),
\end{align*}
where the first step follows from the inequality $\Var(X-Y)\leq 2(\Var(X)+\Var(Y))$, the second step follows from $\Var(A\cdot B)=\EE[A^2]\Var(B)+\Var(A)(\EE[B])^2$ for $A\perp B$, and the last step follows from $\EE[\hat{H}_{na}(N_i)]=\tilde{H}_{na}(p_i)$.

For the first term on the right-hand side, similar to the proof of Lemma~\ref{Bias_extra}, we have
\[
\sum_{i\in[k]} \left| \EE\left[(\hat{H}_{na}(N_i))^2\cdot \indic_{N_i> c_l\log n}\right]\cdot \EE\left[\indic_{N_i'\leq \varepsilon^{-1}}\right]\right|\leq  \sum_{i\in[k]} {\frac{p_i}{n^3}}p_i = \frac{1}{n^3},
\]
for sufficiently large $c_l$.

For the second term on the right-hand side,
\begin{align*}
&\sum_{i\in[k]} \Var\Paren{\hat{H}_{na}(N_i)}\cdot \EE[\indic_{N_i'\leq \varepsilon^{-1}}]\\
&\leq \mathcal{O}(2^{8d}) \sum_{i\in[k]} d\sum_{t=1}^{d} \Paren{\frac{n}{c_l\log n}}^{2(t-1)} \frac{\Var(N_i^{\underline{t}})}{n^{2t}}\cdot \EE[\indic_{N_i'\leq \varepsilon^{-1}}]\\
&=\mathcal{O}(2^{8d}) \frac{d}{n^2}\sum_{i\in[k]} \sum_{t=1}^{d} \Paren{\frac{1}{c_l\log n}}^{2(t-1)} \Var(N_i^{\underline{t}})\cdot \EE[\indic_{N_i'\leq \varepsilon^{-1}}]\\
&=\mathcal{O}(2^{8d}) \frac{d}{n^2}\sum_{i\in[k]} \sum_{t=1}^{d} \Paren{\frac{1}{c_l\log n}}^{2(t-1)} (np_i)^t \sum_{j=0}^{t-1} \binom{t}{j}(np_i)^j\frac{t!}{j!}\cdot \EE[\indic_{N_i'\leq \varepsilon^{-1}}]\\
&\leq \mathcal{O}(2^{8d}) \frac{d}{n^2}\sum_{i\in[k]} \sum_{t=1}^{d} \Paren{\frac{1}{c_l\log n}}^{2(t-1)} (np_i)^t (t+np_i)^t \cdot \EE[\indic_{N_i'\leq \varepsilon^{-1}}]\\
&\leq \mathcal{O}(2^{8d}) \frac{d}{n^2}\sum_{i\in[k]} \sum_{t=1}^{d} \Paren{\frac{1}{c_l\log n}}^{2(t-1)} 2^t((np_i)^{2t}+(np_i)^t t^t) \cdot \Pr(N_i'\leq \varepsilon^{-1})\\
&\leq \mathcal{O}(2^{8d}) \frac{d}{n}\sum_{i\in[k]} p_i\sum_{t=1}^{d} \Paren{\frac{1}{c_l\log n}}^{2(t-1)} 2^t\left((\varepsilon^{-1} + 2t)^{2t-1}\cdot \Pr(N_i'\leq \varepsilon^{-1}+2t)\right.\\
&+\left.(\varepsilon^{-1}+t)^{t-1} t^t \cdot \Pr(N_i'\leq \varepsilon^{-1}+t)\right)\\
&\leq \mathcal{O}(2^{9d}) \frac{d}{n}.
\end{align*}

It remains to bound the third term. Noting that $|\tilde{H}_{na}(p_i)|\leq \mathcal{O}(2^{5d})p_i$, we have
\begin{align*}
&\sum_{i\in[k]} (\tilde{H}_{na}(p_i))^2\cdot \Var(\indic_{N_i'\leq \varepsilon^{-1}})\\
&\leq \mathcal{O}(2^{8d}) \sum_{i\in[k]} \sum_{t=1}^{d} \Paren{\frac{n}{c_l\log n}}^{2(t-1)} p_i^{2t} \cdot \Var(\indic_{N_i'\leq \varepsilon^{-1}})\\
&\leq \mathcal{O}(2^{8d}) \sum_{i\in[k]} \sum_{t=1}^{d} \Paren{\frac{n}{c_l\log n}}^{2(t-1)} p_i^{2t} \cdot \Pr(N_i'\leq \varepsilon^{-1})\\
&= \mathcal{O}(2^{8d}) \sum_{i\in[k]} p_i \sum_{t=1}^{d} \Paren{\frac{n}{c_l\log n}}^{2(t-1)} p_i^{2t-1} \cdot  \sum_{m = 0}^{\varepsilon^{-1}} e^{-np_i} \frac{(np_i)^m}{m!}\\
&\leq  \mathcal{O}(2^{8d}) \sum_{i\in[k]} p_i \sum_{t=1}^{d} \Paren{\frac{n}{c_l\log n}}^{2(t-1)} \Paren{\frac{2t-1+\varepsilon^{-1}}{n}}^{2t-1}\cdot \Pr(N_i\leq 2t-1+\varepsilon^{-1})\\
&\leq  \mathcal{O}(2^{8d}) \sum_{i\in[k]} p_i \cdot c_s\log n \frac{c_l\log n}{n}\\
&\leq \mathcal{O}\Paren{\frac{2^{9d}}{n}}.
\end{align*}
Consolidating all the three bounds above yields
\[
\Var_S\leq \frac{2}{n^3}+\mathcal{O}(2^{9d}) \frac{d}{n}+\mathcal{O}\Paren{\frac{2^{9d}}{n}}\leq \frac{1}{n^{1-\Theta(c_s)}},
\]
where the last step follows by $d=c_s\log n$.
\subsection{Variance of the large-probability estimator}\label{Var_large}
In this section we bound the quantity $\Var_L$. Our objective is to prove $\Var_L\leq \mathcal{O}((\log n)^3/n)$. Due to independence,
\begin{align*}
\Var_L=&\sum_{i\in[k]} \Var\Paren{h\Paren{\frac{N_i}{n}}\cdot\indic_{N_i'> \varepsilon^{-1}}}.
\end{align*}
The following lemma bounds the last sum.
\begin{Lemma}
For $s\geq{1}$, 
\[
\sum_{i\in[k]} \Var\Paren{h\Paren{\frac{N_i}{n}}\cdot\indic_{N_i'> s}} \leq  (\log n)^2\frac{4s}{n}.
\]
\end{Lemma}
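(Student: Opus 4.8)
The plan is to bound each summand separately, using that $N_i$ and $N_i'$ are independent $\Poi(np_i)$ variables. Writing $q_i:=\PP(N_i'>s)$ and using $\indic_{N_i'>s}^2=\indic_{N_i'>s}$ together with independence, one has the exact identity
\[
\Var\Paren{h\Paren{\frac{N_i}{n}}\indic_{N_i'>s}}=q_i\,\Var\Paren{h\Paren{\frac{N_i}{n}}}+q_i(1-q_i)\Paren{\EE\Br{h\Paren{\frac{N_i}{n}}}}^2 .
\]
Bounding $q_i\le 1$ on the first term and $q_i(1-q_i)\le 1-q_i=\PP(N_i'\le s)$ on the second, and summing over $i$, it suffices to bound each of
\[
\mathrm{(I)}:=\sum_{i\in[k]}\Var\Paren{h\Paren{\frac{N_i}{n}}},\qquad \mathrm{(II)}:=\sum_{i\in[k]}\PP(N_i'\le s)\Paren{\EE\Br{h\Paren{\frac{N_i}{n}}}}^2
\]
by $\mathcal{O}(s(\log n)^2/n)$. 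Working with this identity rather than the cruder $\Var\le\EE[\,\cdot^2\,]$ is essential: for frequent symbols $h(N_i/n)$ concentrates, so $\mathrm{(I)}$ is small even though $\EE[h(N_i/n)^2]$ need not be — this is precisely the statement that ``$h$ is smoother for larger $x$''.

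Term $\mathrm{(I)}$ is the variance of the Poissonized plug-in entropy estimator $\sum_i h(N_i/n)$, which is $\mathcal{O}((\log n)^2/n)\le s\cdot\mathcal{O}((\log n)^2/n)$; I would establish this through the per-symbol bound $\Var(h(N_i/n))=\mathcal{O}((\log n)^2 p_i/n)$ and then sum using $\sum_i p_i=1$. For $np_i\le 1$ this follows from $\Var\le\EE[h(N_i/n)^2]\le\frac{(\log n)^2}{n^2}\EE[N_i^2\indic_{1\le N_i\le n}]+(\text{negligible }N_i>n\text{ tail})$, since $\EE[N_i^2]=np_i+(np_i)^2\le 2np_i$. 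For $np_i>1$ I would use $\Var(h(N_i/n))\le\EE[(h(N_i/n)-h(p_i))^2]$: on the bulk event $\{np_i/2\le N_i\le 2np_i\}$ the function $h$ is $\mathcal{O}(\log n)$-Lipschitz on the corresponding range of $N_i/n$ (because $|h'(x)|=|\log x+1|$ is $\mathcal{O}(\log n)$ there, using $p_i>1/n$), contributing $\mathcal{O}((\log n)^2)\Var(N_i/n)=\mathcal{O}((\log n)^2 p_i/n)$; off the bulk Lemma~\ref{tailprob} bounds the probability by $e^{-\Theta(np_i)}$ and $(h(N_i/n)-h(p_i))^2$ by a low-degree polynomial in $N_i/n$ and $\log n$, and the elementary inequality $(np_i)^2e^{-np_i/8}\le\frac{8}{e}\,np_i$ turns this into another $\mathcal{O}((\log n)^2 p_i/n)$ term. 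The at most two symbols with $p_i>1/2$ (where $N_i/n$ can exceed $1$ and $h$ changes sign) are treated separately: there $h$ is $\mathcal{O}(1)$-Lipschitz near $p_i$, so $\Var(h(N_i/n))=\mathcal{O}(1/n)$.

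For term $\mathrm{(II)}$ I would split the sum at $np_i=2s$. First, $\Abs{\EE[h(N_i/n)]}\le\EE\Abs{h(N_i/n)}\le(\log n)\,\EE[(N_i/n)\indic_{N_i\le n}]+\EE[(N_i/n)^2\indic_{N_i>n}]\le(\log n)p_i+(\text{negligible})$, so $(\EE[h(N_i/n)])^2=\mathcal{O}((\log n)^2 p_i^2)$ for $p_i\le1/2$; the $\le2$ symbols with $p_i>1/2$ have $\PP(N_i'\le s)\le e^{-\Theta(n)}$ (assuming $s\le n/4$) and contribute negligibly. If $np_i\le 2s$, I bound $\PP(N_i'\le s)\le1$ and $p_i^2\le\frac{2s}{n}p_i$, making the summand $\mathcal{O}(s(\log n)^2 p_i/n)$, which sums to $\mathcal{O}(s(\log n)^2/n)$. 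If $np_i>2s$, Lemma~\ref{tailprob} with $\delta=1/2$ gives $\PP(N_i'\le s)\le\PP(N_i'\le np_i/2)\le e^{-np_i/8}$, so the summand is $\mathcal{O}\Paren{\frac{(\log n)^2}{n^2}(np_i)^2e^{-np_i/8}}$, and $\sum_i(np_i)^2e^{-np_i/8}\le\frac{8}{e}\sum_i np_i=\frac{8}{e}n$ bounds this part by $\mathcal{O}((\log n)^2/n)$. Adding $\mathrm{(I)}$ and $\mathrm{(II)}$, using $s\ge1$, and handling the degenerate range $s>n/4$ by the crude estimate $\sum_i\Var(h(N_i/n)\indic_{N_i'>s})\le\sum_i\EE[h(N_i/n)^2]=\mathcal{O}(1)\le 4s(\log n)^2/n$, gives the lemma.

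The main obstacle is term $\mathrm{(I)}$: making the per-symbol variance bound fully rigorous across all ranges of $np_i$ — in particular controlling the Poisson-tail contributions, where $h$ is neither bounded nor Lipschitz and $N_i/n$ may exceed $1$ — and doing it tightly enough to extract the stated constant $4$. One can sidestep the first difficulty by invoking the known variance bound $\Var(\sum_i h(N_i/n))=\mathcal{O}((\log n)^2/n)$ for the Poissonized empirical entropy estimator, at the cost of a larger absolute constant; sharpening the bookkeeping in the two tail computations and in the $np_i\le 2s$ part of $\mathrm{(II)}$ (which is the source of the factor $s$) is what reaches $4s$. Everything else is a routine combination of the identity above with the Chernoff bounds of Lemma~\ref{tailprob} and the elementary inequalities $(np_i)^2e^{-np_i/8}\le\frac{8}{e}\,np_i$ and $p_i^2\le\frac{2s}{n}p_i$ on their respective ranges.
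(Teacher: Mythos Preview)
Your decomposition is equivalent to the paper's: both expand $\Var(AB)$ for independent $A=h(N_i/n)$ and indicator $B=\indic_{N_i'>s}$, you as $q_i\Var(A)+q_i(1-q_i)(\EE A)^2$ and the paper as $\Var(B)\,\EE[A^2]+(\EE B)^2\Var(A)$, which are algebraically identical. So the overall strategy matches; the difference is entirely in how each of the two pieces is bounded.

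For $\sum_i\Var(h(N_i/n))$, the paper avoids your case analysis on $np_i$ by symmetrization: with $\hat N_i$ an i.i.d.\ copy of $N_i$, $2\Var(h(N_i/n))=\EE[(h(N_i/n)-h(\hat N_i/n))^2]$, and since $N_i/n$ takes values in $\{0,1/n,2/n,\ldots\}$ one has $|h(N_i/n)-h(\hat N_i/n)|\le(\log n)\,|N_i-\hat N_i|/n$, giving $\Var(h(N_i/n))\le(\log n)^2 p_i/n$ in one line. For the cross term the paper replaces your Chernoff split at $np_i=2s$ by the Poisson index-shift identity
\[
p_i\,\PP(N_i'\le s)=\frac{1}{n}\sum_{j=0}^{s}(j+1)\,e^{-np_i}\frac{(np_i)^{j+1}}{(j+1)!}\le\frac{s+1}{n}\,\PP(1\le N_i'\le s+1)\le\frac{s+1}{n},
\]
applied after bounding $\EE[h^2(N_i/n)]\le(\log n)^2\EE[(N_i/n)^2]=(\log n)^2(p_i/n+p_i^2)$. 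These two shortcuts are what make the constant $4$ fall out directly (the pieces sum to $(\log n)^2(s+3)/n\le 4s(\log n)^2/n$ for $s\ge1$); your route yields the same $\mathcal{O}(s(\log n)^2/n)$ but, as you note, tracking the exact constant through the tail estimates and case splits would be laborious.
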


\begin{proof}
Decompose the variances,
\begin{align*}
\sum_{i\in[k]} \Var{\Paren{h \left(\frac{N_i}{n} \right) \indic_{N_i'> s}}}
& =\Var(\indic_{N_i'> s})\EE\left[h^2 \left(\frac{N_i}{n} \right)\right]+ \sum_{i\in[k]} \Paren{\EE[\indic_{N_i'> s}]}^2\Var{\Paren{h \left(\frac{N_i}{n} \right)}}\\
&\leq\Var(\indic_{N_i'> s})\EE\left[h^2 \left(\frac{N_i}{n} \right)\right]+ \sum_{i\in[k]} \Var{\Paren{h \left(\frac{N_i}{n} \right)}}.
\end{align*}
To bound the first term on the right-hand side,
\begin{align*}
\Var(\indic_{N_i'> s})\EE\left[h^2 \left(\frac{N_i}{n} \right)\right]
&\leq \Var(\indic_{N_i'> s})\EE\left[(\log n)^2\Paren{\frac{N_i}{n}}^2\right]\\
&\leq (\log n)^2\frac{p_i}{n}\Paren{1+np_i\Var(\indic_{N_i'> s})},
\end{align*}
where we can further bound 
\begin{align*}
p_i \Var(\indic_{N_i'> s})
&\leq p_i \PP[N_i'\leq s]\\
&= e^{-np_i}\sum_{j=0}^{s}\frac{(np_i)^{j+1}}{(j+1)!}\frac{j+1}{n}\\
&\leq \frac{s+1}{n} e^{-np_i}\sum_{j=0}^{s}\frac{(np_i)^{j+1}}{(j+1)!}\\
&= \frac{s+1}{n} \PP(1\leq N'_x\leq{s+1})\\
&\leq \frac{s+1}{n}.
\end{align*}
To bound the second term, let $\hat{N}_i$
be an i.i.d. copy of $N_i$ for each $i$, 
\begin{align*}
2\Var{\Paren{h \left(\frac{N_i}{n} \right)}}
&= \Var{\Paren{h \left(\frac{N_i}{n} \right) -h\left(\frac{\hat{N}_i}{n} \right)}}\\
&= \EE\left[\Paren{h \left(\frac{N_i}{n} \right) -h\left(\frac{\hat{N}_i}{n} \right)}^2\right]\\
&\leq \EE\left[(\log n)^2\Paren{\frac{N_i}{n} - \frac{\hat{N}_i}{n} }^2\right]\\
&=2(\log n)^2\frac{p_i}{n}.
\end{align*}
A simple combination of these bounds yields the lemma. \qedhere
\end{proof}
Setting $s=\varepsilon^{-1}$ in the above lemma and assuming $\varepsilon\log n\geq 1$, we get
\[
\Var_L= \sum_{i\in[k]} \Var\Paren{h\Paren{\frac{N_i}{n}}\cdot\indic_{N_i'> \varepsilon^{-1}}} \leq  \frac{4(\log n)^3}{n}.
\]

\pagebreak
\section{Experiments}\label{experiments}

We demonstrate the efficacy of the proposed estimators by comparing their performance to
several state-of-the-art estimators~\cite{mmcover, mmentro,mmsize}, and empirical 
estimators with larger sample sizes. Due to similarity of the methods,
we present only the results for
Shannon entropy and support size. For each property, we considered nine natural synthetic
distributions: uniform, two-steps-,  Zipf(1/2), Zipf(1), binomial, geometric, Poisson, Dirichlet(1)-drawn-, and Dirichlet(1/2)-drawn-. 
The plots are shown in Figures~\ref{fig: entropy} and~\ref{fig: support}.

As Theorem~\ref{thm1} and~\ref{thm3} would imply and the experiments confirmed, for both properties, the
proposed estimators with $n$ samples achieved the same accuracy as the
empirical estimators with $n\log n$ samples for Shannon entropy 
and $n\log S_{\vec{p}}$ samples for support size. In particular, for
Shannon entropy, the proposed estimator with $n$ samples performed
significantly better than the $n\log n$-sample empirical estimator,
for all tested distributions and all values of $n$. For both
properties, the proposed estimators are essentially the best among all
state-of-the-art estimators in terms of accuracy and stability.

Next, we describe the experimental settings. 

\subsection*{Experimental settings}
We experimented with nine distributions:
uniform;
a two-steps distribution with probability values $0.5k^{-1}$ and $1.5k^{-1}$; 
Zipf distribution with power $1/2$; 
Zipf distribution with power $1$;
binomial distribution with success probability $0.3$;
geometric distribution with success probability $0.9$;
Poisson distribution with mean $0.3 k$;
a distribution randomly generated from Dirichlet prior with parameter 1;
and a distribution randomly generated from Dirichlet prior with parameter 1/2.

All distributions have support size $k=1000$.
The geometric, Poisson, and Zipf distributions were truncated at $k$
and re-normalized. 
The horizontal axis shows the number of samples, $n$, ranging from $5$ to $640$. 
Each experiment was repeated 100 times and the reported results,
shown on the vertical axis, reflect their mean values and standard deviations. Specifically, the true property value is drawn as a dashed black line, and the other estimators are color coded, with the solid line displaying their mean estimate, and the shaded area corresponding to one standard deviation.

We compared the estimators' performance with $n$ samples to that
of four other recent estimators as well as 
the empirical estimator with $n$, $n\sqrt{\log A}$, and $n\log A$
samples,
where for Shannon entropy, $A=n$ and for support size, $A=S_{\vec p}$.
We chose the parameter $\varepsilon=1$. 
The graphs denote
our proposed estimator by Proposed,
$\hat{F}^E$ with $n$ samples by Empirical,
$\hat{F}^E$ with $n\sqrt{\log{A}}$ samples by Empirical+,
$\hat{F}^E$ with $n\log{A}$ samples by Empirical++,
the profile maximum likelihood estimator (for entropy and support size) in~\cite{mmcover} by PML,
the support-size estimator in~\cite{mmsize} and 
the entropy estimator in~\cite{mmentro} by WY.  
Additional estimators for both properties were compared
in~\cite{mmentro,mmsize,pnas,jvhw} and found to perform
similarly to or worse than the estimators we tested, hence we exclude
them here.

\begin{figure}[p]
\hspace*{-1.8cm}  
\includegraphics[scale=0.41]{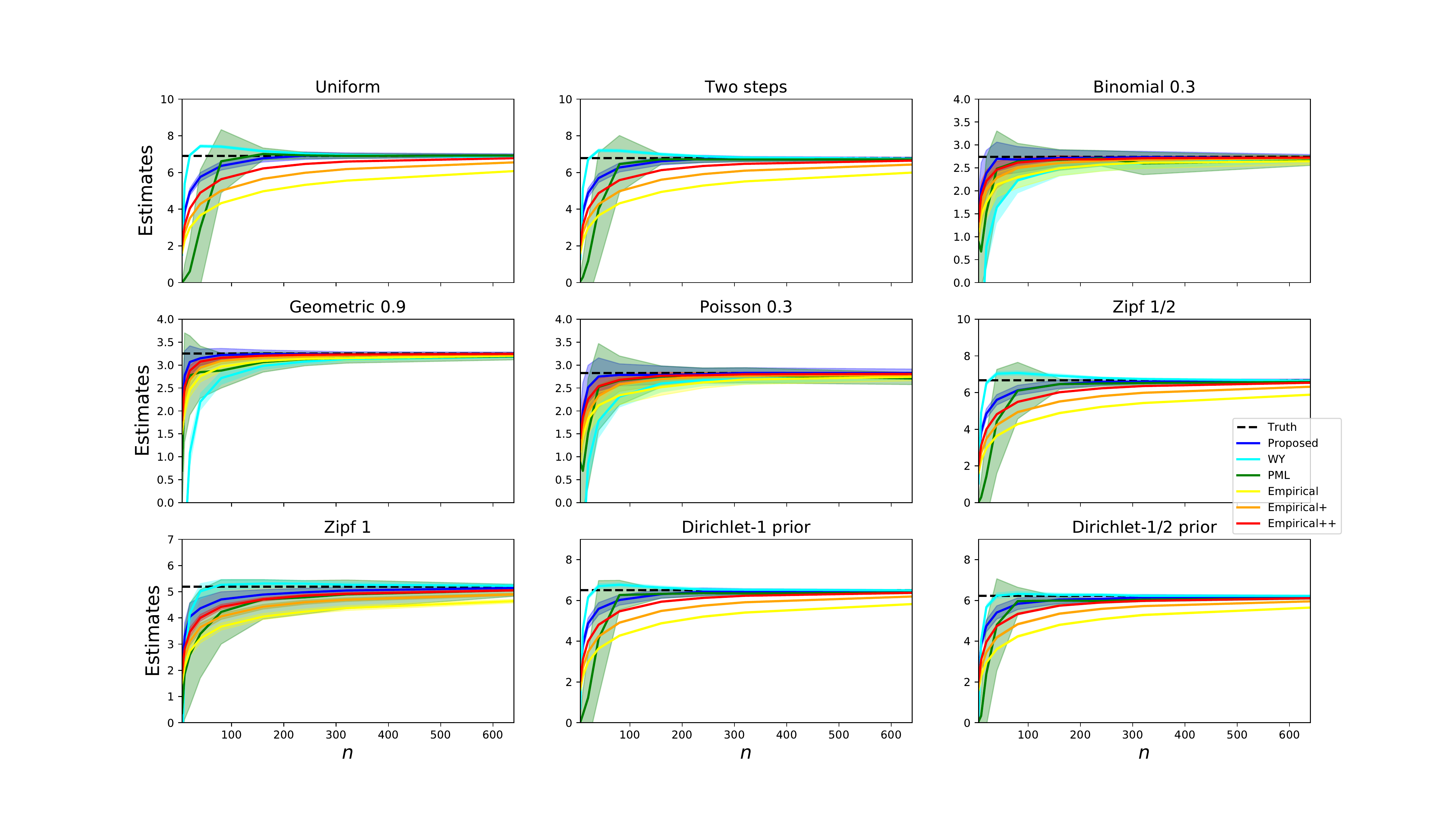}
\vspace*{-1cm}  
\caption{Shannon entropy}
\label{fig: entropy}
\end{figure}
\begin{figure}[p]
\hspace*{-1.8cm}  
\includegraphics[scale=0.41]{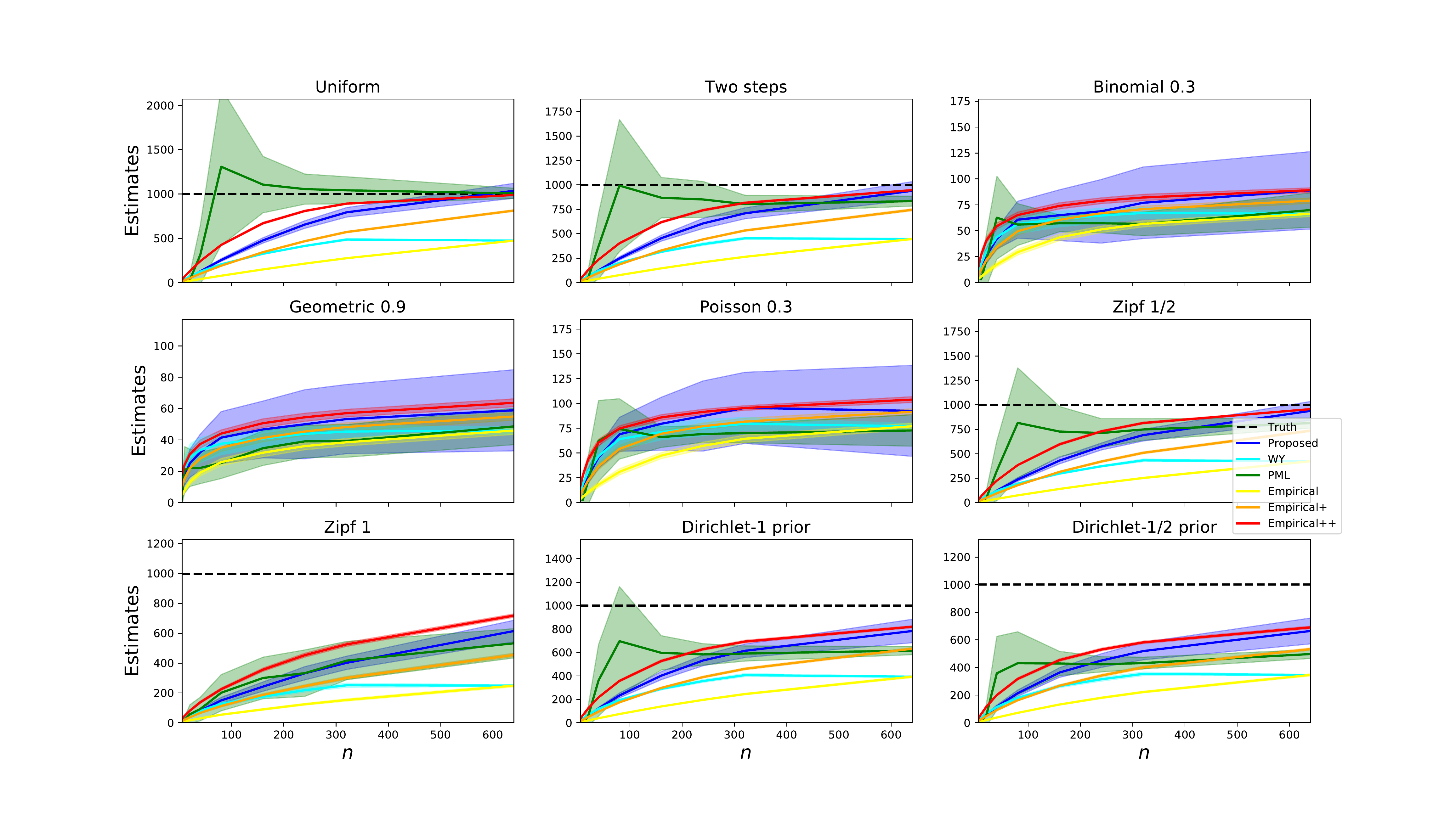}
\vspace*{-1cm}  
\caption{Support size}
\label{fig: support}
\end{figure}

\vfill
\pagebreak
\appendix
\section{A refined estimator for Shannon entropy}\label{refined_est}
For $z\in [0,\infty]$, we define the following two $f$-functions:
\[
f_{1}(z):=-e^{-z}\sum_{j=1}^{\infty}\frac{z^j}{j!}j\log j
\]
and 
\[
f_{2}(z):=-e^{-z}\sum_{j=1}^{\infty}\frac{z^j}{j!}(j+1)\log(j+1).
\]

\subsection{Relating the $\boldsymbol{f}$-functions to Bernstein approximation errors}
For $x\in[0,1]$, set $z=nx$. 
The following lemma relates ${f_{1}}(z)$ and ${f_{2}}(z)$ to $h_{n+1}(x)-B_n(h_{n+1}, x)$.
\begin{Lemma}\label{app_gz}
For $x\in[0,{\log^4 n}/{n}]$, 
\[
h_{n+1}(x)-B_n(h_{n+1}, x)=(h(z+1)-{f_{2}}(z))-(h(z)-{f_{1}}(z))+\tilde{\mathcal{O}}\Paren{\frac{1}{n}}.
\]
\end{Lemma}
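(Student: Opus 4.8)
The plan is to evaluate $h_{n+1}(x)$ and $B_n(h_{n+1},x)$ in closed form and then trade the binomial expectation hidden in $B_n$ for a Poisson expectation, which is exactly what produces the functions $f_1$ and $f_2$. First I would substitute $h(y)=-y\log y$ into the two expressions for $h_{n+1}$ recorded in Section~\ref{der_Bern}. Writing $z=nx$, a short computation gives
\[
h_{n+1}(x)=\log(n+1)+h(z+1)-h(z),
\]
and, evaluated at the grid points,
\[
h_{n+1}\!\Paren{\frac{j}{n}}=\log(n+1)+h(j+1)-h(j),\qquad j=0,1,\dots,n,
\]
with the convention $h(0)=0$. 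Since $B_n(g,x)=\Exp_{N\sim\mathrm{Bin}(n,x)}[g(N/n)]$, the grid identity yields
\[
B_n(h_{n+1},x)=\log(n+1)+\Exp_{N\sim\mathrm{Bin}(n,x)}\Br{h(N+1)-h(N)}.
\]

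Next I would pass from the binomial to the Poisson law. Directly from the pmf one checks $\Exp_{N\sim\Poi(z)}[h(N)]=f_1(z)$ and $\Exp_{N\sim\Poi(z)}[h(N+1)]=f_2(z)$, the lowest–order terms dropping out exactly. So the lemma reduces to the weighted approximation bound
\[
\Abs{\Exp_{\mathrm{Bin}(n,x)}[h(N)]-\Exp_{\Poi(z)}[h(N)]}=\tilde{\mathcal{O}}\Paren{\frac1n},
\]
together with the same estimate for $h(N+1)$, valid on $x\le\log^4 n/n$, i.e. $z\le\log^4 n$. To prove it I would split each series at a threshold $T=\log^5 n$. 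On the tail $j>T$ both masses decay faster than any fixed power of $n$ by Lemma~\ref{tailprob} (the multiplicative deviation $T/z$ exceeds $\log n$), and since $|h(j)|\le j\log j$ a crude moment bound (Cauchy–Schwarz against $\Exp[N^4]=\tilde{\mathcal O}(1)$) makes the tail $e^{-\Theta(\log^5 n)}$. On the head $j\le T$ I would use the pointwise ratio expansion
\[
\frac{\binom{n}{j}x^j(1-x)^{n-j}}{e^{-z}z^j/j!}=\Paren{\prod_{i=0}^{j-1}\Paren{1-\frac in}}(1-x)^{n-j}e^{nx}=1+\tilde{\mathcal{O}}\Paren{\frac1n},
\]
which holds because $j^2/n$, $jx$ and $nx^2=z^2/n$ are all $\tilde{\mathcal{O}}(1/n)$ in this range; multiplying the $\tilde{\mathcal{O}}(1/n)$ relative error by $\sum_{j\le T}e^{-z}\frac{z^j}{j!}|h(j)|\le\Exp_{\Poi(z)}|h(N)|=\tilde{\mathcal{O}}(1)$ gives the head bound.

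Finally I would reassemble the pieces: subtracting the two closed forms,
\[
h_{n+1}(x)-B_n(h_{n+1},x)=\big(h(z+1)-h(z)\big)-\Exp_{\mathrm{Bin}(n,x)}\Br{h(N+1)-h(N)},
\]
and replacing the binomial expectations by $f_2(z)$ and $f_1(z)$ at a cost of $\tilde{\mathcal{O}}(1/n)$ produces
\[
h_{n+1}(x)-B_n(h_{n+1},x)=\big(h(z+1)-f_2(z)\big)-\big(h(z)-f_1(z)\big)+\tilde{\mathcal{O}}\Paren{\frac1n},
\]
as claimed. The main obstacle is the weighted Poisson–binomial comparison in the middle step: a plain total–variation estimate is not enough because the weight $|h(j)|=j\log j$ is unbounded, so I must exploit the restriction $x\le\log^4 n/n$ to keep both the concentration radius and the weight polylogarithmic, and only then does the head/tail split close. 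Everything else is routine manipulation of $h(y)=-y\log y$.
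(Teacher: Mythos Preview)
Your proposal is correct and follows essentially the same approach as the paper: both reduce the claim to a binomial-to-Poisson swap for the weighted sums $\sum_j\binom{n}{j}x^j(1-x)^{n-j}\cdot j\log j$ and its $(j+1)\log(j+1)$ analogue, and both carry out that swap by splitting at the threshold $T=\log^5 n$, bounding the two tails via Chernoff/Lemma~\ref{tailprob} and the head via the pointwise ratio $\binom{n}{j}x^j(1-x)^{n-j}\big/\big(e^{-z}z^j/j!\big)=1+\tilde{\mathcal O}(1/n)$. The only cosmetic difference is that the paper first rewrites $h_{n+1}(x)-B_n(h_{n+1},x)=n\big(h_{-1}(x)-B_n(h_{-1},x)\big)-n\big(h(x)-B_n(h,x)\big)$ with $h_{-1}(x)=h(x+n^{-1})$ and then cancels the $\log n$ contributions, whereas you extract the constant $\log(n+1)$ up front and pass directly to $\Exp_{\mathrm{Bin}(n,x)}[h(N{+}1)-h(N)]$; the underlying estimates are identical.
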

\begin{proof}
Let $h_{-1}(x):=h(x+n^{-1})$.
By the linearity of expectation, 
\begin{align*}
h_{n+1}(x)-B_n(h_{n+1}, x)
&= n\Paren{h_{-1}(x)-h(x)-B_n(h_{-1},x)+B_n(h,x)}\\
&= n\Paren{h_{-1}(x)-B_n(h_{-1},x)}-n\Paren{h(x)-B_n(h,x)}.
\end{align*}
Recall that $z=nx$, which implies $z\in[0, \log^4 n]$. We have
\begin{align*}
n\Paren{h_{-1}(x)-B_n(h_1,x)}
&=-(nx+1)\log\Paren{\frac{nx+1}{n}}+\sum_{j=0}^{n} (j+1)\log\Paren{\frac{j+1}{n}} \binom{n}{j} x^j (1-x)^{n-j}\\
&=-(z+1)\log\Paren{\frac{z+1}{n}}+\sum_{j=0}^{n} (j+1)\log\Paren{\frac{j+1}{n}} \binom{n}{j} z^j  \frac{(n-z)^{n-j}}{n^n}\\
&=-(z+1)\log\Paren{{z+1}}+\Paren{1-\frac{z}{n}}^{n}\sum_{j=0}^{n} (j+1)\log\Paren{{j+1}} \binom{n}{j} z^j {(n-z)^{-j}}\\
&=-(z+1)\log\Paren{{z+1}}+\Paren{1-\frac{z}{n}}^{n}\sum_{j=0}^{n} (j+1)\log\Paren{{j+1}} \frac{n^{\underline{j}}}{n^j} \frac{z^j}{j!} {\Paren{1-\frac{z}{n}}^{-j}}\\
&=-(z+1)\log\Paren{{z+1}}+e^{-z}\sum_{j=0}^{\infty}\frac{z^j}{j!}(j+1)\log(j+1)+\tilde{\mathcal{O}}\Paren{\frac{1}{n}}\\
&=h(z+1)-{f_{2}}(z)+\tilde{\mathcal{O}}\Paren{\frac{1}{n}}.
\end{align*}
The second last equality is the most non-trivial step. To establish this equality, we need the following the three inequalities. 
\paragraph{Inequality 1:}
\begin{align*}
0\leq &\Paren{1-\frac{z}{n}}^{n}\sum_{j=\log^5 n+1}^{n} (j+1)\log\Paren{{j+1}} \frac{n^{\underline{j}}}{n^j} \frac{z^j}{j!} {\Paren{1-\frac{z}{n}}^{-j}}\\
&=\Paren{1-\frac{z}{n}}^{n}\sum_{j=\log^5 n+1}^{n} (j+1)\log\Paren{{j+1}} \frac{n^{\underline{j}}}{2^j(n-z)^j} \frac{(2z)^j}{j!}\\
&\leq e^{-z}\sum_{j=\log^5 n+1}^{n} (j+1)\log\Paren{{j+1}} \frac{(2z)^j}{j!}\\
&\leq e^{-z}\sum_{j=\log^5 n+1}^{n}  2 j (j-1)\frac{(2z)^j}{j!}\\
&\leq 8 z^2 e^{-z}\sum_{j=\log^5 n-1}^{n}  \frac{(2z)^j}{j!}\\
&\leq 8 (\log^8 n) \Pr(\Poi(2z)\geq \log^5 n-1)\\
&\leq \frac{1}{n}.
\end{align*}

\paragraph{Inequality 2:}
\begin{align*}
0\leq &e^{-z}\sum_{j=\log^5 n+1}^{\infty}\frac{z^j}{j!}(j+1)\log(j+1)= 2(\log^8 n)\Pr(\Poi(2z)\geq \log^5 n-1) \leq \frac{1}{n}. 
\end{align*}

\paragraph{Inequality 3:} For $j\leq \log^5 n$,
\begin{align*}
\Abs{ e^{-z} - \Paren{1-\frac{z}{n}}^{n} \frac{n^{\underline{j}}}{n^j}  {\Paren{1-\frac{z}{n}}^{-j}}}
&= \Abs{ e^{-z} - \Paren{1-\frac{z}{n}}^{n} \frac{n^{\underline{j}}}{(n-z)^j}}\\
&\leq  \Abs{ e^{-z} -\Paren{1-\frac{z}{n}}^{n}}+\Paren{1-\frac{z}{n}}^{n}\Abs{1- \frac{n^{\underline{j}}}{(n-z)^j}}\\
&\leq  e^{-z}\frac{z^2}{n}+e^{-z}\Abs{1- \frac{n^{\underline{j}}}{(n-z)^j}}\\
&\leq  e^{-z}\frac{z^2}{n}+e^{-z}\Paren{\Abs{1- \frac{n^{j}}{(n-z)^j}}\lor \Abs{1- \frac{(n- \log^5 n)^{j}}{(n-z)^j}}}\\
&\leq  e^{-z}\frac{z^2}{n}+e^{-z}\Paren{\Abs {\exp\Paren{\frac{zj}{n-z}}-1}\lor \Abs{\frac{(\log^5 n-z) {j}}{n-z}}}\\
&\leq  e^{-z}\frac{z^2}{n}+e^{-z}\Paren{\Abs {\frac{zj}{n-z(j+1)}}\lor \Abs{\frac{(\log^5 n) {j}}{n-z}}}\\
&\leq e^{-z}\frac{2\log^{10} n}{n}.
\end{align*}
Note that the last inequality implies 
\begin{align*}
&\Abs{e^{-z}\sum_{j=0}^{\log^5 n}\frac{z^j}{j!}(j+1)\log(j+1)-\Paren{1-\frac{z}{n}}^{n}\sum_{j=0}^{\log^5 n} (j+1)\log\Paren{{j+1}} \frac{n^{\underline{j}}}{n^j} \frac{z^j}{j!} {\Paren{1-\frac{z}{n}}^{-j}}}\\
&\leq \frac{2\log^{10} n}{n}\cdot e^{-z}\sum_{j=0}^{\log^5 n}\frac{z^j}{j!} (2j(j-1))\\
&\leq  \frac{2\log^{10} n}{n}\cdot 2z^2\\
&\leq \frac{4\log^{18} n}{n}.
\end{align*}
This together with Inequality 1 and 2 proves the desired equality. 
Similarly, we have
\[
n\Paren{h(x)-B_n(h,x)}=-z\log z+e^{-z}\sum_{j=1}^{\infty}\frac{z^j}{j!}j\log j+\tilde{\mathcal{O}}\Paren{\frac{1}{n}},
\]
which completes the proof.
\end{proof}

For $x\in I_n$, let $z_1=(na-1)x$, then $z_1\in I_n':=[0, ac_l \log n]$. Hence by the above lemma,
\[
h_{na}(x)-B_{na-1}(h_{na}, x)=(h(z_1+1)-{f_{2}}(z_1))-(h(z_1)-{f_{1}}(z_1))+\tilde{\mathcal{O}}\Paren{\frac{1}{n}}.
\]

In the next section, we approximate the function ${f_{1}}(z)$ with a degree-$d$ polynomial over $I_n'$. 

\subsection{\fontsize{11pt}{11pt}\selectfont Approximating $\boldsymbol{{f_{1}}(z)}$}
First consider the function
\[
{f_{1}}(z)=-e^{-z}\sum_{j=1}^{\infty}\frac{z^j}{j!}j\log j.
\]
our objective is to approximate $f_1$ with a low-degree polynomial and bound the corresponding error. 
To do this, we first establish some basic properties of ${f_{1}}(z)$ in the next section.
\subsubsection{Properties of $\boldsymbol{{f_{1}}(z)}$}\label{prop_g}
\paragraph{Property 1:}
The function ${f_{1}}(z)$ is a continuous function over $[0,\infty)$, and ${f_{1}}(0)=0$.

\paragraph{Property 2:}
For all $z\geq 0$, the value of $f_1(z)$ is non-negative.

\paragraph{Property 3:}
Denote
\[
u(y):= (y+2)\log(y+2)+y\log y-2(y+1)\log(y+1).
\]
Then, for $z\geq0$,
\[
{f_{1}}''(z)
=-e^{-z} \sum _{t=0}^{\infty } \frac{ z^{t} }{t!} \cdot u(t).
\]
Furthermore, we have
\[
-\log 4\leq {f_{1}}''(z)<0. 
\]
\begin{proof}
We prove the equality first.
\begin{align*}
-{f_{1}}''(z)
&=e^{-z} \sum _{t=1}^{\infty } \frac{(t-1) t^2 z^{t-2} \log (t)}{t!}-2 e^{-z} \sum _{t=1}^{\infty } \frac{t^2 z^{t-1} \log (t)}{t!}+e^{-z} \sum _{t=1}^{\infty } \frac{t z^t \log (t)}{t!}\\
&=e^{-z} \sum _{t=0}^{\infty } \frac{ z^{t} (t+2) \log (t+2)}{t!}-2 e^{-z} \sum _{t=0}^{\infty } \frac{z^{t} (t+1) \log (t+1)}{t!}+e^{-z} \sum _{t=0}^{\infty } \frac{t z^t \log (t)}{t!}\\
&=e^{-z} \sum _{t=0}^{\infty } \frac{ z^{t} }{t!} \cdot u(t).
\end{align*}

To prove the inequality, we need the following lemma.
\begin{Lemma}\label{u_ineq}
For $t\geq 0$, 
\[
\frac{\log 4}{t+1}\geq u(t)\geq \frac{1}{t+1}.
\]
\end{Lemma}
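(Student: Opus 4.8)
The key realization is that $u(y)$ is exactly the second forward difference of $\phi(w):=w\log w$ (with the convention $\phi(0)=0$): that is, $u(y)=\phi(y+2)-2\phi(y+1)+\phi(y)$. Since $\phi$ is $C^2$ on $(0,\infty)$ with $\phi''(w)=1/w$, and $\phi''$ remains integrable down to $0$, the standard integral identity for second differences gives, for every $y\geq 0$,
\[
u(y)=\int_0^1\!\!\int_0^1\phi''(y+s+t)\,ds\,dt=\int_0^1\!\!\int_0^1\frac{ds\,dt}{y+s+t}=\EE\Br{\frac{1}{y+W}},
\]
where $W:=U+V$ for independent uniform $U,V$ on $[0,1]$, so that $\EE[W]=1$ and $W$ takes values in $[0,2]$. (At $y=0$ all the integrals are still finite, and one checks directly $u(0)=2\log 2=\log 4$, using $0\log 0 =0$.) Everything then reduces to estimating this single expectation, which I would do separately for the two bounds.

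\textbf{Lower bound.} This is immediate from convexity of $w\mapsto 1/(y+w)$: by Jensen's inequality,
\[
u(y)=\EE\Br{\frac{1}{y+W}}\geq\frac{1}{y+\EE[W]}=\frac{1}{y+1}.
\]

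\textbf{Upper bound.} Here the plan is to study $\psi(y):=(y+1)\,u(y)$ and show it is non-increasing on $[0,\infty)$; since $\psi(0)=u(0)=\log 4$, this yields $u(y)\leq\log 4/(y+1)$. Differentiating under the integral sign for $y>0$ (legitimate, as $\partial_y(y+s+t)^{-1}$ is bounded there),
\[
u'(y)=-\EE\Br{\frac{1}{(y+W)^2}},\qquad \psi'(y)=u(y)+(y+1)u'(y)=\EE\Br{\frac{W-1}{(y+W)^2}}.
\]
Because $w\mapsto 1/(y+w)^2$ is decreasing and $\EE[W-1]=0$, the correlation (Chebyshev sum) inequality gives $\EE\Br{(W-1)/(y+W)^2}=\mathrm{Cov}\Paren{W,\ 1/(y+W)^2}\leq 0$, so $\psi'\leq 0$ on $(0,\infty)$; together with continuity of $u$ (hence $\psi$) at $0$, this makes $\psi$ non-increasing on the whole of $[0,\infty)$, which finishes the proof.

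\textbf{The hard part.} The genuinely delicate points are (i) justifying the integral representation at the left endpoint $y=0$, where $\phi''$ blows up but is still integrable --- and where in fact $u'(0)=-\infty$, so the monotonicity of $\psi$ on the \emph{closed} interval must be deduced from $\psi'\leq 0$ on the open interval plus continuity, not from evaluating $\psi'(0)$; and (ii) the sign of $\psi'$: although $\EE[W-1]=0$, one must exploit the negative correlation between $W$ and the decreasing weight $1/(y+W)^2$ rather than attempt a naive pointwise estimate. I expect (ii) to be the conceptual crux and (i) to be the bookkeeping nuisance.
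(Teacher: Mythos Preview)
Your proof is correct, and in fact the paper does not supply any proof of this lemma at all: it is simply stated inside the argument for Property~3 and then immediately used. So there is nothing to compare against; you have filled a genuine gap.

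A few remarks on your argument. The integral representation
\[
u(y)=\int_0^1\!\!\int_0^1\frac{ds\,dt}{y+s+t}=\EE\Br{\frac{1}{y+W}},\qquad W=U+V,\ U,V\sim\mathrm{Unif}[0,1]\ \text{i.i.d.},
\]
is exactly the right organizing idea: once you have it, the lower bound is a one-line Jensen (convexity of $w\mapsto 1/(y+w)$ and $\EE[W]=1$), and the upper bound reduces to showing $\psi(y):=(y+1)u(y)$ is non-increasing from its value $\psi(0)=\log 4$. Your computation
\[
\psi'(y)=\EE\Br{\frac{W-1}{(y+W)^2}}=\mathrm{Cov}\!\left(W,\ \frac{1}{(y+W)^2}\right)\le 0
\]
is clean and correct; the last inequality is just the fact that the covariance of a random variable with a decreasing function of itself is nonpositive (take two i.i.d.\ copies and expand $(W_1-W_2)(f(W_1)-f(W_2))\le 0$). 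The endpoint care you flag is also right: at $y=0$ one has $u'(0)=-\infty$, so you correctly argue monotonicity on $(0,\infty)$ and then pass to the closed interval by continuity of $u$ (monotone convergence in the integral gives $u(y)\to\log 4$ as $y\downarrow 0$).

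In short: the paper omits this proof; yours is valid, self-contained, and arguably the natural way to see both inequalities at once.
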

By Lemma~\ref{u_ineq}, we have
\begin{align*}
0
&<e^{-z} \sum _{t=0}^{\infty } \frac{ z^{t} }{t!} \cdot \Paren{\frac{1}{t+1}}\\
&\leq e^{-z} \sum _{t=0}^{\infty } \frac{ z^{t} }{t!} \cdot u(t)\\
&=-{f_{1}}''(z)\\
&\leq e^{-z} \sum _{t=0}^{\infty } \frac{ z^{t} }{t!} \cdot \frac{\log 4}{t+1}\\
&=(\log 4) \frac{(1-e^{-z})}{z}\\
&\leq \log 4.
\end{align*}
\end{proof}
\paragraph{Property 4:}
For $z>0$,
\[
0\leq \frac{{f_{1}}''(z)}{h''(z)}\leq \log 4.
\]
\begin{proof}
Recall that $h(z)=-z\log z$,
\[
h''(z)=-\frac{1}{z}
\]
and thus
\begin{align*}
0
&\leq \frac{{f_{1}}''(z)}{h''(z)}\\
&=e^{-z} \sum _{t=0}^{\infty } \frac{ z^{t+1} }{t!} \cdot u(t)\\
&\leq e^{-z} \sum _{t=0}^{\infty } \frac{ z^{t+1} }{t!} \cdot \frac{\log 4}{t+1}\\
&\leq (\log 4) (1-e^{-z})\\
&\leq \log 4,
\end{align*}
where we have used Lemma~\ref{u_ineq} in the third step.
\end{proof}
\subsubsection{Moduli of smoothness}
In this section, we introduce some basic results in approximation theory~\cite{moduli}.
For any function $f$ over $[0,1]$, let $\varphi(x)=\sqrt{x(1-x)}$, the first- and second- order Ditzian-Totik moduli of smoothness quantities of $f$ are
\[
w^1_{\varphi}(f,t):=\sup\Brace{|f(u)-f(v)|: 0\leq u,v\leq 1, |u-v|\leq t\cdot\varphi\Paren{\frac{u+v}{2}}},
\]
and
\[
w^2_{\varphi}(f,t):=\sup\Brace{\Abs{f(u)+f(v)-2f\Paren{\frac{u+v}{2}}}: 0\leq u,v\leq 1, |u-v|\leq 2t\cdot\varphi\Paren{\frac{u+v}{2}}},
\]
respectively. For any integer $m\geq 1$ and any function $f$ over $[0,1]$, let $P_m$ be the collection of degree-$m$ polynomials, and 
\[
E_m[f]:=\min_{g\in P_m} \max_{x\in[0,1]}|f(x)-g(x)|
\]
be the maximum approximation error of the degree-$m$ min-max polynomial of $f$.
The relation between the best polynomial-approximation error $E_m[f]$ of a continuous function $f$ and the smoothness quantity $w^2_{\varphi}(f,t)$ is established in the following lemma~\cite{moduli}.
\begin{Lemma} \label{bound_moduli}
There are absolute constants $C_1$ and $C_2$ such that for any continuous function $f$ over $[0,1]$ and any $m>2$,
\[
E_m[f]\leq C_1 w^2_\varphi(f, m^{-1}),
\]
and 
\[
\frac{1}{m^2}\sum_{t=0}^{m}(t+1) E_t[f]\geq C_2 w^2_\varphi(f, m^{-1}).
\]
\end{Lemma}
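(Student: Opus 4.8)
The plan is to recognize this as the pair of direct (Jackson-type) and inverse (Bernstein-type) estimates of Ditzian--Totik polynomial approximation, and to route both through the Peetre $K$-functional
\[
K_{2,\varphi}(f,s):=\inf_{g}\Paren{\|f-g\|_\infty+s\,\|\varphi^2 g''\|_\infty},
\]
where the infimum is over functions $g$ on $[0,1]$ with $g'$ locally absolutely continuous. The backbone is the two-sided equivalence, with absolute constants, between $w^2_\varphi(f,t)$ and $K_{2,\varphi}(f,t^2)$ for every $f\in C[0,1]$. Granting that equivalence, the lemma reduces to sandwiching $E_m[f]$ by constant multiples of $K_{2,\varphi}(f,m^{-2})$ from both sides; the small cases $m=0,1,2$ in the second display are handled trivially, since the $E_t[f]$ are nonincreasing in $t$ and $w^2_\varphi(f,m^{-1})\le 4\|f\|_\infty$.

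For the direct inequality $E_m[f]\le C_1 w^2_\varphi(f,m^{-1})$, fix $m>2$ and pick a near-optimal competitor $g$ with $\|f-g\|_\infty+m^{-2}\|\varphi^2 g''\|_\infty\le 2K_{2,\varphi}(f,m^{-2})$. Build a linear operator $L_m$ mapping $C[0,1]$ into degree-$m$ polynomials --- a de la Vallée-Poussin smoothing of a Bernstein--Kantorovich operator serves --- that reproduces affine functions, is uniformly bounded ($\|L_m\|\le C$), and satisfies the smoothing estimate $\|L_m h-h\|_\infty\le C\,m^{-2}\|\varphi^2 h''\|_\infty$ for all smooth $h$. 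Then
\[
E_m[f]\le\|f-L_m f\|_\infty\le(1+\|L_m\|)\|f-g\|_\infty+\|L_m g-g\|_\infty\le C\,K_{2,\varphi}(f,m^{-2}),
\]
and the equivalence converts the right-hand side to $C_1\,w^2_\varphi(f,m^{-1})$.

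For the inverse inequality, take near-best polynomials $P_t$ with $\|f-P_t\|_\infty\le 2E_t[f]$, set $Q_j:=P_{2^{j+1}}-P_{2^j}$, and telescope $f=P_1+\sum_{j\ge0}Q_j$ (the series converges uniformly since $E_t[f]\to0$). The weighted Bernstein inequality $\|\varphi^2 P''\|_\infty\le C\,(\deg P)^2\|P\|_\infty$ applied to each $Q_j$ yields $\|\varphi^2 Q_j''\|_\infty\le C\,4^{j+1}\,(E_{2^{j}}[f]+E_{2^{j+1}}[f])$, while $\|Q_j\|_\infty\le C(E_{2^j}[f]+E_{2^{j+1}}[f])$. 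Taking $g:=P_1+\sum_{j\le j_0}Q_j$ with $j_0\approx\log_2 m$ as the competitor in $K_{2,\varphi}(f,m^{-2})$, and summing the two geometric-type series against the monotone sequence $(E_t[f])$, gives $K_{2,\varphi}(f,m^{-2})\le \frac{C}{m^2}\sum_{t=0}^m(t+1)E_t[f]$; the equivalence then converts the left-hand side to a constant multiple of $w^2_\varphi(f,m^{-1})$, yielding the stated $C_2$.

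The main obstacle is the weighted endpoint machinery rather than the assembly above: establishing the Bernstein-type inequality $\|\varphi^2 P''\|_\infty\le C(\deg P)^2\|P\|_\infty$ with the degenerate weight $\varphi^2(x)=x(1-x)$, and proving both directions of the $K$-functional/modulus equivalence, require Hardy-type inequalities and a careful analysis of the Bernstein--Kantorovich operators near $0$ and $1$. These are exactly the technical core of Ditzian--Totik theory, so in this paper I would invoke them directly from \cite{moduli} and present only the reduction sketched here.
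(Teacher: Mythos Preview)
The paper does not prove this lemma at all: it is stated as a known result and attributed directly to \cite{moduli} (the Ditzian--Totik monograph), with no argument given. Your proposal therefore goes well beyond what the paper does; you have correctly identified the statement as the classical Ditzian--Totik direct/inverse theorem for the second-order $\varphi$-modulus, and your sketch via the $K$-functional equivalence, a Bernstein--Kantorovich-type operator for the Jackson direction, and the dyadic telescoping plus weighted Bernstein inequality for the inverse direction is the standard route and is sound. Since the paper itself simply cites the result, the appropriate comparison is that your write-up supplies an outline the paper omits entirely, and your closing remark --- that in practice one would invoke \cite{moduli} for the endpoint-weighted machinery --- is exactly what the authors did, except they invoked it for the whole lemma.
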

The above lemma shows that $w^2_{\varphi}(f,\boldsymbol{\cdot})$ essentially characterizes $E_{\boldsymbol{\cdot}}[f]$.

\subsubsection{Bounding the error in approximating $\boldsymbol{{f_{1}}(x)}$}\label{app_f1}
For simplicity, we define $x':=(ac_l\log n)\cdot x$ and consider the following function.
\[
{f_{1'}}(x):={f_{1}}((ac_l\log n)\cdot x).
\]
Approximating ${f_{1}}(x')$ over $I_n'=[0,ac_l\log n]$ is equivalent to approximate ${f_{1'}}(x)$ over the unit interval $[0,1]$.
According to Lemma~\ref{bound_moduli}, to bound $E_d[{f_{1'}}]$, it suffices to bound $w^2_{\varphi}({f_{1'}},\boldsymbol{\cdot})$. Specifically, we know that
\[
\min_{g\in P_d} \max_{x\in I_n'}|{f_{1}}(x)-g(x)|=E_d[{f_{1'}}]\leq C_1 w^2_\varphi({f_{1'}}, d^{-1}).
\]
Note that by definition, $w^2_\varphi({f_{1'}}, d^{-1})$ is the solution to the following optimization problem.
\[
\sup_{u,v} \Abs{{f_{1'}}(u)+{f_{1'}}(v)-2{f_{1'}}\Paren{\frac{u+v}{2}}}
\]
subject to 
\[
0\leq u,v\leq 1, |u-v|\leq \frac{2}{d}\cdot\varphi\Paren{\frac{u+v}{2}}.
\]
Consider the optimization constraints first. Following~\cite{jvhw}, we denote $M:=(u+v)/2$ and $\delta:=d^{-1}\sqrt{1/M-1}$. 
The feasible region can be written as
\[
[M-d^{-1}\sqrt{M(1-M)},M+d^{-1}\sqrt{M(1-M)}]\cap[0,1]=[M-\delta M, M+\delta M]\cap [0,1].
\]
By Property 3 in Section~\ref{prop_g}, ${f_{1}}(x')$, or equivalently ${f_{1'}}(x)$, is a strictly concave function.
Therefore, the maximum of $\Abs{f(u)+f(v)-2f({u+v}/{2})}$ is attained at the boundary of the feasible region. 
Noting that 
\[
M-d^{-1}\sqrt{M(1-M)}\geq 0 \iff M\geq \frac{1}{d^2+1}
\]
and
\[
M+d^{-1}\sqrt{M(1-M)}\leq 1 \iff M\leq \frac{d^2}{d^2+1},
\]
we only need to consider the following three cases:
\paragraph{Case 1:} 
\[
u=0, v= 2M, M\in[0, {1}/{(d^2+1)}].
\]
\paragraph{Case 2:}
\[
u=2M-1, v= 1, M\in[{d^2}/{(d^2+1)}, 1].
\]
\paragraph{Case 3:}
\[
u=M-\delta M, v= M+\delta M, M\in[{1}/{(d^2+1)},{d^2}/{(d^2+1)}].
\]
To facilitate our derivations, we need the following lemma.
\begin{Lemma}\label{second_mean}
Let $f\in C^1([a,b])$ have second order derivative in $(a,b)$. There exists $c\in(a,b)$ such that
\[
f(a)+f(b)-2f\Paren{\frac{a+b}{2}}=\frac{1}{4}(b-a)^2\cdot f''(c).
\]
\end{Lemma}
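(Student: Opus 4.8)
The plan is to reduce the centered second difference $f(a)+f(b)-2f\Paren{\frac{a+b}{2}}$ to a second-order Taylor remainder. Write $m:=\frac{a+b}{2}$ and $h:=\frac{b-a}{2}$, so that $a=m-h$ and $b=m+h$, and introduce the symmetrized function $g(s):=f(m+s)+f(m-s)$ for $s\in[0,h]$. Since $f\in C^1([a,b])$, we have $g\in C^1([0,h])$ with $g'(s)=f'(m+s)-f'(m-s)$ and, crucially, $g'(0)=0$; moreover $g$ admits the second derivative $g''(s)=f''(m+s)+f''(m-s)$ on $(0,h)$ because $f''$ exists on $(a,b)$. Applying Taylor's theorem with Lagrange remainder to $g$ at the point $0$ produces some $\theta\in(0,h)$ with $g(h)=g(0)+g'(0)h+\frac12 g''(\theta)h^2$; using $g(h)=f(a)+f(b)$, $g(0)=2f(m)$, and $g'(0)=0$ this gives
\[
f(a)+f(b)-2f\Paren{\frac{a+b}{2}}=\frac{h^2}{2}\Paren{f''(m+\theta)+f''(m-\theta)}=\frac{(b-a)^2}{8}\Paren{f''(m+\theta)+f''(m-\theta)}.
\]
If one prefers to avoid quoting the precise hypotheses of Taylor's theorem, the same identity follows by applying Rolle's theorem twice: subtract from $g(s)-g(0)$ the quadratic $\lambda s^2$ with $\lambda$ chosen so the corrected function vanishes at $s=h$ (it already vanishes at $s=0$), get a zero $\xi\in(0,h)$ of its derivative by Rolle, observe its derivative also vanishes at $0$, and apply Rolle once more on $[0,\xi]$ to get a zero $\theta$ of its second derivative.

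It remains to replace the average $\frac12\Paren{f''(m+\theta)+f''(m-\theta)}$ by a single value $f''(c)$. The arithmetic mean lies between $f''(m-\theta)$ and $f''(m+\theta)$. Since $f''$ is the derivative of $f'$ on $(a,b)$, it enjoys the intermediate-value property by Darboux's theorem, so there is a point $c$ between $m-\theta$ and $m+\theta$ with $f''(c)=\frac12\Paren{f''(m+\theta)+f''(m-\theta)}$; because $\theta<h$ we have $c\in(m-h,m+h)=(a,b)$. Substituting back yields $f(a)+f(b)-2f\Paren{\frac{a+b}{2}}=\frac14(b-a)^2 f''(c)$, which is the claim. (In all the applications in this paper the relevant $f$ is in fact $C^2$, in which case the ordinary intermediate value theorem may be used in place of Darboux's.)

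The one genuinely delicate point is exactly this last step. Under the stated hypotheses $f''$ is merely assumed to exist, not to be continuous, so one cannot pass from an average of two values of $f''$ to a single value via the plain intermediate value theorem and must invoke the Darboux property of derivatives. Everything else is routine: a single application of Taylor's theorem (equivalently, two applications of Rolle's theorem after a quadratic correction), together with the elementary bookkeeping $\theta<h$ that places $c$ in the open interval $(a,b)$.
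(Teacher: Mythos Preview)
Your proof is correct. The paper does not actually supply a proof of this lemma; it is stated as a standard fact and then applied. Your symmetrization $g(s)=f(m+s)+f(m-s)$ followed by a second-order Taylor/Rolle argument and a final Darboux step is a clean and complete justification, and your remark that the stated hypotheses force the Darboux property (rather than the ordinary intermediate value theorem) is exactly the right level of care.
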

First consider Case 1. By the above lemma, there exists $c\in(0,{2}/{(d^2+1)})$ such that
\[
\Abs{{f_{1'}}(0)+{f_{1'}}\Paren{\frac{2}{d^2+1}}-2{f_{1'}}\Paren{\frac{1}{d^2+1}}}
\leq \frac{1}{4}\cdot \Paren{\frac{2}{d^2+1}}^2 \Abs{{f_{1'}}''(c)}
=\Paren{\frac{1}{d^2+1}}^2 \Abs{{f_{1'}}''(c)}.
\]
By definition,
\[
|{f_{1'}}''(x)|=|(ac_l\log n)^2 g_{1}''((ac_l\log n)\cdot x)|\leq (\log 4)(ac_l\log n)^2.
\]
Hence, 
\[
\Paren{\frac{1}{d^2+1}}^2 \Abs{{f_{1'}}''(c)}\leq \mathcal{O}\Paren{\varepsilon^{2}}.
\]
This, together with an analogous argument for Case 2, implies that the objective value is bounded by $\mathcal{O}\Paren{\varepsilon^{2}}$ in both cases. It remains to analyze Case 3. We consider two regimes:
\paragraph{Regime 1:} If $M\leq 4/(d^2+1)$, then $|u-v|=2d^{-1}\sqrt{M(1-M)}\leq 4/d^2$. The above derivations again give us 
\[
 \Abs{{f_{1'}}(u)+{f_{1'}}(v)-2{f_{1'}}\Paren{\frac{u+v}{2}}}\leq \mathcal{O}\Paren{\varepsilon^{2}}.
\]
\paragraph{Regime 2:} If $4/(d^2+1)\leq M\leq {d^2}/{(d^2+1)}$, then 
\[
M-\delta M=M\Paren{1-\frac{\sqrt{M^{-1}-1}}{d}}\geq M\Paren{1-\frac{\sqrt{(d^2+1)-4}}{2d}}\geq \frac{M}{2}.
\]
By Lemma~\ref{second_mean}, there exists $c\in(M-\delta M, M+\delta M)\subseteq(M/2, 3M/2)$ such that
\[
\Abs{{f_{1'}}(u)+{f_{1'}}(v)-2{f_{1'}}\Paren{\frac{u+v}{2}}}\leq \frac{1}{4}\cdot \Paren{2 \frac{1}{d} \sqrt{M(1-M)}}^2\cdot \Abs{{f_{1'}}''(c)}.
\]
By Property 4 in Section~\ref{prop_g}, 
\[
|{f_{1'}}''(c)|=|(ac_l\log n)^2 f_{1}''((ac_l\log n)\cdot c)|\leq (ac_l\log n)^2\cdot (\log 4) \cdot \frac{1}{(ac_l\log n)\cdot c}\leq (\log 8) \cdot \frac{ac_l\log n}{M}.
\]
This immediately implies
\[
\frac{1}{4}\cdot \Paren{2 \frac{1}{d} \sqrt{M(1-M)}}^2\cdot \Abs{{f_{1'}}''(c)}\leq \frac{1}{d^2} M(1-M)\cdot (\log 8) \cdot \frac{ac_l\log n}{M}\leq (\log 8)\cdot \frac{c_l \varepsilon}{c_s^2}.
\]
Consolidating all the previous results, we get
\[
\min_{g\in P_d} \max_{x\in I_n'}|{f_{1}}(x)-g(x)|\leq \mathcal{O}\Paren{\varepsilon}.
\]
Similarly, for the function ${f_{2}}$, we also have
\[
\min_{g\in P_d} \max_{x\in I_n'}|{f_{2}}(x)-g(x)|\leq \mathcal{O}\Paren{\varepsilon}.
\]
In the next section, we use these two inequalities to analyze our refined entropy estimator.
\subsection{Constructing the refined estimator}\label{bias_correction}
For our purpose, we need to approximate $B_{na-1}(h_{na},x)-h_{na}(x)$ over the interval $I_n=[0,c_l\log n/n]$ by a degree-$d$ polynomial. 
By Lemma~\ref{app_gz}, for $x\in I_n$ and $z_1:=(na-1)x\in I_n'=[0, a c_l \log n]$,
\[
h_{na}(x)-B_{na-1}(h_{na}, x)=(h(z_1+1)-{f_{2}}(z_1))-(h(z_1)-{f_{1}}(z_1))+\tilde{\mathcal{O}}\Paren{\frac{1}{n}}.
\]
By the results in~\cite{approxconst}, 
\begin{align*}
\min_{g\in P_d} \max_{x\in I_n'}|h(x)-g(x)|
&= (a c_l \log n) \min_{g\in P_d} \max_{x\in [0,1]}|h(x)-g(x)|\\
&\leq \mathcal{O}\Paren{\frac{a c_l \log n }{(c_s\log n)^2}}\\
&\leq \mathcal{O}\Paren{\varepsilon}
\end{align*}
and
\[
\min_{g\in P_d} \max_{x\in I_n'}|h(x+1)-g(x)|\leq \mathcal{O}\Paren{\varepsilon}.
\]
Combining these bounds with the last two inequalities in the last section, we get
\[
\min_{g\in P_{d-1}} \max_{x\in I_n}|(h_{na}(x)-B_{na-1}(h_{na}, x))-g(x)|\leq \mathcal{O}\Paren{\varepsilon}.
\]
Let $\tilde{g}(x)$ be the min-max polynomial that achieves the above minimum. By the derivations in Section~\ref{approx_der}, the degree-$(d-1)$ polynomial $\tilde{h}_{na}(x)$ satisfies
\[
\max_{x\in I_n}|h_{na}(x)-\tilde{h}_{na}(x)|\leq \mathcal{O}\Paren{\varepsilon}.
\]
Denote $\tilde{h}^*(x):=-\tilde{g}(x)+\tilde{h}_{na}(x)$, and note that by definition, $B_{na}'(h,x)=B_{na-1}(h_{na}, x)$. The triangle inequality implies 
\[
\max_{x\in I_n} |B_{na}'(h,x)-\tilde{h}^*(x)|=\max_{x\in I_n}|B_{na-1}(h_{na}, x)-\tilde{h}^*(x)|\leq \mathcal{O}\Paren{\varepsilon}.
\]
By a simple argument, the degree-$d$ polynomial
\[
\tilde{H}^*(x):= \int_0^{x}\tilde{h}^*(t) dt,
\]
approximates $B_{na}(h, x)$ with the following pointwise error guarantee. 
\begin{Lemma}
For any $x\in I_n$,
\[
|B_{na}(h,x)-\tilde{H}^*(x)|\leq \mathcal{O}\Paren{x\epsilon}.
\]
\end{Lemma}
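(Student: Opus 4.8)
The plan is to mimic exactly the argument already used for Lemma~\ref{Est_Error}, now with $\tilde h^*$ in place of $\tilde h_{na}$. The key point is that $\tilde H^*$ has zero constant term by construction (it is defined as $\int_0^x \tilde h^*(t)\,dt$), and $B_{na}(h,0)=0$ as well, so the difference $B_{na}(h,x)-\tilde H^*(x)$ vanishes at $x=0$ and can be written as an integral of the difference of derivatives. Concretely, I would write
\[
B_{na}(h,x)-\tilde H^*(x)=\int_0^x\Paren{B_{na}'(h,t)-\tilde h^*(t)}\,dt,
\]
which is legitimate for $x\in I_n$ since both $B_{na}(h,\cdot)$ and $\tilde H^*$ are polynomials (hence $C^1$) and $I_n=[0,c_l\log n/n]$ is an interval containing $0$.

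Next I would invoke the uniform derivative bound established just before the lemma statement, namely
\[
\max_{x\in I_n}\Abs{B_{na}'(h,x)-\tilde h^*(x)}\le\mathcal{O}\Paren{\varepsilon}.
\]
Combining this with the integral representation and taking absolute values through the integral gives, for every $x\in I_n$,
\[
\Abs{B_{na}(h,x)-\tilde H^*(x)}\le\int_0^x\Abs{B_{na}'(h,t)-\tilde h^*(t)}\,dt\le\int_0^x\mathcal{O}\Paren{\varepsilon}\,dt=\mathcal{O}\Paren{x\varepsilon},
\]
which is precisely the claimed bound. The only subtlety is matching the earlier statement's form $|B_{na}(h,x)-\tilde H^*(x)|\le\mathcal{O}(x\epsilon)$, which is the same up to the $\epsilon$ vs.\ $\varepsilon$ notational discrepancy in the source; I would simply use $\varepsilon$ throughout and trust the asymptotic notation to absorb absolute constants.

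There is essentially no main obstacle here: this is a one-line integration argument, entirely parallel to Lemma~\ref{Est_Error}. The only thing one must be slightly careful about is that the derivative bound $\max_{x\in I_n}|B_{na}'(h,x)-\tilde h^*(x)|\le\mathcal{O}(\varepsilon)$ is exactly the inequality displayed immediately above the lemma, obtained by the triangle inequality from $\max_{x\in I_n}|h_{na}(x)-\tilde h_{na}(x)|\le\mathcal{O}(\varepsilon)$, $\max_{x\in I_n}|(h_{na}-B_{na-1}(h_{na},\cdot))-\tilde g|\le\mathcal{O}(\varepsilon)$, and the identity $B_{na}'(h,x)=B_{na-1}(h_{na},x)$ together with $\tilde h^*=-\tilde g+\tilde h_{na}$; all of this is already in hand, so the proof reduces to citing it and integrating.
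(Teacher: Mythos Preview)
Your proposal is correct and is exactly the ``simple argument'' the paper alludes to: integrate the already-established derivative bound $\max_{x\in I_n}|B_{na}'(h,x)-\tilde h^*(x)|\le\mathcal{O}(\varepsilon)$ from $0$ to $x$, using $B_{na}(h,0)=\tilde H^*(0)=0$, precisely as in Lemma~\ref{Est_Error}.
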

In other words, $\tilde{H}^*(x)$ is a degree-$d$ polynomial that well approximates $B_{na}(h,x)$ pointwisely.

Next we argue that the coefficients of $\tilde{H}^*(x)$ can not be too large. For notational convenience, let $\tilde{h}^*(x)=\sum_{v=0}^{d-1}a_v x^v$.
By Corollary~\ref{Bern_error_h_n}, for $x\in I_n$,
\[
|h_{na}(x)-B_{na-1}(h_{na}, x)|\leq 1.
\]
Furthermore, for $x\in I_n$, $h_{na}(x)$ is an increasing function and thus
\[
|h_{na}(x)|
= \max\Brace{|h_{na}(0)|,h_{na}\Paren{\frac{c_l(\log n)}{n}}}
\leq \mathcal{O}(\log n). 
\]
Hence, over $I_n$,
\[
|\tilde{h}^*(x)|\leq \mathcal{O}(\log n).
\]
Due to the boundedness of $\tilde{h}^*(x)$, its coefficients cannot be too large:
\[
|a_v|\leq  \mathcal{O}\Paren{2^{4.5d}\log n}  \Paren{\frac{n}{c_l\log n}}^v.
\]
Write $\tilde{H}^*(x)$ as
$\tilde{H}^*(x) =  \sum_{t=1}^{d} a_t' x^t$.
Then by $\tilde{H}^*(x):= \int_0^{x}\tilde{h}^*(t) dt$ and the above bound on $|a_v|$, 
\[
|a_t'|\leq \Paren{\frac{n}{c_l\log n}}^{t-1} \mathcal{O}(2^{4.5d}).
\]
The construction of the new entropy estimator follows by replacing $\tilde{H}_{na}(x)$ with $\tilde{H}^*(x)$ in Section~\ref{Est_const}. The rest of the proof is almost the same as that in the main paper and thus is omitted.

\section{Competitive estimators for general additive properties}\label{comp_uniformity}
Consider an arbitrary real function $f:[0,1]\to \mathbb{R}$. Without loss of generality, we assume that $f(0)=0$. According to the previous derivations, we can write $B_n'(f,x)$ as
\[
B_n'(f,x):= \sum_{j=0}^{n-1}n\Paren{f\Paren{\frac{j+1}{n}}-f\Paren{\frac{j}{n}}}\binom{n-1}{j}x^{j}(1-x)^{(n-1)-j}.
\]
Our objective is to approximate $B_{na}'(f,x)$ with a low degree polynomial. For now, let us assume that $f$ is a $1$-Lipschitz function.
For $x\in [0,1]$, set $z=nx$. Denote $g_{n+1}(j):= (n+1) f\Paren{\frac{j}{n+1}}$, 
\[
f_{1,n+1}(z):=e^{-z}\sum_{j=0}^{\infty}g_{n+1}(j+1)\frac{z^j}{j!},
\]
and 
\[
f_{2,n+1}(z):=e^{-z}\sum_{j=0}^{\infty}g_{n+1}(j)\frac{z^j}{j!}
\]
The following lemma relates $f_{1,n+1}(z)$ and $f_{2,n+1}(z)$ to $B_{n+1}'(f,x)$.
\begin{Lemma}\label{app_gz1}
For $x\in[0,{\log^4 n}/{n}]$, 
\[
B_{n+1}'(f,x)=f_{1,n+1}(z)-f_{2,n+1}(z)+\tilde{\mathcal{O}}\Paren{\frac{1}{n}}.
\]
\end{Lemma}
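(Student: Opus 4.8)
The plan is to Poissonize the degree-$n$ binomial sum that defines $B_{n+1}'(f,x)$, mirroring the proof of Lemma~\ref{app_gz} but using the $1$-Lipschitz hypothesis to collapse its more delicate tail estimates. With $g_{n+1}(j):=(n+1)f(j/(n+1))$ as in the statement, the derivative formula displayed above reads
\[
B_{n+1}'(f,x)=\sum_{j=0}^{n}\Delta_j\binom{n}{j}x^{j}(1-x)^{n-j},\qquad \Delta_j:=g_{n+1}(j+1)-g_{n+1}(j),
\]
and the Lipschitz bound gives $\Abs{\Delta_j}=(n+1)\Abs{f(\tfrac{j+1}{n+1})-f(\tfrac{j}{n+1})}\le 1$ for every $j$. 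Meanwhile $f_{1,n+1}(z)-f_{2,n+1}(z)=e^{-z}\sum_{j=0}^{\infty}\Delta_j\,\tfrac{z^{j}}{j!}$, so the claim amounts to comparing the binomial and Poisson weightings of the same coefficient sequence $(\Delta_j)$. Substituting $x=z/n$ rewrites the binomial weight as $\binom{n}{j}x^{j}(1-x)^{n-j}=\tfrac{n^{\underline{j}}}{n^{j}}\tfrac{z^{j}}{j!}\bigl(1-\tfrac{z}{n}\bigr)^{n-j}$. (To make $f_{1,n+1},f_{2,n+1}$ literally well defined we may extend $f$ to a $1$-Lipschitz function on $[0,\infty)$; this only affects arguments $\ge 1$, i.e.\ indices $j\gtrsim n$ of negligible Poisson weight, so the extension is immaterial.)

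First I would truncate both sums at $j=\log^{5}n$. Since $x\le\log^{4}n/n$ we have $z=nx\le\log^{4}n$, so Lemma~\ref{tailprob}, applied with mean at most $\log^{4}n$ and threshold $\log^{5}n$, shows $\PP(\mathrm{Bin}(n,x)>\log^{5}n)$ and $\PP(\Poi(z)>\log^{5}n)$ are both $\tilde{\mathcal{O}}(1/n)$ (in fact super-polynomially small). Combined with $\Abs{\Delta_j}\le 1$, the discarded tails of $B_{n+1}'(f,x)$ and of $f_{1,n+1}(z)-f_{2,n+1}(z)$ together contribute only $\tilde{\mathcal{O}}(1/n)$.

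For the head $j\le\log^{5}n$ I would invoke the estimate already proved inside Lemma~\ref{app_gz} (its ``Inequality~3''): for $j\le\log^{5}n$ and $z\le\log^{4}n$,
\[
\Abs{\,\frac{n^{\underline{j}}}{n^{j}}\Bigl(1-\frac{z}{n}\Bigr)^{n-j}-e^{-z}\,}\le e^{-z}\cdot\frac{2\log^{10}n}{n},
\]
obtained by combining $(1-z/n)^{n}=e^{-z}(1+O(z^{2}/n))$, $\tfrac{n^{\underline{j}}}{n^{j}}=1+O(j^{2}/n)$, and $(1-z/n)^{-j}=1+O(jz/n)$, each error being $\tilde{\mathcal{O}}(1/n)$ on this range. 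Hence $\Abs{\binom{n}{j}x^{j}(1-x)^{n-j}-e^{-z}\tfrac{z^{j}}{j!}}\le e^{-z}\tfrac{z^{j}}{j!}\,\tilde{\mathcal{O}}(1/n)$, and summing against $(\Delta_j)$ over $j\le\log^{5}n$ gives at most $\tilde{\mathcal{O}}(1/n)\sum_{j\ge 0}e^{-z}\tfrac{z^{j}}{j!}=\tilde{\mathcal{O}}(1/n)$. Adding the head and tail contributions yields $B_{n+1}'(f,x)=f_{1,n+1}(z)-f_{2,n+1}(z)+\tilde{\mathcal{O}}(1/n)$.

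The only step that truly needs care — and the reason the Lipschitz assumption enters — is the tail: in the entropy setting the increments grew like $\log j$, forcing the separate weighted-tail estimates (Inequalities~1 and~2 there), whereas here $\Abs{\Delta_j}\le 1$ reduces the whole argument to one Chernoff bound plus the head comparison, which is verbatim ``Inequality~3'' of Lemma~\ref{app_gz}. I therefore expect no genuine obstacle; this is a direct, slightly streamlined transcription of that earlier Poissonization.
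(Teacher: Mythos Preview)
Your argument is correct, and in fact slightly cleaner than the paper's. The paper does not pass to the differences $\Delta_j$: it Poissonizes each of the two sums $\sum_j g_{n+1}(j+1)\binom{n}{j}x^j(1-x)^{n-j}$ and $\sum_j g_{n+1}(j)\binom{n}{j}x^j(1-x)^{n-j}$ separately, using $|g_{n+1}(j+1)|\le j+1$ (from Lipschitz plus $f(0)=0$) in place of the entropy bound $(j+1)\log(j+1)$, and then subtracts. Because those coefficients grow linearly in $j$, its tail estimates (the analogues of Inequalities~1 and~2 in Lemma~\ref{app_gz}) still need the $j(j-1)$ index-shift trick before invoking the Poisson tail. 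By contrast, your decomposition via $\Delta_j$ with $|\Delta_j|\le1$ collapses both tail bounds to a single Chernoff application, exactly as you say; the head comparison is then literally Inequality~3 in either approach. The price you pay is minor: you must extend $f$ Lipschitz-continuously beyond $[0,1]$ to make the Poisson series defining $f_{1,n+1}-f_{2,n+1}$ meaningful (you note this), whereas the paper implicitly faces the same issue but never comments on it. Overall the two proofs share the same skeleton---truncate at $\log^5 n$, Chernoff the tails, and use the uniform weight comparison of Inequality~3 on the head---but your choice to difference first genuinely simplifies the tail step.
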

\begin{proof}
By definition $z=nx$, hence $z\in[0, \log^4 n]$. We have
\begin{align*}
\sum_{j=0}^{n}(n+1) f\Paren{\frac{j+1}{n+1}}\binom{n}{j}x^{j}(1-x)^{n-j}
&=\sum_{j=0}^{n} g_{n+1}(j+1) \binom{n}{j} z^j  \frac{(n-z)^{n-j}}{n^n}\\
&=\Paren{1-\frac{z}{n}}^{n}\sum_{j=0}^{n} g_{n+1}(j+1) \binom{n}{j} z^j {(n-z)^{-j}}\\
&=\Paren{1-\frac{z}{n}}^{n}\sum_{j=0}^{n} g_{n+1}(j+1)  \frac{n^{\underline{j}}}{n^j} \frac{z^j}{j!} {\Paren{1-\frac{z}{n}}^{-j}}\\
&=e^{-z}\sum_{j=0}^{\infty}g_{n+1}(j+1)\frac{z^j}{j!}+\tilde{\mathcal{O}}\Paren{\frac{1}{n}}\\
&=f_{1,n+1}(z)+\tilde{\mathcal{O}}\Paren{\frac{1}{n}}.
\end{align*}
The second last equality is the most non-trivial step. To establish this equality, we need the following the three inequalities. 

\paragraph{Inequality 1:}
\begin{align*}
0\leq &\Paren{1-\frac{z}{n}}^{n}\sum_{j=\log^5 n+1}^{n} |g_{n+1}(j+1)| \frac{n^{\underline{j}}}{n^j} \frac{z^j}{j!} {\Paren{1-\frac{z}{n}}^{-j}}\\
&=\Paren{1-\frac{z}{n}}^{n}\sum_{j=\log^5 n+1}^{n} (j+1) \frac{n^{\underline{j}}}{2^j(n-z)^j} \frac{(2z)^j}{j!}\\
&\leq e^{-z}\sum_{j=\log^5 n+1}^{n} (j+1) \frac{(2z)^j}{j!}\\
&\leq e^{-z}\sum_{j=\log^5 n+1}^{n}  2 j (j-1)\frac{(2z)^j}{j!}\\
&\leq 8 z^2 e^{-z}\sum_{j=\log^5 n-1}^{n}  \frac{(2z)^j}{j!}\\
&\leq 8 (\log^8 n) \Pr(\Poi(2z)\geq \log^5 n-1)\\
&\leq \frac{1}{n}.
\end{align*}

\paragraph{Inequality 2:}
\begin{align*}
0\leq &e^{-z}\sum_{j=\log^5 n+1}^{\infty}|g_{n+1}(j+1)|\frac{z^j}{j!}\leq e^{-z}\sum_{j=\log^5 n+1}^{\infty}(j+1)\frac{z^j}{j!} \leq \frac{1}{n}. 
\end{align*}

\paragraph{Inequality 3:} For $j\leq \log^5 n$,
\begin{align*}
\Abs{ e^{-z} - \Paren{1-\frac{z}{n}}^{n} \frac{n^{\underline{j}}}{n^j}  {\Paren{1-\frac{z}{n}}^{-j}}}
&= \Abs{ e^{-z} - \Paren{1-\frac{z}{n}}^{n} \frac{n^{\underline{j}}}{(n-z)^j}}\\
&\leq  \Abs{ e^{-z} -\Paren{1-\frac{z}{n}}^{n}}+\Paren{1-\frac{z}{n}}^{n}\Abs{1- \frac{n^{\underline{j}}}{(n-z)^j}}\\
&\leq  e^{-z}\frac{z^2}{n}+e^{-z}\Abs{1- \frac{n^{\underline{j}}}{(n-z)^j}}\\
&\leq  e^{-z}\frac{z^2}{n}+e^{-z}\Paren{\Abs{1- \frac{n^{j}}{(n-z)^j}}\lor \Abs{1- \frac{(n- \log^5 n)^{j}}{(n-z)^j}}}\\
&\leq  e^{-z}\frac{z^2}{n}+e^{-z}\Paren{\Abs {\exp\Paren{\frac{zj}{n-z}}-1}\lor \Abs{\frac{(\log^5 n-z) {j}}{n-z}}}\\
&\leq  e^{-z}\frac{z^2}{n}+e^{-z}\Paren{\Abs {\frac{zj}{n-z(j+1)}}\lor \Abs{\frac{(\log^5 n) {j}}{n-z}}}\\
&\leq e^{-z}\frac{2\log^{10} n}{n}.
\end{align*}
Note that the last inequality implies 
\begin{align*}
&\Abs{e^{-z}\sum_{j=0}^{\log^5 n}\frac{z^j}{j!}g_{n+1}(j+1)-\Paren{1-\frac{z}{n}}^{n}\sum_{j=0}^{\log^5 n} g_{n+1}(j+1) \frac{n^{\underline{j}}}{n^j} \frac{z^j}{j!} {\Paren{1-\frac{z}{n}}^{-j}}}\\
&\leq \frac{2\log^{10} n}{n}\cdot e^{-z}\sum_{j=0}^{\log^5 n}\frac{z^j}{j!} (j+1)\\
&\leq  \frac{2\log^{10} n}{n}\cdot (1+2z)\\
&\leq \frac{5\log^{14} n}{n}.
\end{align*}
This together with Inequality 1 and 2 proves the desired equality. 
Similarly, we have
\[
\sum_{j=0}^{n}(n+1) f\Paren{\frac{j}{n+1}}\binom{n}{j}x^{j}(1-x)^{n-j}
=f_{2,n+1}(z)+\tilde{\mathcal{O}}\Paren{\frac{1}{n}}.
\]
This completes the proof.
\end{proof}
Re-define $z:=(na-1) x$. Lemma~\ref{app_gz1} immediately implies that for $x\in I_n=[0,c_l(\log n)/n]\subseteq[0,{(\log^4 (na-1))}/{(na-1)}]$, 
\[
B_{na}'(f,x)=f_{1,na}(z)-f_{2,na}(z)+\tilde{\mathcal{O}}\Paren{\frac{1}{na}}.
\]
Note that in this case $z\in I_n' =[0, a c_l \log n]$. Let $t_{na}(z):= f_{1,na}(z)-f_{2,na}(z)$ and $r_{na}(j):=g_{na}(j+2)+g_{na}(j)-2g_{na}(j+1)$. Then direct calculation yields,
\begin{align*}
t''_{na}(z)
&=e^{-z}\sum_{j=0}^{\infty}r_{na}(j+1)\frac{z^j}{j!}-e^{-z}\sum_{j=0}^{\infty}r_{na}(j)\frac{z^j}{j!}\\
&=e^{-z}\sum_{j=0}^{\infty}r_{na}(j+1)\frac{z^j}{j!}-e^{-z}r_{na}(0)-\sum_{j=0}^{\infty}r_{na}(j+1)\frac{z^{j+1}}{(j+1)!}\\
&=e^{-z}\sum_{j=0}^{\infty}r_{na}(j+1)\Paren{\frac{z^j}{j!}-\frac{z^{j+1}}{(j+1)!}}-e^{-z}r_{na}(0)
\end{align*}
Since we assume that $f$ is $1$-Lipschitz, $|r_{na}(j)|\leq 2$. Therefore, for $z\in I_n'$,
\begin{align*}
|t''_{na}(z)|
&\leq e^{-z}\sum_{j=0}^{\infty}|r_{na}(j+1)|\Paren{\frac{z^j}{j!}+\frac{z^{j+1}}{(j+1)!}}+e^{-z}|r_{na}(0)|\leq 6.
\end{align*}
We can bound each individual term by the following lemma.
\begin{Lemma}\label{boundl}
For $j\geq 1$ and $z\geq 0$, we have
\[
\Abs{e^{-z} \Paren{\frac{z^{j}}{j!}-\frac{z^{j+1}}{(j+1)!}}}\leq \frac{1}{\sqrt{2\pi}((j+1)-\sqrt{j+1})}
\]
and
\[
\Abs{e^{-z} \Paren{\frac{z^{j}}{j!}-\frac{z^{j+1}}{(j+1)!}}}\leq \frac{5}{z}.
\]
\end{Lemma}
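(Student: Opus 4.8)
The plan is to write the quantity as a difference of Poisson probabilities and exploit two elementary identities. Set $P_k(z):=e^{-z}z^{k}/k!$, so that $e^{-z}\big(z^{j}/j!-z^{j+1}/(j+1)!\big)=P_j(z)-P_{j+1}(z)=:f(z)$, and recall the identities $P_k'(z)=P_{k-1}(z)-P_k(z)$ for $k\ge1$ and $P_{j+1}(z)=\frac{z}{j+1}P_j(z)$. In particular $f(z)=P_j(z)\big(1-\frac{z}{j+1}\big)$. Both claimed bounds will be proved by locating the extrema of the relevant single‑variable functions and controlling them with a Stirling lower bound.

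For the first bound I would differentiate: $f'(z)=P_{j-1}(z)-2P_j(z)+P_{j+1}(z)=\frac{e^{-z}z^{j-1}}{(j+1)!}\big(z^{2}-2(j+1)z+j(j+1)\big)$, whose only positive roots are $z_{\pm}=(j+1)\pm\sqrt{j+1}$. Since $f(0)=0$ (here $j\ge1$) and $f(z)\to0$ as $z\to\infty$, the supremum of $|f|$ over $[0,\infty)$ is attained at $z_-$ or $z_+$. At these points $1-\frac{z_\pm}{j+1}=\mp\frac1{\sqrt{j+1}}$, so $|f(z_\pm)|=P_j(z_\pm)\big/\sqrt{j+1}$. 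Bounding $P_j(z_\pm)\le\max_{z\ge0}P_j(z)=j^{j}e^{-j}/j!\le(2\pi j)^{-1/2}$ via $j!\ge\sqrt{2\pi j}\,(j/e)^{j}$ gives $|f(z)|\le\big(\sqrt{2\pi j}\,\sqrt{j+1}\big)^{-1}$, which is at most $\big(\sqrt{2\pi}\,((j+1)-\sqrt{j+1})\big)^{-1}=\big(\sqrt{2\pi}\,\sqrt{j+1}(\sqrt{j+1}-1)\big)^{-1}$ because $\sqrt{j+1}-1\le\sqrt{j}$.

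For the second bound I would multiply by $z$: using $P_{j+1}(z)=\frac{z}{j+1}P_j(z)$ one gets $z\,f(z)=\big((j+1)-z\big)P_{j+1}(z)$, so it suffices to bound $\phi(z):=|z-m|\,P_m(z)$ over $z\ge0$, where $m:=j+1\ge2$. Now $\phi$ vanishes at $z=0$, at $z=m$, and as $z\to\infty$, and is smooth on $(0,m)$ and on $(m,\infty)$; hence if $\phi\not\equiv0$ its maximum is attained at an interior critical point of one of these intervals, where a short computation of $\phi'$ forces $(z-m)^{2}=z$. At such a point $|z-m|=\sqrt z$ and $\phi(z)=\sqrt z\,P_m(z)=e^{-z}z^{m+1/2}/m!$, a function maximized at $z=m+\frac12$; combining Stirling with $(1+\frac1{2m})^{m}\le e^{1/2}$ and $m+\frac12\le2m$ bounds it by $\pi^{-1/2}$. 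Therefore $z\,|f(z)|\le\pi^{-1/2}\le5$, i.e.\ $|f(z)|\le5/z$ (the case $z=0$ being vacuous).

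The step I expect to be the main obstacle is this last bound: since $|z-m|$ is unbounded while $P_m(z)$ is only of order $m^{-1/2}$, multiplying worst cases fails, and one must use the cancellation that first‑order optimality forces. The useful observation is that at any interior maximizer of $|z-m|\,P_m(z)$ the condition $(z-m)^2=z$ holds, which collapses the two competing factors into the single‑peaked function $\sqrt z\,P_m(z)$ that Stirling controls by an absolute constant; everything else is routine calculus.
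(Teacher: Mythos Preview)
Your argument is correct. For the first inequality you follow essentially the paper's route---differentiate, locate the extrema at $z_\pm=(j+1)\pm\sqrt{j+1}$, and finish with Stirling---with the minor simplification that you bound $P_j(z_\pm)$ by the mode value $\max_z P_j(z)\le(2\pi j)^{-1/2}$ rather than evaluating at $z_\pm$ directly; the final comparison $\sqrt{j+1}-1\le\sqrt j$ then matches the stated bound.

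For the second inequality your approach is genuinely different from the paper's. The paper writes $q_2(z)=z\,q_1(z)$, computes the critical points $z_{3,4}=\big((2j+3)\pm\sqrt{4j+5}\big)/2$ of $q_2$, bounds them crudely by $|z_{3,4}|\le 2(j+2)$, and then multiplies this by the first bound to obtain $|q_2|\le \tfrac{2(j+2)}{\sqrt{2\pi}\big((j+1)-\sqrt{j+1}\big)}\le 5$. Your route, rewriting $z f(z)=\big((j+1)-z\big)P_{j+1}(z)$ and observing that any interior maximizer of $|z-m|\,P_m(z)$ satisfies $(z-m)^2=z$, collapses the product to $\sqrt z\,P_m(z)$ and gives the much sharper absolute constant $\pi^{-1/2}$; it also avoids computing the roots of the quadratic $z^2-(2j+3)z+(j+1)^2$ and does not rely on the first inequality at all. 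The trade-off is that the paper's argument is a one-line corollary of the first part, whereas yours requires a separate optimality analysis---but yours is cleaner and yields the better constant.
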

\begin{proof}
Let us denote 
\[
q_1(z):=e^{-z} \Paren{\frac{z^{j}}{j!}-\frac{z^{j+1}}{(j+1)!}}.
\]
The derivative of $q_1(z)$ is 
\begin{align*}
q'_1(z)
& = -e^{-z} \frac{z^{j}}{j!} +e^{-z} \frac{z^{j-1}}{(j-1)!}+e^{-z}\frac{z^{j+1}}{(j+1)!}-e^{-z} \frac{z^{j}}{j!}\\
& = e^{-z} \frac{z^{j-1}}{(j+1)!}\Paren{-2(j+1)z+j(j+1)+z^2}.
\end{align*}
Set $q'_1(z)=0$ and note that $q_1(0)=\lim_{z\to \infty} q_1(z) =0$, the maximum of $|q_1(z)|$ is attained at $z_1:=(j+1)-\sqrt{j+1}$ or $z_2:=(j+1)+\sqrt{j+1}$. We consider $z_1$ first.
\begin{align*}
|q_1(z_1)|
&=e^{-z_1} \frac{z_1^{j+1}}{(j+1)!}\Abs{\frac{j+1}{z_1}-1}\\
&\leq e^{-(j+1)+\sqrt{j+1}} ((j+1)-\sqrt{j+1})^{j+1}\frac{e^{j+1}}{\sqrt{2\pi}(j+1)^{j+1+1/2}}\frac{1}{\sqrt{j+1}-1}\\
&\leq e^{\sqrt{j+1}} \Paren{1-\frac{1}{\sqrt{j+1}}}^{j+1}\frac{1}{\sqrt{2\pi}\sqrt{j+1}}\frac{1}{\sqrt{j+1}-1}\\
&\leq \frac{1}{\sqrt{2\pi}((j+1)-\sqrt{j+1})}.
\end{align*}
Similarly, for $z_2$, we also have $|q_1(z_1)|\leq {1}/{(\sqrt{2\pi}((j+1)+\sqrt{j+1}))}$.
Analogously, let us denote 
\[
q_2(z):=e^{-z} \Paren{\frac{z^{j+1}}{j!}-\frac{z^{j+2}}{(j+1)!}}.
\]
The derivative of $q_2(z)$ is 
\begin{align*}
q'_2(z)
& = e^{-z} \frac{z^j}{(j+1)!}\Paren{-(2j+3)z+(j+1)^2+z^2}.
\end{align*}
Set $q'_2(z)=0$ and note that $q_2(0)=\lim_{z\to \infty} q_2(z) =0$, the maximum of $|q_2(z)|$ is attained at $z_3:=((2j+3)-\sqrt{4j+5})/2$ or $z_4:=((2j+3)+\sqrt{4j+5})/2$. Note that $|z_3|, |z_4|\leq 2(j+2)$. Therefore,
\begin{align*}
|q_2(z_3)|= |z_3| |q_1(z_3)|\leq 2(j+2) \max_{z}|q_1(z)|
\leq \frac{2(j+2)}{\sqrt{2\pi}((j+1)-\sqrt{j+1})}\leq 5, \forall j\geq 1.
\end{align*}
The same proof also shows that $|q_2(z_4)|\leq 5$.
\end{proof}
\subsection{$\ell_1$-distance}
Now let us focus on the problem of estimating the $\ell_1$-distance between the unknown distribution $\vec{p}\in \Delta_k$ and a given distribution $\vec{q}\in \Delta_k$. Since our estimator is constructed symbol by symbol, it is sufficient to consider the problem of approximating $\ell_q(x)=|x-q|-q$. 

Set $g_{n+1}(j):= (n+1) \ell_q\Paren{\frac{j}{n+1}}$. We note that $r_{na}(j)$ equals $0$ for all but at most two different values of $j$. Therefore, by Lemma~\ref{boundl}, for all $z\in I_n'$,
we have $|t''_{na}(z)|\leq \mathcal{O}(1)$, and
 $|t''_{na}(z)|\leq \mathcal{O}(1)z^{-1}$,
 where the first and second inequalities resemble Property 3 and 4 in Section~\ref{prop_g}, respectively. Using arguments similar to those in Section~\ref{app_f1} and~\ref{bias_correction}, we can construct an estimator for $D_{\vec{q}}(\vec{p})$ that provides the guarantees stated in Theorem~\ref{thm2.1}. Note that concavity/convexity is actually not crucial to establishing the final result in Section~\ref{app_f1}. Also note that we need to replace our analysis in Section~\ref{Bias_large} and~\ref{Var_large} for the corresponding large-probability estimator by that in~\cite{nips}.

\subsection{General additive properties}
More generally, the results on $\ell_1$-distance hold for any additive property $F(\vec{p})=\sum_{i\in[k]} f_i(p_i)$ that satisfies the simple condition: $f_i$ is $\mathcal{O}(1)$-Lipschitz, for all $i$. Without loss of generality, assume that all $f_i$'s are $1$-Lipschitz and satisfy $f_i(0)=0$. By the previous derivations, we immediately have $|t''_{na}(z)|\leq 6$, which recovers Property 3 in Section~\ref{app_f1}. Again, concavity/convexity is actually not necessary to establishing the final result in Section~\ref{app_f1}. The proof will be complete if we also recover Property 4 in that section. In other words, we only need to show: $|t''_{na}(z)z|\leq \mathcal{O}(1)$, where
\[
t''_{na}(z)z=e^{-z}\sum_{j=0}^{\infty}r_{na}(j+1)\Paren{\frac{z^{j+1}}{j!}-\frac{z^{j+2}}{(j+1)!}}-e^{-z}zr_{na}(0).
\]
Fix $z\in I_n'$ and treat it as a constant. Let $b_j:=r_{na}(j+1)$ and $a_j:=e^{-z}\Paren{\frac{z^{j+1}}{j!}-\frac{z^{j+2}}{(j+1)!}}$. By Lemma~\ref{boundl}, we have $|a_j|\leq 5, \forall j\geq 1$. Note that there is need to worry about 
the slack term $e^{-z}zr_{na}(0)$ and the first term in the sum which corresponds to $j=0$, since the absolute values of both terms contribute at most $\mathcal{O}(1)$ to the expression for any $z\geq 0$. The key observation is that any consecutive partial sum of sequence $\{b_j\}_{j\geq 1}$ is also bounded by $\mathcal{O}(1)$ in magnitude. Specifically, for any $n_1, n_2\in \mathbb{Z}^+$ satisfying $n_1+2\leq n_2$,
\begin{align*}
\Abs{\sum_{j=n_1}^{n_2} b_j}
&=\Abs{\sum_{j=n_1}^{n_2} r_{na}(j+1)}\\
&=\Abs{\sum_{j=n_1}^{n_2} (g_{na}(j+3)+g_{na}(j+1)-2g_{na}(j+2))}\\
&=\Abs{\sum_{j=n_1+3}^{n_2+3} g_{na}(j)+\sum_{j=n_1+1}^{n_2+1}g_{na}(j)-2\sum_{j=n_1+2}^{n_2+2}g_{na}(j)}\\
&=\Abs{(g_{na}(n_2+3)-g_{na}(n_2+2))+(g_{na}(n_1+1)-g_{na}(n_1+2))}\\
&\leq 2.
\end{align*}
Furthermore, the sequence $\{a_j\}_{j\geq 1}$ can change its monotonicity at most two times. We can prove this claim by considering the sign of $a_j-a_{j-1}$. More concretely,
\begin{align*}
\sign\Paren{a_j-a_{j-1}}
&= \sign\Paren{e^{-z}\Paren{\frac{z^{j+1}}{j!}-\frac{z^{j+2}}{(j+1)!}}-e^{-z}\Paren{\frac{z^{j}}{(j-1)!}-\frac{z^{j+1}}{j!}}}\\
&= \sign\Paren{2(j+1)z-z^2-(j+1)j}\\
&= \sign\Paren{-j^2+j(2z-1)+(2z-z^2)}.
\end{align*}
Since $z$ is fixed, the last expression can change its value at most two times as $j$ increases from $0$ to infinity. The last piece of the proof is the following corollary of the well-known Abel's inequality.
\begin{Lemma}\label{abel_ineq}
Let $\{a'_j\}_{j=1}^m$ be a sequence of real numbers that is either increasing or decreasing, and let $\{b'_j\}_{j=1}^m$ be a sequence of real or complex numbers. Then,
\[
|\sum_{j=1}^m a'_j b'_j|\leq \max_{t=1,\ldots, m} |B'_t| (2|a'_n|+|a'_1|),
\]
where $B'_t:=\sum_{j=1}^t b'_t$.
\end{Lemma}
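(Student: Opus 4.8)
The plan is to prove this as a direct application of summation by parts (Abel's transformation). First I would adopt the convention $B'_0 := 0$, so that $b'_j = B'_j - B'_{j-1}$ for every $j \in \{1,\dots,m\}$, where $B'_t := \sum_{j=1}^{t} b'_j$. Substituting this into $\sum_{j=1}^{m} a'_j b'_j$ and splitting the sum gives $\sum_{j=1}^{m} a'_j B'_j - \sum_{j=1}^{m} a'_j B'_{j-1}$; reindexing the second sum by $j \mapsto j+1$ and discarding the term $a'_1 B'_0 = 0$, I obtain the Abel identity
\[
\sum_{j=1}^{m} a'_j b'_j = a'_m B'_m + \sum_{j=1}^{m-1} (a'_j - a'_{j+1}) B'_j.
\]

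Next I would take absolute values, apply the triangle inequality, and bound each $|B'_j|$ (including $|B'_m|$) by $\max_{t} |B'_t|$, obtaining
\[
\Bigl|\sum_{j=1}^{m} a'_j b'_j\Bigr| \leq \max_{t} |B'_t| \Bigl( |a'_m| + \sum_{j=1}^{m-1} |a'_j - a'_{j+1}| \Bigr).
\]
The final step invokes the monotonicity hypothesis on $\{a'_j\}$: since the differences $a'_j - a'_{j+1}$ all share a common sign, the sum $\sum_{j=1}^{m-1} |a'_j - a'_{j+1}|$ telescopes to $|a'_1 - a'_m| \leq |a'_1| + |a'_m|$. Combining this with the previous display yields the multiplicative factor $|a'_m| + |a'_1| + |a'_m| = 2|a'_m| + |a'_1|$, matching the claimed bound (reading the $|a'_n|$ in the statement as $|a'_m|$).

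I do not expect a genuine obstacle: this is the textbook Abel inequality, and the only two points needing care are the index bookkeeping — ensuring the surviving boundary term is $a'_m B'_m$ rather than $a'_1 B'_0$ — and the observation that it is precisely \emph{monotonicity}, not mere boundedness, that collapses the telescoping sum of absolute differences. The lemma then plugs into the surrounding argument by taking $b'_j = r_{na}(j+1)$, whose partial sums were just shown to be $\mathcal{O}(1)$, and $a'_j = a_j = e^{-z}\bigl(z^{j+1}/j! - z^{j+2}/(j+1)!\bigr)$, which is $\mathcal{O}(1)$ and changes monotonicity at most twice, so the sum $t''_{na}(z)z$ decomposes into $\mathcal{O}(1)$ monotone blocks each controlled by this lemma.
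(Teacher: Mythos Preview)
Your proof is correct and is precisely the standard Abel summation argument; the paper does not prove this lemma at all but simply cites it as ``a corollary of the well-known Abel's inequality,'' so your write-up is exactly what that citation unpacks to. Your observation that $|a'_n|$ should read $|a'_m|$ is also right.
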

By the previous discussion, we can find two indices $j_1$ and $j_2$, such that $\{a_j\}_{j=1}^{j_1}$, $\{a_j\}_{j=j_1+1}^{j_2}$, and $\{a_j\}_{j\geq j_2+1}$ are all monotone subsequences. Then, we apply Lemma~\ref{abel_ineq} to each of them and further bound the resulting quantities by the two inequalities proved above: $\Abs{\sum_{j=n_1}^{n_2} b_j}\leq \mathcal{O}(1)$ and $|a_j|\leq 6,\forall j\geq 1$.  This concludes the proof. Finally, we would like to point out that the above argument actually applies to a much broader class of additive properties beyond the Lipschitz one, which we will not address here for the sake of clarity and simplicity.

\section{A competitive estimator for support size}\label{comp_size}
\subsection{Estimator construction}
Recall that
\[
s(x)=\indic_{x>0}.
\]
Let $\vec{p}$ and $S_{\vec{p}}$ denote an unknown distribution and its support size.
Re-define $a:=|\log^{-2} \epsilon|\cdot\log S_{\vec{p}}$. Let $X^{na}$ be a sample sequence drawn from $\vec{p}$, and $N_i''$ be the number of times symbol $i$ appears. 

The $na$-sample empirical estimator estimates the support size by
\[
\hat{S}^E(X^{na}):=\sum_{i\in[k]} \indic_{N_i''>0}.
\]
Taking expectation, we have
\[
\EE[\hat{S}^E(X^{na})]:=\sum_{i\in[k]} \EE[\indic_{N_i''>0}] = \sum_{i\in [k]} (1-(1-p_i)^{na}).
\]
Following~\cite{mmcover, pnas}, having a length-$\Poi(n)$ sample $X^N$, we denote by $\phi_j$ the number of symbols that appear $j$ times and estimate $\EE[\hat{S}^E(X^{na})]$ by
\[
\hat{S}(X^N):=\sum_{j=1}^{\infty} \phi_j (1-(-(a-1))^j \Pr(Z\geq j)),
\]
where $Z\sim \Poi(r)$ for some parameter $r$. In addition, we define $N_i$ as the number of times symbol $i$ appears. By the property of Poisson sampling, all the $N_i$'s are independent. 

\subsection{Bounding the bias}
The following lemma bounds the bias of $\hat{S}(X^N)$ in estimating $\EE[\hat{S}^E(X^{na})]$.
\begin{Lemma}\label{supp_bias}
For all $a\geq 1$,
\[
{|\EE[\hat{S}(X^N)] - \EE[\hat{S}^E (X^{na})]|}\leq \min\Brace{na, S_{\vec{p}}} e^{-r}+2.
\]
\end{Lemma}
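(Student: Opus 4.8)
The plan is to reduce to a per-symbol computation, peel off a clean ``Poisson target'' from the binomial one, and control the error introduced by the smoothing factor $\Pr(Z\ge j)$ through an integral representation of the Poisson-averaged Taylor remainder of $e^{-x}$. First I would use linearity of expectation together with $\phi_j=\sum_{i\in[k]}\indic_{N_i=j}$; the $j=0$ term may be appended for free since its coefficient $1-(-(a-1))^0\Pr(Z\ge 0)=0$ vanishes, so under Poisson sampling (with $N_i\sim\Poi(np_i)$ independent and $Z\sim\Poi(r)$ independent of all of them)
\[
\EE[\hat{S}(X^N)]=\sum_{i\in[k]}\EE\Br{1-(-(a-1))^{N_i}\,\Pr(Z\ge N_i)} .
\]
The Poisson probability generating function gives $\EE[(-(a-1))^{N_i}]=e^{-anp_i}$, so writing $\Pr(Z\ge N_i)=1-\Pr(Z< N_i)$ splits each summand as
\[
\EE\Br{1-(-(a-1))^{N_i}\Pr(Z\ge N_i)}=\Paren{1-e^{-anp_i}}+R_i,\qquad R_i:=\EE\Br{(-(a-1))^{N_i}\,\Pr(Z<N_i)},
\]
and interchanging the two independent expectations identifies $R_i=e^{-np_i}\,\EE_Z\big[\rho_Z\big((a-1)np_i\big)\big]$, where $\rho_m(x):=e^{-x}-\sum_{j=0}^m\frac{(-x)^j}{j!}$ is the order-$m$ Taylor remainder of $e^{-x}$. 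Since $\EE[\hat{S}^E(X^{na})]=\sum_{i\in[k]}\Paren{1-(1-p_i)^{na}}$, it remains to bound $\Abs{\sum_i\Paren{(1-p_i)^{na}-e^{-anp_i}}+\sum_i R_i}$.

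The heart of the argument is $\Abs{\sum_i R_i}\le\min\{na,S_{\vec{p}}\}\,e^{-r}$. Using the integral form $\rho_m(x)=\tfrac{(-1)^{m+1}}{m!}\int_0^x(x-t)^m e^{-t}\,dt$, averaging over $Z\sim\Poi(r)$ collapses the sum over $m$ into a Bessel function,
\[
\EE_Z[\rho_Z(x)]=-e^{-r}\int_0^x e^{-t}\,J_0\!\big(2\sqrt{r(x-t)}\big)\,dt ,
\]
and since $|J_0|\le 1$ on the reals (e.g.\ from $J_0(u)=\tfrac1\pi\int_0^\pi\cos(u\sin\theta)\,d\theta$), $\Abs{\EE_Z[\rho_Z(x)]}\le e^{-r}\int_0^x e^{-t}\,dt=e^{-r}(1-e^{-x})\le e^{-r}(1\land x)$. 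Hence $\Abs{R_i}\le e^{-np_i}e^{-r}\Paren{1\land(a-1)np_i}\le e^{-r}\Paren{1\land(a-1)np_i}$, and $\sum_i\Paren{1\land(a-1)np_i}\le\min\{(a-1)n,\,S_{\vec{p}}\}\le\min\{na,S_{\vec{p}}\}$. The remaining sum is the elementary binomial-versus-Poisson discrepancy: $(1-p_i)^{na}\le e^{-anp_i}$ and $0\le e^{-anp_i}-(1-p_i)^{na}\le nap_i^2 e^{-anp_i}\le p_i/e$ (treating the at most one $p_i>1/2$ via $e^{-na/2}\le 1$), so $\sum_i\Abs{(1-p_i)^{na}-e^{-anp_i}}\le 2$. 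The triangle inequality then gives the claimed bound.

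I expect the crux to be the Bessel identity, and the clean bound $\Abs{\EE_Z[\rho_Z(x)]}\le e^{-r}(1-e^{-x})$ it unlocks. The individual remainders $\rho_m(x)$ are large and alternate in sign, so any crude termwise estimate of $\sum_m\Pr(Z=m)\rho_m(x)$ blows up; it is exactly the cancellation encoded by $J_0$ — equivalently, by $\sum_m\frac{(-y)^m}{(m!)^2}=J_0(2\sqrt{y})$ together with $|J_0|\le 1$ — that extracts the single factor $e^{-r}$ needed to match the amplification scaling of Theorem~\ref{thm3}. The rest (linearity, the Poisson generating function, the binomial–Poisson comparison) is routine.
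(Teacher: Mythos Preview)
Your proof is correct and follows essentially the same route as the paper's: both replace the binomial target $(1-p_i)^{na}$ by the Poisson target $e^{-anp_i}$ at cost at most $2$, and then bound the remaining smoothed-Good--Toulmin bias via the Bessel identity $\sum_{m\ge 0}\frac{(-y)^m}{(m!)^2}=J_0(2\sqrt{y})$ together with $|J_0|\le 1$. The only difference is packaging---the paper invokes this step as Lemma~7 and Corollary~2 of~\cite{pnas} (and reproves the Bessel bound itself in the support-coverage section), whereas you derive it in place through the integral remainder; the decomposition you use (pulling out the full PGF $\EE[(-(a-1))^{N_i}]=e^{-anp_i}$ first) is a cosmetic rearrangement of the paper's.
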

\begin{proof}
Noting that for any $m\geq 0$ and $p\in[0,1]$,
\[
0\leq e^{-mp}-(1-p)^m\leq 2p,
\]
we have
\begin{align*}
&{|\EE[\hat{S}(X^N)] - \EE[\hat{S}^E(X^{na})]|}\\
&= \Abs{\EE\Br{\sum_{j}^{\infty}\phi_j} -\EE\Br{\sum_{j}^{\infty}\phi_j (-(a-1))^j \Pr(Z\geq j)}-\sum_{i\in [k]} (1-(1-p_i)^{na})}\\
&= \Abs{\sum_{i\in [k]} (1-e^{-np_i})-\EE\Br{\sum_{j}^{\infty}\phi_j (-(a-1))^j \Pr(Z\geq j)}-\sum_{i\in [k]} (1-(1-p_i)^{na})}\\
&\leq \Abs{\sum_{i\in [k]} (-e^{-np_i})-\EE\Br{\sum_{j}^{\infty}\phi_j (-(a-1))^j \Pr(Z\geq j)}-\sum_{i\in [k]} (-e^{-na p_i})}+2\sum_{i\in[k]}p_i \\
&=\Abs{\sum_{i\in [k]} e^{-np_i}(e^{-n(a-1) p_i}-1)-\EE\Br{\sum_{j}^{\infty}\phi_j (-(a-1))^j \Pr(Z\geq j)}}+2 \\
&\leq \min\Brace{na, S_{\vec{p}}} e^{-r}+2,
\end{align*}
where the last step follows by Lemma 7 and Corollary 2 in~\cite{pnas}.
\end{proof}

\subsection{Bounding the mean absolute deviation}
\subsubsection{Bounds for $\boldsymbol{\hat{S}(X^N)}$}
In this section, we analyze the mean absolute deviation of $\hat{S}(X^N)$.
To do this, we need the following two lemmas.
The first lemma bounds the coefficients of this estimator.
\begin{Lemma}~\cite{mmcover}
For $j\geq 1$ and $a\geq 1$,
\[
|1-(-(a-1))^j \Pr(Z\geq j)|\leq 1+e^{r(a-1)}.
\]
\end{Lemma}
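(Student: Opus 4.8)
The plan is to reduce the inequality to a bound on the single quantity $(a-1)^j\,\Pr(Z\ge j)$ and then dispatch that bound by an elementary case analysis. Write $b:=a-1$, so that $b\ge 0$ by hypothesis and $|(-(a-1))^j|=b^j$. The triangle inequality immediately gives $|1-(-(a-1))^j\,\Pr(Z\ge j)|\le 1+b^j\,\Pr(Z\ge j)$, so it suffices to prove $b^j\,\Pr(Z\ge j)\le e^{rb}$.

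For this I would expand the Poisson tail as $\Pr(Z\ge j)=\sum_{m\ge j}e^{-r}r^m/m!$ and split on the size of $b$. If $b\ge 1$, then for every $m\ge j\ge 1$ we have $b^j\le b^m$, hence $b^j\,\Pr(Z\ge j)\le e^{-r}\sum_{m\ge j}(br)^m/m!\le e^{-r}e^{br}\le e^{br}$, which is what is needed (in fact one gets the slightly stronger $e^{r(b-1)}$). If $0\le b<1$, then $b^j\le 1$ and $\Pr(Z\ge j)\le 1$, so $b^j\,\Pr(Z\ge j)\le 1\le e^{rb}$ since $rb\ge 0$. Combining the two regimes yields $b^j\,\Pr(Z\ge j)\le e^{rb}=e^{r(a-1)}$, and the lemma follows.

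The argument is short and the only point requiring care is the case split: pulling the factor $b^j$ inside the sum as $b^m$ is a valid over-estimate only when $b\ge 1$, and for $b<1$ one must fall back on the trivial bounds $b^j\le 1$, $\Pr(Z\ge j)\le 1$ together with $e^{rb}\ge 1$. No concentration inequalities or analytic estimates beyond the series expansion of $e^{br}$ are needed; this is the simplest of the auxiliary lemmas, and one could alternatively just invoke \cite{mmcover} as the source.
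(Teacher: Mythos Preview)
Your proof is correct. The paper itself does not prove this lemma; it simply imports the bound from \cite{mmcover} and uses it. Your elementary argument---triangle inequality plus the case split on $b=a-1\gtrless 1$, with the key observation that for $b\ge 1$ one may replace $b^j$ by $b^m$ inside the Poisson tail sum and dominate by the full exponential series---is exactly the kind of short self-contained verification one would want here, and in the $b\ge 1$ regime it even yields the sharper $e^{r(a-2)}$.
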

The second lemma is the McDiarmid's inequality.
\begin{Lemma}\label{McD_ineq}
Let ${Y_1,\ldots,Y_m}$ be independent random variables taking values in ranges ${R_1,\ldots,R_m}$, 
and let $F:R_1\times\ldots\times R_m\rightarrow C$  with the property that if one freezes all but the ${w^{th}}$ coordinate 
of ${F(y_1,\ldots,y_m)}$ for some ${1 \leq w \leq m}$, then ${F}$ only fluctuates by most ${c_w > 0}$, thus
$ |F(y_1,\ldots,y_{w-1},y_w,\\y_{w+1},\ldots,y_m) - F(y_1,\ldots,y_{w-1},y_w',y_{w+1},\ldots,y_m)| \leq c_w$ 
for all ${y_j \in R_j}$ and ${y'_w \in R_w}$ for ${1 \leq j \leq m}$. Then for any ${\lambda > 0}$, one has
$\displaystyle \Pr( |F(Y) - \EE[F(Y)]| \geq \lambda \sigma ) \leq C \exp(- c \lambda^2 )$
for some absolute constants ${C,c>0}$, where ${\sigma^2 := \sum_{j=1}^m c_j^2}$.
\end{Lemma}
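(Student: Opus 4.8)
The plan is to prove Lemma~\ref{McD_ineq} by the classical Doob-martingale / bounded-differences argument, reducing it to an Azuma--Hoeffding estimate. First I would define the Doob martingale $Z_w:=\EE[F(Y)\mid Y_1,\ldots,Y_w]$ for $w=0,1,\ldots,m$, so that $Z_0=\EE[F(Y)]$ and $Z_m=F(Y)$, and the increments $D_w:=Z_w-Z_{w-1}$ form a martingale difference sequence with $\EE[D_w\mid Y_1,\ldots,Y_{w-1}]=0$ by the tower property.

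The key structural step, and the one I expect to require the most care, is converting the deterministic bounded-differences hypothesis on $F$ into a bound on the \emph{conditional range} of each increment $D_w$; this is precisely where independence of the $Y_j$ is used. Because the $Y_j$ are independent, one can write $\EE[F(Y)\mid Y_1,\ldots,Y_w]=g_w(Y_1,\ldots,Y_w)$, where $g_w(y_1,\ldots,y_w):=\EE[F(y_1,\ldots,y_w,Y_{w+1},\ldots,Y_m)]$ is obtained by averaging out the last $m-w$ coordinates, and $g_w$ inherits from $F$ the property that it fluctuates by at most $c_w$ when only its $w$th argument is changed. Setting the $(Y_1,\ldots,Y_{w-1})$-measurable quantities
\[
L_w:=\inf_{y\in R_w} g_w(Y_1,\ldots,Y_{w-1},y)-Z_{w-1},\qquad U_w:=\sup_{y\in R_w} g_w(Y_1,\ldots,Y_{w-1},y)-Z_{w-1},
\]
we get $L_w\le D_w\le U_w$ and $U_w-L_w\le c_w$. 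Conditioning on $Y_1,\ldots,Y_{w-1}$ and applying Hoeffding's lemma to $D_w$ (which has conditional mean zero and lies in an interval of length at most $c_w$) then yields $\EE[e^{\theta D_w}\mid Y_1,\ldots,Y_{w-1}]\le e^{\theta^2 c_w^2/8}$ for every $\theta\in\mathbb{R}$.

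From here the argument is the standard Chernoff machinery: by the tower property, $\EE[e^{\theta(Z_m-Z_0)}]=\EE\big[\prod_{w=1}^m e^{\theta D_w}\big]\le \prod_{w=1}^m e^{\theta^2 c_w^2/8}=e^{\theta^2\sigma^2/8}$, so Markov's inequality gives $\PP(Z_m-Z_0\ge t)\le e^{-\theta t+\theta^2\sigma^2/8}$, and optimizing at $\theta=4t/\sigma^2$ produces $\PP(Z_m-Z_0\ge t)\le e^{-2t^2/\sigma^2}$. Applying the same bound to $-F$ and taking a union bound over the two tails, then specializing to $t=\lambda\sigma$, yields $\PP(|F(Y)-\EE[F(Y)]|\ge\lambda\sigma)\le 2e^{-2\lambda^2}$, which is the claimed inequality with $C=2$ and $c=2$. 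The only genuine obstacle is the conditional-range step above; the rest is routine, and the constants $C,c$ are not sharp and play no role in how the lemma is later used.
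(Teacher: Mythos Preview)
Your argument is correct and is the standard Doob-martingale/Azuma--Hoeffding proof of McDiarmid's inequality. Note, however, that the paper does not prove this lemma at all: it simply quotes McDiarmid's inequality as a known result and uses it as a black box, so there is no ``paper's own proof'' to compare against; your proposal supplies the classical proof the paper omits.
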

Note that $\hat{S}(X^N)$, when viewed as a function of $N_i$'s with indexes $i$ satisfying $p_i\not=0$, fullfills the property described in Lemma~\ref{McD_ineq}, with $m=S_{\vec{p}}$ and $c_w=2+2e^{r(a-1)}$ for all $1\leq w\leq m$. Therefore, for $\sigma^2 := 4S_{\vec{p}}(1+e^{r(a-1)})^2$,
\[
\Pr( |\hat{S}(X^N) - \EE[\hat{S}(X^N)]| \geq \lambda \sigma ) \leq C \exp(- c \lambda^2 ).
\]
This further implies
\begin{align*}
\EE\Abs{\hat{S}(X^N) - \EE[\hat{S}(X^N)]}
&=\int_{0}^{\infty} \Pr( |\hat{S}(X^N) - \EE[\hat{S}(X^N)]| \geq t)\ dt\\
&=\sigma \int_{0}^{\infty} \Pr( |\hat{S}(X^N) - \EE[\hat{S}(X^N)]| \geq \lambda \sigma)\ d\lambda \\
&\leq C\sigma \int_{0}^{\infty} \exp(- c \lambda^2 )d\lambda \\
&\leq \mathcal{O}(\sqrt{S_{\vec{p}}}(1+e^{r(a-1)})).
\end{align*}
Analogously, viewing $\hat{S}(X^N)$ as a function of $X_i$'s implies 
\[
\EE\Abs{\hat{S}(X^N) - \EE[\hat{S}(X^N)]}\leq \mathcal{O}(\sqrt{n}(1+e^{r(a-1)})).
\]
Hence,
\[
\EE\Abs{\hat{S}(X^N) - \EE[\hat{S}(X^N)]}\leq \mathcal{O}\Paren{\sqrt{\min\Brace{S_{\vec{p}},n}}(1+e^{r(a-1)})}.
\]

\subsubsection{Bounds for $\boldsymbol{\hat{S}^E(X^{na})}$}
The following lemma bounds the variance of $\hat{S}^E(X^{na})$ in terms of $S_{\vec{p}}$.
\begin{Lemma}
For $m\geq 1$ and $X^{m}\sim \vec{p}$,
\[
\Var(\hat{S}^E(X^{m}))\leq \mathcal{O}(S_{\vec{p}}).
\]
\end{Lemma}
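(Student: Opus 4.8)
The plan is to express the empirical support-size estimator as a sum of pairwise negatively correlated indicators and simply discard the (non-positive) cross terms. Write $N_i$ for the number of occurrences of symbol $i$ in $X^m$ and set $Y_i:=\indic_{N_i>0}$, so that $\hat{S}^E(X^m)=\sum_{i:\,p_i>0}Y_i$. Then
\[
\Var\Paren{\hat{S}^E(X^m)}=\sum_{i:\,p_i>0}\Var(Y_i)+\sum_{i\neq j:\,p_i,p_j>0}\mathrm{Cov}(Y_i,Y_j).
\]
Each $Y_i$ is a Bernoulli random variable, hence $\Var(Y_i)\le 1/4$ and the diagonal contributes at most $S_{\vec{p}}/4$. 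So the whole argument reduces to showing $\mathrm{Cov}(Y_i,Y_j)\le 0$ for $i\neq j$.

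For the covariance, I would compute the relevant probabilities by inclusion–exclusion: $\EE[Y_i]=1-(1-p_i)^m$, and, since $\PP(N_i=0,\ N_j=0)=(1-p_i-p_j)^m$,
\[
\EE[Y_iY_j]=\PP(N_i>0,\ N_j>0)=1-(1-p_i)^m-(1-p_j)^m+(1-p_i-p_j)^m,
\]
whereas $\EE[Y_i]\EE[Y_j]=1-(1-p_i)^m-(1-p_j)^m+((1-p_i)(1-p_j))^m$. Subtracting yields the clean identity $\mathrm{Cov}(Y_i,Y_j)=(1-p_i-p_j)^m-((1-p_i)(1-p_j))^m$. Because $\vec{p}$ is a distribution we have $p_i+p_j\le 1$, so both bases lie in $[0,1]$; moreover $(1-p_i)(1-p_j)=1-p_i-p_j+p_ip_j\ge 1-p_i-p_j\ge 0$, and since $t\mapsto t^m$ is monotone on $[0,1]$ this gives $(1-p_i-p_j)^m\le ((1-p_i)(1-p_j))^m$, i.e. $\mathrm{Cov}(Y_i,Y_j)\le 0$. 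Dropping all cross terms then gives $\Var(\hat{S}^E(X^m))\le S_{\vec{p}}/4=\mathcal{O}(S_{\vec{p}})$.

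There is essentially no hard step here: the only thing requiring care is the elementary negative-correlation inequality $1-p_i-p_j\le(1-p_i)(1-p_j)$ together with the monotonicity of $t\mapsto t^m$, which together are just the statement that the events ``symbol $i$ observed'' and ``symbol $j$ observed'' are negatively associated under sampling with replacement. As a consistency check, under Poisson sampling the counts $N_i$ are independent, the $Y_i$ become independent, and the bound follows at once from additivity of variance; the covariance identity above is simply the fixed-sample-size manifestation of that independence. (One could push further to an $\mathcal{O}(\sqrt{S_{\vec{p}}\min\{S_{\vec{p}},m\}})$-type bound as was done for $\hat{S}(X^N)$, but the stated $\mathcal{O}(S_{\vec{p}})$ needs nothing beyond the computation above.)
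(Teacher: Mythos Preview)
Your proof is correct and considerably more direct than the paper's. The paper computes the same second-moment expression you do, but instead of recognising the covariances as nonpositive, it compares the multinomial second moment to its Poisson counterpart via the bound $0\le e^{-mp}-(1-p)^m\le 2p$, controls the resulting discrepancies term by term (picking up additive $\mathcal{O}(S_{\vec p})$ errors), and then invokes McDiarmid's inequality to bound the Poissonised variance. Your argument bypasses all of this by directly establishing the negative-correlation identity $\mathrm{Cov}(Y_i,Y_j)=(1-p_i-p_j)^m-((1-p_i)(1-p_j))^m\le 0$, after which the diagonal bound $\Var(Y_i)\le 1/4$ is enough. The paper's route is more machinery-heavy but perhaps more portable to settings where the explicit covariance formula is unavailable; your route is shorter, gives a better constant, and exposes the combinatorial reason the bound holds.
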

\begin{proof}
Let $N_i$ denote the number of times symbol $i$ appears in $X^m$. By independence, 
\begin{align*}
\Var(\hat{S}^E(X^{m}))
&=\Var\Paren{\sum_{i: p_i>0}\indic_{N_i>0}}\\
&=\EE\Br{\Paren{\sum_{i: p_i>0}\indic_{N_i>0}}^2}-\Paren{\EE\Br{\sum_{i: p_i>0}\indic_{N_i>0}}}^2
\end{align*}
Let $M\sim\Poi(m)$ and $X^M$ be an independent sample of length $M$. Let $N_i'$ denote the number of 
times symbol $i$ appears in $X^M$.
We have
\begin{align*}
&\EE\Br{\Paren{\sum_{i: p_i>0}\indic_{N_i>0}}^2}\\
&=\EE\Br{\sum_{i: p_i>0}\indic_{N_i>0}+\sum_{i\not=j: p_i>0,p_j>0}\indic_{N_i>0}\indic_{N_j>0}}\\
&=\sum_{i: p_i>0}(1-\EE[\indic_{N_i=0}])+\sum_{i\not=j: p_i>0,p_j>0}\EE[(1-\indic_{N_i=0})(1-\indic_{N_j=0})]\\
&=\sum_{i: p_i>0}(1-(1-p_i)^m)+\sum_{i\not=j: p_i>0,p_j>0}\Paren{1-(1-p_i)^m-(1-p_j)^m+(1-p_i-p_j)^m}
\end{align*}
Noting that for any $m\geq 0$ and $p\in[0,1]$,
\[
0\leq e^{-mp}-(1-p)^m\leq 2p,
\]
we have
\[
|(1-(1-p_i)^m) - (1-e^{-mp_i})|\leq 2p_i
\]
and
\[
|(1-(1-p_i)^m-(1-p_j)^m+(1-p_i-p_j)^m)-(1-e^{-mp_i}-e^{-mp_j}+e^{-m(p_i+p_j)})|\leq 4(p_i+p_j).
\]
Therefore, 
\begin{align*}
\Abs{\EE\Br{\Paren{\sum_{i: p_i>0}\indic_{N_i>0}}^2}-\EE\Br{\Paren{\sum_{i: p_i>0}\indic_{N_i'>0}}^2}}
&\leq \sum_{i: p_i>0}2p_i+\sum_{i\not=j: p_i>0,p_j>0}4(p_i+p_j)\\
&\leq 4\sum_{i: p_i>0}\sum_{j: p_j>0}(p_i+p_j)\\
&\leq 8S_{\vec{p}}.
\end{align*}
Similarly, 
\begin{align*}
&\Abs{\Paren{\EE\Br{\sum_{i: p_i>0}\indic_{N_i>0}}}^2-\Paren{\EE\Br{\sum_{i: p_i>0}\indic_{N_i'>0}}}^2}\\
&=\Abs{\EE\Br{\sum_{i: p_i>0}\indic_{N_i>0}}-\EE\Br{\sum_{i: p_i>0}\indic_{N_i'>0}}}\Abs{\EE\Br{\sum_{i: p_i>0}\indic_{N_i>0}}+\EE\Br{\sum_{i: p_i>0}\indic_{N_i'>0}}}\\
&\leq \Abs{\sum_{i: p_i>0}\EE\Br{\indic_{N_i>0}}-\sum_{i: p_i>0}\EE\Br{\indic_{N_i'>0}}}\cdot 2 S_{\vec{p}}\\
&\leq (\sum_{i: p_i>0}2p_i)\cdot 2 S_{\vec{p}}\\
&\leq 4  S_{\vec{p}}.
\end{align*}
Note that changing the value of a particular $N_i'$ changes the value of $\sum_{i: p_i>0}\indic_{N_i'>0}$ by at most $1$. Again, by the McDiarmid's inequality, 
\[
\Var\Paren{\sum_{i: p_i>0}\indic_{N_i'>0}}\leq \mathcal{O}(S_{\vec{p}}).
\]
The triangle inequality combines all the above results and yields
\[
\Var\Paren{\sum_{i: p_i>0}\indic_{N_i>0}}\leq \mathcal{O}(S_{\vec{p}}).
\]
\end{proof}
By Jensen's inequality, the above lemma implies
\[
\EE\Abs{\hat{S}^E(X^{na})-\EE[\hat{S}^E(X^{na})]}\leq \sqrt{\Var(\hat{S}^E(X^{na}))}\leq \mathcal{O}(\sqrt{S_{\vec{p}}}).
\]

\subsection{Proving Theorem 4}
Setting $r=|\log \epsilon|$, we get
\[
e^{r(a-1)}\leq {S_{\vec{p}}^{|\log^{-1} \epsilon|}}
\]
and
\[
e^{-r} = e^{-|\log \epsilon|}=\epsilon.
\]
Hence, by the previous results,
\begin{align*}
\EE\Abs{\hat{S}(X^N) - \hat{S}^E(X^{na})}
&\leq \EE\Abs{\hat{S}(X^N) - \EE[\hat{S}^E(X^{na})]}+\EE\Abs{\EE[\hat{S}^E(X^{na})] - \hat{S}^E(X^{na})}\\
&\leq \mathcal{O}\Paren{S_{\vec{p}}^{|\log^{-1} \epsilon|+\frac{1}{2}}+ S_{\vec{p}}\cdot \varepsilon}.
\end{align*}
Normalize both sides by $S_{\vec{p}}$. Then,
\[
\EE\Br{\Abs{\frac{\hat{S}(X^N)}{S_{\vec{p}}} - \frac{\hat{S}^E(X^{na})}{S_{\vec{p}}}}}
\leq \mathcal{O}\Paren{{S_{\vec{p}}^{|\log^{-1} \epsilon|-\frac12}}+\varepsilon}.
\]
\section{A competitive estimator for support coverage}\label{comp_coverage}
\subsection{Estimator construction}
Recall that 
\[
c(p):=1-(1-p_i)^{m},
\]
where $m$ is a given parameter.
Re-define the amplification parameter as $a:=|\log^{-2} \epsilon|\cdot\log C_{\vec{p}}$. 
Similar to the last section, let $X^{na}$ be an independent length-$na$ sample sequence drawn from $\vec{p}$, and $N_i''$ be the number of times symbol $i$ appears. 

The $na$-sample empirical estimator estimates 
$
C_{\vec{p}}=\sum_{i\in[k]} c(p_i)
$
by the quantity
\[
\hat{C}^E(X^{na}):=\sum_{i\in[k]} c(N_i''/(na))=\sum_{i\in[k]}\Paren{1-\Paren{1-\frac{N_i''}{na}}^m}.
\]
Taking expectation, we get
\[
\EE[\hat{C}^E(X^{na})]=\sum_{i\in[k]}\EE\Br{1-\Paren{1-\frac{N_i''}{na}}^m}.
\]
Let us denote
\[
T(\vec{p}):=\sum_{i\in[k]}\EE\Br{1-e^{-m\frac{N_i''}{na}}}.
\]
Noting that for $t\geq 1$ and $p\in[0,1]$,
\[
|e^{-tp}-(1-p)^t|\leq 2p,
\]
we have
\[
|\EE[\hat{C}^E(X^{na})]-T(\vec{p})]\leq \sum_{i\in[k]}\EE\Br{2\cdot \frac{N_i''}{na}}=2.
\]
Thus, it suffices to estimate $T(\vec{p})$, which satisfies
\begin{align*}
T(\vec{p})
&=\sum_{i\in[k]}\Paren{1-\EE\Br{e^{-m\frac{N_i''}{na}}}}\\
&=\sum_{i\in[k]}\Paren{1-\sum_{j=0}^{na}\binom{na}{j} p_i^j (1-p_i)^{na-j}e^{-m\frac{j}{na}}}\\
&=\sum_{i\in[k]}\Paren{1-\sum_{j=0}^{na}\binom{na}{j} \Paren{p_i\cdot e^{-\frac{m}{na}}}^j (1-p_i)^{na-j}}\\
&=\sum_{i\in[k]}\Paren{1-\Paren{1-p_i(1-e^{-\frac{m}{na}})}^{na}}.
\end{align*}
Let us denote
\[
T_1(\vec{p}):= \sum_{i\in[k]}\Paren{1-\exp\Paren{-na(1-e^{-\frac{m}{na}})p_i}}.
\]
Since $(1-e^{-\frac{m}{na}})p_i\in[0,1]$, 
we have 
\[
|T(\vec{p})-T_1(\vec{p})|\leq \sum_{i\in[k]} 2(1-e^{-\frac{m}{na}})p_i\leq 2.
\]
Define a new amplification parameter $a':=a(1-e^{-\frac{m}{na}})$. We can write $T_1(\vec{p})$ as
\[
T_1(\vec{p}):= \sum_{i\in[k]}\Paren{1-\exp\Paren{-na'p_i}}.
\]
For simplicity, we assume that $m\geq 1.5 n$ and $a>1.8$. Then 
\[
a'=a(1-e^{-\frac{m}{na}})\geq a(1-e^{-\frac{1.5}{a}})>1.
\]
Analogous to case of support size estimation, we can draw a length-$\Poi(n)$ sample sequence $X^N$ and estimate $\EE[\hat{C}^E(X^{na})]$ by
\[
\hat{C}(X^N):=\sum_{j=1}^{\infty} \phi_j (1-(-(a'-1))^j \Pr(\Poi(r)\geq j)).
\]

\subsection{Bounding the bias}
We bound the bias of $\hat{C}(X^N)$ in estimating $\EE[\hat{C}^E(X^{na})]$ as follows.
\begin{align*}
|\EE[\hat{C}(X^N)] - \EE[\hat{C}^E(X^{na})]|
&\leq |\EE[\hat{C}(X^N)] - T_1(\vec{p})|+|T_1(\vec{p})-\EE[\hat{C}^E(X^{na})]|\\
&\leq |\EE[\hat{C}(X^N)] - T_1(\vec{p})|+4\\
&= \left|\sum_{i\in[k]}e^{-np_i}(e^{-n(a'-1)p_i}-1)\right.\\
&\left.-\sum_{i\in[k]} e^{-np_i}\sum_{j=1}^{\infty} \frac{(-(a'-1)np_i)^j}{j!} \Pr(\Poi(r)\geq j)\right|+4\\
&\leq\left|\sum_{i\in[k]}e^{-np_i}\Paren{\sum_{j=1}^{\infty} \frac{(-(a'-1)np_i)^j}{j!} \Pr(\Poi(r)< j)}\right|+4.
\end{align*}
To bound the last sum, we need the following lemma.
\begin{Lemma}
For all $y, r\geq 0$,
\[
\Abs{\sum_{j=1}^{\infty} \frac{(-y)^j}{j!} \Pr(\Poi(r)< j)}\leq e^{-r} (1-e^{-y}).
\]
\end{Lemma}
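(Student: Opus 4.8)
Write $\Pr(\Poi(r)<j)=\sum_{m=0}^{j-1}e^{-r}r^m/m!$ and let $S:=\sum_{j=1}^{\infty}\frac{(-y)^j}{j!}\Pr(\Poi(r)<j)$ denote the quantity to be bounded. The first step is to interchange the two sums; this is legitimate because $\sum_{j\ge1}\sum_{m=0}^{j-1}\frac{y^j}{j!}e^{-r}\frac{r^m}{m!}\le e^{y}\cdot1<\infty$. Grouping by $m$ gives
\[
S=e^{-r}\sum_{m=0}^{\infty}\frac{r^m}{m!}\Paren{e^{-y}-T_m(y)},\qquad T_m(y):=\sum_{j=0}^{m}\frac{(-y)^j}{j!},
\]
since $\sum_{j=m+1}^{\infty}(-y)^j/j!=e^{-y}-T_m(y)$. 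Next I would rewrite the exponential tail via the integral form of Taylor's remainder applied to $f(s)=e^{-s}$,
\[
e^{-y}-T_m(y)=\frac{(-1)^{m+1}}{m!}\int_0^y (y-s)^m e^{-s}\,ds ,
\]
substitute this into the last display, and swap the sum and the integral — now justified because $\sum_m \frac{r^m}{(m!)^2}\int_0^y(y-s)^m e^{-s}\,ds\le y\sum_m (ry)^m/(m!)^2<\infty$. This collapses $S$ to a single integral:
\[
S=-e^{-r}\int_0^y e^{-s}\,\psi\!\Paren{r(y-s)}\,ds,\qquad \psi(u):=\sum_{m=0}^{\infty}\frac{(-u)^m}{(m!)^2}\ \ (u\ge0).
\]

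The crux is the uniform bound $|\psi(u)|\le1$ for all $u\ge0$ (note $\psi(u)=J_0(2\sqrt u)$). I would prove it by an energy argument. The power series for $\psi$ has infinite radius of convergence, so termwise differentiation is valid and a direct coefficient comparison shows $\psi$ solves Bessel's equation $u\psi''+\psi'+\psi=0$, i.e.\ $u\psi''+\psi=-\psi'$. Hence the function $V(u):=u\,(\psi'(u))^2+\psi(u)^2$ satisfies
\[
V'(u)=(\psi'(u))^2+2\psi'(u)\Paren{u\psi''(u)+\psi(u)}=(\psi'(u))^2-2(\psi'(u))^2=-(\psi'(u))^2\le0 ,
\]
and $V(0)=\psi(0)^2=1$, so $\psi(u)^2\le V(u)\le1$. (Alternatively, one may use the Wallis identity $\frac1\pi\int_0^\pi\sin^{2m}\theta\,d\theta=\tfrac{(2m)!}{4^m(m!)^2}$ to get $\psi(u)=\frac1\pi\int_0^\pi\cos(2\sqrt u\,\sin\theta)\,d\theta$, from which $|\psi(u)|\le1$ is immediate.)

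Finally, combining the integral identity for $S$ with $|\psi|\le1$ and $\int_0^y e^{-s}\,ds=1-e^{-y}$ yields
\[
\Abs{\sum_{j=1}^{\infty}\frac{(-y)^j}{j!}\Pr(\Poi(r)<j)}=\Abs{S}\le e^{-r}\int_0^y e^{-s}\,ds=e^{-r}\Paren{1-e^{-y}},
\]
which is the claim. The main obstacle is the bound $|\psi(u)|\le1$; everything else is routine bookkeeping of the two interchanges and the remainder formula (and one checks the degenerate cases $y=0$ and $r=0$ directly — at $r=0$ the inequality is an equality, so the bound is tight).
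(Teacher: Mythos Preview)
Your proof is correct. Both your argument and the paper's reduce the claim to the Bessel bound $|J_0(x)|\le1$, but the routes differ. The paper invokes an external result (Lemma~6 of~\cite{pnas}) to obtain
\[
|S|\le \max_{s\le y}\Abs{\EE_{L\sim\Poi(r)}\!\Br{\frac{(-s)^L}{L!}}}\,(1-e^{-y}),
\]
then identifies the expectation as $e^{-r}J_0(2\sqrt{sr})$ and cites~\cite{mi64} for $|J_0|\le1$. You instead derive the exact integral identity
\[
S=-e^{-r}\!\int_0^y e^{-s}\,J_0\!\Paren{2\sqrt{r(y-s)}}\,ds
\]
via Taylor's remainder and a Fubini swap, and then prove $|J_0|\le1$ from scratch with the energy functional $V(u)=u(\psi')^2+\psi^2$. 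Your version is fully self-contained and gives a strictly sharper intermediate statement (an equality for $S$ rather than an inequality); the paper's version is terser because it outsources both ingredients to references.
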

\begin{proof}
By Lemma 6 of~\cite{pnas},
\begin{align*}
\Abs{\sum_{j=1}^{\infty} \frac{(-y)^j}{j!} \Pr(\Poi(r)< j)}
&\leq \max_{s\leq y}\Abs{\EE_{L\sim \Poi(r)}\Br{\frac{(-s)^L}{L!}}}(1-e^{-y})\\
&= \max_{s\leq y} \Abs{J_0(2\sqrt{sr})} e^{-r}(1-e^{-y})\\
&\leq e^{-r}(1-e^{-y}),
\end{align*}
where $J_0$ is the Bessel function of the first kind and satisfies $|J_0(x)|\leq 1,\forall x\geq 0$~\cite{mi64}.
\end{proof}
By the above lemma, we have
\begin{align*}
|\EE[\hat{C}(X^N)] - \EE[\hat{C}^E(X^{na})]|
&\leq\left|\sum_{i\in[k]}e^{-np_i}\Paren{\sum_{j=1}^{\infty} \frac{(-(a'-1)np_i)^j}{j!} \Pr(\Poi(r)< j)}\right|+4\\
&\leq e^{-r}\sum_{i\in[k]}e^{-np_i}(1-e^{-(a'-1)np_i})+4\\
&\leq e^{-r}\sum_{i\in[k]}(1-e^{-na'p_i})+4.
\end{align*}
Note that $na'=na(1-e^{-\frac{m}{na}})\leq m$. Hence,
\[
{|\EE[\hat{C}(X^N)] - \EE[\hat{C}^E(X^{na})]|}\leq e^{-r}\sum_{i\in[k]} (1-e^{-m p_i}) +4=e^{-r}C_{\vec{p}} +4.
\]

\subsection{Bounding the mean absolute deviation}
\subsubsection{Bounds for $\boldsymbol{\hat{C}(X^N)}$}
Now we bound the mean absolute deviation of $\hat{C}(X^N)$ in terms of $C_{\vec{p}}$. 
By the Jensen's inequality,
\begin{align*}
\EE\Abs{\hat{C}(X^N) - \EE[\hat{C}(X^N)]}
&\leq \sqrt{\Var\Paren{\hat{C}(X^N)}}\\
&=\sqrt{\sum_{i\in k}\Var\Paren{{\sum_{j=1}^{\infty}\indic_{N_i=j}  (1-(-(a'-1))^j \Pr(\Poi(r)\geq j))}}}\\
&\leq \sqrt{\sum_{i\in k}\EE\Br{\Paren{\sum_{j=1}^{\infty}\indic_{N_i=j}  (1-(-(a'-1))^j \Pr(\Poi(r)\geq j))}^2}}\\
&=\sqrt{\sum_{i\in k}\sum_{j=1}^{\infty}\EE\Br{\indic_{N_i=j}} \Paren{ 1-(-(a'-1))^j \Pr(\Poi(r)\geq j)}^2}\\
&\leq (1+e^{r(a'-1)})\sqrt{\sum_{i\in k}(1-e^{-np_i})}
\end{align*}

By our assumption that $m\geq 1.5 n$, 
\begin{align*}
\EE[|\hat{C}(X^N) - \EE[\hat{C}(X^N)]|]
&\leq (1+e^{r(a'-1)})\sqrt{\sum_{i\in k}(1-e^{-np_i})}\\
&\leq (1+e^{r(a'-1)})\sqrt{\sum_{i\in k}(1-e^{-mp_i})}\\
&\leq (1+e^{r(a'-1)})\sqrt{\sum_{i\in k}(1-(1-p_i)^m)}\\
&= (1+e^{r(a'-1)})\sqrt{C_{\vec{p}}}.
\end{align*}
\subsubsection{Bounds for $\boldsymbol{\hat{C}^E(X^{na})}$}
It remains to bound the mean absolute deviation of the $na$-sample empirical estimator. To deal with the dependence among the counts $N_i''$'s, we need the following definition and lemma~\cite{jk83}.
\begin{definition}
Random variables $X_1,\ldots,X_S$ are said to be \emph{negatively associated} if
for any pair of disjoint subsets $A_1,A_2$ of ${1, 2, \ldots, S}$, and
any component-wise increasing functions $f_1, f_2$,
\[
\text{Cov}(f_1(X_i, i\in A_1), f_2(X_j, j\in A_2))\leq 0.
\]
\end{definition}
Next lemma can be used to check whether random variables are negatively associated or not.
\begin{Lemma}\label{neg_associate}
Let $X_1,\ldots,X_S$ be $S$ independent
random variables with log-concave densities. Then
the joint conditional distribution of $X_1,\ldots,X_S$ given 
$\sum_{i=1}^S X_i$ is negatively associated.
\end{Lemma}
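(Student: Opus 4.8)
The plan is to verify directly the covariance inequality in the definition of negative association for the conditional law. Write $g_i$ for the log-concave density of $X_i$ and $T:=\sum_{i=1}^{S}X_i$. Fix disjoint index sets $A_1,A_2\subseteq\{1,\dots,S\}$ and component-wise increasing $f_1,f_2$; abbreviating $f_a=f_a(X_j,j\in A_a)$, it suffices to prove $\EE[f_1\,f_2\mid T=s]\le\EE[f_1\mid T=s]\,\EE[f_2\mid T=s]$ for almost every $s$. Since a function that is constant in some of its arguments is still (weakly) increasing, we may move every index outside $A_1\cup A_2$ into $A_2$ and extend $f_2$ to ignore those coordinates; this alters neither $f_2$ nor the covariance, so we may assume $A_1$ and $A_2$ partition $\{1,\dots,S\}$.

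The first step collapses the two blocks onto the scalar $U:=\sum_{j\in A_1}X_j$. The families $\{X_j\}_{j\in A_1}$ and $\{X_j\}_{j\in A_2}$ are independent, and $U$ and $T-U=\sum_{j\in A_2}X_j$ are functions of one block each; hence conditioning jointly on $\{T=s,\,U=u\}$ keeps the two blocks independent, with $\{X_j\}_{j\in A_1}$ distributed as given $\sum_{A_1}X_j=u$ and $\{X_j\}_{j\in A_2}$ as given $\sum_{A_2}X_j=s-u$. Setting $\psi_1(u):=\EE[f_1\mid\sum_{A_1}X_j=u]$ and $\psi_2(v):=\EE[f_2\mid\sum_{A_2}X_j=v]$ and averaging over $U$ given $T=s$ yields
\[
\EE[f_1\,f_2\mid T=s]=\EE\!\left[\psi_1(U)\,\psi_2(s-U)\,\middle|\,T=s\right],\qquad \EE[f_1\mid T=s]=\EE[\psi_1(U)\mid T=s],\quad \EE[f_2\mid T=s]=\EE[\psi_2(s-U)\mid T=s].
\]

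The second step supplies the monotonicity, and is the only place log-concavity is used: by Efron's monotonicity theorem for independent log-concave summands, $u\mapsto\EE[\phi(Y_1,\dots,Y_m)\mid\sum Y_i=u]$ is nondecreasing for every component-wise increasing $\phi$ whenever $Y_1,\dots,Y_m$ are independent with log-concave densities. Applied to the two blocks this makes $\psi_1$ and $\psi_2$ nondecreasing. (Efron's theorem itself reduces, via Pr\'ekopa's theorem that convolutions of log-concave densities are log-concave, to the two-summand case, where the conditional density of $Y_1$ given $Y_1+Y_2=u$ is proportional to $g_1(x)g_2(u-x)$ and is monotone-likelihood-ratio increasing in $u$ because $a\mapsto g_2(a+\delta)/g_2(a)$ is nonincreasing for $\delta>0$ when $g_2$ is log-concave.) Granting this, under the conditional law given $T=s$ the quantity $\psi_1(U)$ is a nondecreasing and $\psi_2(s-U)$ a nonincreasing function of the single random variable $U$, so their conditional covariance is $\le 0$ by the one-dimensional Chebyshev/Harris correlation inequality --- which is exactly the inequality we need. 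The discrete case (log-concave probability mass functions, as required for the Poisson/multinomial application) is handled identically with sums in place of integrals.

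I expect the monotonicity input to be the main obstacle. It is the only step that genuinely uses log-concavity, and it cannot be bypassed by ``averaging the statement over the block sum,'' because a mixture of negatively associated laws need not be negatively associated. One must establish the likelihood-ratio monotonicity of the conditional law of a block given its partial sum and then propagate it to arbitrary block size using closure of log-concavity under convolution; the remaining ingredients --- the padding reduction, the conditional-independence-of-blocks identity, and the final one-dimensional correlation inequality --- are routine.
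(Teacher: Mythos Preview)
Your argument is correct and is essentially the classical proof: the padding reduction to a partition, the tower identity that collapses each block onto its partial sum, Efron's monotonicity theorem (with Pr\'ekopa's convolution closure supplying the induction step) to get $\psi_1,\psi_2$ nondecreasing, and finally the one-dimensional Chebyshev/Harris inequality applied to $\psi_1(U)$ and $\psi_2(s-U)$ under the conditional law given $T=s$. Each step is sound as you wrote it.

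As for comparison: the paper does not give its own proof of this lemma at all --- it simply quotes the result from Joag-Dev and Proschan~\cite{jk83} and uses it as a black box. So there is nothing to compare on the paper's side. What you have written is in fact a faithful sketch of the original Joag-Dev--Proschan argument (their Theorem~2.8), whose heart is precisely Efron's theorem; you have correctly identified that theorem as the one place log-concavity enters and as the nontrivial ingredient.
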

By Lemma~\ref{neg_associate}, $N_i''$'s are negatively correlated.
Furthermore, note that 
\[
c^*(x):=1-\Paren{1-\frac{x}{na}}^m
\]
is an increasing function, and
\[
\hat{C}^E(X^{na}):=\sum_{i\in[k]} c^*(N_i'').
\]
Hence for any $i,j\in[k]$ such that $i\not=j$,
\[
\text{Cov}(c^*(N_i''),c^*(N_j''))\leq 0.
\]
Therefore,
\begin{align*}
\Var\Paren{\hat{C}^E(X^{na})}
&=\sum_{i\in[k]} \Var(c^*(N_i''))+2\sum_{i,j\in[k], i\not=j}\text{Cov}(c^*(N_i''),c^*(N_j''))\\
&\leq \sum_{i\in[k]} \Var(c^*(N_i''))\\
&\leq \sum_{i\in[k]} \EE[(c^*(N_i''))^2]\\
&=\sum_{i\in[k]} \EE\Br{\sum_{j=0}^{na}\indic_{N_i=j} (C^*(j))^2}\\
&\leq \sum_{i\in[k]} \sum_{j=1}^{na}\EE\Br{\indic_{N_i=j}} \\
&=\sum_{i\in[k]} (1-(1-p_i)^{na}).
\end{align*}
Without loss of generality, we can assume that $a$ is a positive integer. Then,
\begin{align*}
\sum_{i\in[k]} (1-(1-p_i)^{na})
&=\sum_{i\in[k]} (1-(1-p_i)^{n})(\sum_{j=0}^{a-1}(1-p_i)^{nj})\\
&\leq a\sum_{i\in[k]} (1-(1-p_i)^{n})\\
&\leq a \sum_{i\in[k]} (1-(1-p_i)^{m})\\
&=aC_{\vec{p}}.
\end{align*}
The Jensen's inequality implies that
\[
\EE\Abs{\hat{C}^E(X^{na})-\EE[\hat{C}^E(X^{na})]}\leq \sqrt{\Var(\hat{C}^E(X^{na}))}\leq \sqrt{a C_{\vec{p}}}.
\]

\subsection{Proving Theorem 5}
The triangle inequality consolidates the major inequalities above and yields
\[
\EE\Abs{\hat{C}(X^N)-\hat{C}^E(X^{na})}\leq \mathcal{O}\Paren{e^{-r}C_{\vec{p}} +4+\sqrt{a C_{\vec{p}}}+ (1+e^{r(a'-1)})\sqrt{C_{\vec{p}}}}.
\]
Using the fact that $a'<a=|\log^{-2} \epsilon|\cdot\log C_{\vec{p}}$ and set $r=|\log \epsilon|$, we get
\[
\EE\Abs{\hat{C}(X^N)-\hat{C}^E(X^{na})}\leq \mathcal{O}\Paren{\varepsilon C_{\vec{p}}+4+(1+C_{\vec{p}}^{{|\log^{-1} \epsilon|}}+\sqrt{\log C_{\vec{p}}})\sqrt{C_{\vec{p}}}}.
\]
Normalize both sides by $C_{\vec{p}}$. Then,
\[
\EE\Abs{\frac{\hat{C}(X^N)}{C_{\vec{p}}}-\frac{\hat{C}^E(X^{na})}{C_{\vec{p}}}}\leq \mathcal{O}\Paren{C_{\vec{p}}^{|\log^{-1}\epsilon|-\frac12}+\varepsilon}.
\]
\pagebreak
\bibliographystyle{unsrt}
\bibliography{refs}
\end{document}